\title[Tower of fully commutative elements of type $\tilde A$]{Tower of  fully commutative 
elements of type $\tilde A$  \\ 
 and  applications}
\author{Sadek AL HARBAT}
\address{} 
\email{sadikharbat@math.univ-paris-diderot.fr}
\let\mathbb\mathds
\newtheorem{theorem}{Theorem}[section]
\newtheorem{definition}[theorem]{Definition}
\newtheorem{proposition}[theorem]{Proposition}
\newtheorem{lemma}[theorem]{Lemma}
\newtheorem{corollary}[theorem]{Corollary}
\newtheorem{example}[theorem]{Example}
\newtheorem{remark}[theorem]{Remark}
\newenvironment{demo}{\begin{proof}}{\end{proof}}
    \newlength{\myarrowsize} 
    \newlength{\myoldlinewidth}
\tikzstyle{vecArrow} = [thick, decoration={markings,mark=at position
\tikzstyle{innerWhite} = [semithick, white,line width=1.4pt, shorten >= 4.5pt]
	\newcommand\POSITION[3]{%
	\begingroup
	\@tempdim@x=0cm
	\@tempdim@y=\paperheight
	\advance\@tempdim@x#1
	\advance\@tempdim@y-#2
	\put(\LenToUnit{\@tempdim@x},\LenToUnit{\@tempdim@y}){#3}%
	\endgroup
	}
\begin{document}

	\begin{abstract}
	Let $W^c(\tilde A_{n})$ be the set of  fully commutative elements in the affine Coxeter group 
		$W(\tilde A_{n})$ of type $\tilde{A}$. 
		We classify  the elements of $W^c(\tilde A_{n})$  and  give a normal form for them. 
We give a first application of this normal form to fully commutative affine braids.  We then 
use this normal form to define two injections from $W^c(\tilde A_{n-1})$ into $W^c(\tilde A_{n})$ and examine  their properties. We finally consider the tower of affine Temperley-Lieb algebras of type $\tilde{A }$ and use the injections above to prove the injectivity of this tower.    
	\end{abstract}

 	\maketitle
	
	\keywords{Braid groups; affine Coxeter groups; affine Temperley-Lieb algebra; fully commutative elements.}

\section{Introduction}

Let $(W,S)$  be a Coxeter system. We say that w in  W is {\it fully commutative} if any reduced expression for $ w$
can be obtained from any other using only commutation relations among the members of the set $S$.
If $W$ is simply laced then the fully commutative elements of $W$ are those with no $sts$  factor in any 
reduced expression, where $t$ and $s$ are any non-commuting generators.\\

 In this paper we are interested with the affine Coxeter group of type $\tilde A$ which has an infinite set of fully commutative elements, as proved in  \cite{St96} 
 where Stembridge assigns to each fully commutative element $w$   a unique labeled partial order, called the heap of $ w$, whose linear extensions encode the reduced expressions for $w$. The notion of heap  was used frequently as a way to approach affine fully commutative elements, while other notions, for example abacus diagrams, were used in  \cite{HJ}. In this work we only use algebraic methods to deal with them, such as the affine length (see 
Definition~\ref{AL}).\\

In a given  Coxeter group the subset of fully commutative elements is indeed an interesting set  with many
remarkable properties, in particular  relating to Kazhdan-Lusztig polynomials (see for example \cite{BW}) hence relating to $\mu$-coefficients. Moreover, they play the most important role in the $M$-coefficients notion, see \cite{ Green2007}. Under certain conditions they are compatible with the classical Kazhdan-Lusztig cells, in the sense that   the set of fully commutative elements is a union of cells  \cite{Shi,GL}. There is also an intrinsic notion of cell coming from the structure of Temperley-Lieb algebra, those cells are classified in 
  \cite{FG}. \\

This paper is divided into two parts. The first part establishes a classification of
affine fully commutative elements  in type $\tilde A$: they are depicted by  a normal form given in 
Theorem~\ref{FC}. This form is similar to Stembridge's description of fully commutative elements in the Coxeter groups of finite type $ A, B, D$  \cite{St}, although a classification of fully commutative elements of type $A$ was given by Jones in \cite{Jones_1985} before even the official definition of fully commutative elements  in the 90's, see for example  \cite{Fan_1995,Graham}. \\

Classification is interesting in itself, nevertheless, since affine
fully commutative elements in type $\tilde A$ index a basis of the
affine Temperley-Lieb algebra  \cite{Fan_1996}, it is to have consequences on the
structure of the affine Temperley-Lieb algebra, on the  tower of affine Temperley-Lieb algebras  defined in
\cite{Sadek_Thesis}  and on the traces on this algebra. This is
precisely the point of the second part, which is divided into two
applications.\\

The first application is to give a general form for ``fully commutative braids'' as follows: we lift the fully commutative elements to elements having the same expression in the $\tilde{A}$-type braid group  $B(\tilde A_{n}) $, or: {\it fully commutative braids} (in this work we use the same symbols for the generators of the braid group and their images in the corresponding Coxeter group). Regarding $B(\tilde A_{n-1}) $
 as a subgroup of $B(\tilde A_{n}) $ by means of an injective homomorphism $R_n$, we  
  give in Theorem~\ref{2_5_2} a general form for these fully commutative braids in terms of elements of $B(\tilde A_{n-1}) $ and the lift $c_{n}$ of a certain Coxeter element  to $B(\tilde A_{n}) $.   The 
 tower of affine braid groups: 
		\begin{eqnarray}
			 \{1 \} {\longrightarrow} B(\tilde A_{1} )\stackrel{R_{1}}{\longrightarrow} \dotsb 
\stackrel{R_{n-1}}{\longrightarrow}   B(\tilde A_{n-1} ) \stackrel{R_{n}}{\longrightarrow}B(\tilde A_{n} ) \stackrel{R_{n+1}}{\longrightarrow}               \dotsb           \nonumber
		\end{eqnarray}	
gives rise to an analogous injective tower of the group algebras  $K[B(\tilde A_{n} )]$ over an integral domain $K$ 
of characteristic $0$. Let $q$ be an invertible element in $K$. The affine Temperley-Lieb algebra $\widehat{TL}_{n+1}(q)$ is a quotient of the braid group algebra $K[B(\tilde A_{n} )]$ and  we   get    (see Section~\ref{TL}) 
  a tower of affine Temperley-Lieb algebras: 
\begin{eqnarray}
			\widehat{TL}_{1}(q) \stackrel{R_{1}}{\longrightarrow}  \widehat{TL}_{2}(q)  \longrightarrow \dotsb  \stackrel{R_{n-1}}{\longrightarrow}
\widehat{TL}_{n}(q) \stackrel{R_{n}} {\longrightarrow}\widehat{TL}_{n+1}(q) \stackrel{R_{n+1}}\longrightarrow \dotsb  \nonumber 
		\end{eqnarray} 
The images by the quotient map  of  the fully commutative braids in $K[B(\tilde A_{n} )]$ make up a basis of 
$\widehat{TL}_{n+1}(q)$ and the 
  form for fully commutative braids given in Theorem~\ref{2_5_2} 
  is the key to the definition of {\it Markov elements} in  $\widehat{TL}_{n+1}(q)$,  and the key to 
proving  that any trace on the affine Temperley-Lieb algebra is  uniquely defined by its values on Markov elements    \cite[Theorem 4.6]{Sadek_2013_2}.  This in turn leads to the existence and
uniqueness of the affine Markov trace \cite{Sadek_2013_2} and on the other hand  is a step towards Green's conjectures (Property $B$)
\cite{Green2007}. \\

The second application is to prove the faithfulness of the arrows of the
tower of affine Temperley-Lieb algebras (Theorem \ref{F}). This was one of the most
interesting questions since defining this tower in \cite{Sadek_Thesis}. The faithfulness has consequences on the affine knot invariant defined in \cite{Sadek_Thesis}, and on the parabolic-like presentation defined in \cite{Sadek_Thesis} on the level of affine Hecke algebra and recently for  the affine Temperley-Lieb algebra.  \\

The paper is organized as follows:  

In Section 2, we give some general definitions, then we state and prove our  main result, Theorem~\ref{FC}: a normal form for   affine fully commutative elements in type $\tilde A$. This is the affine version of  
Theorem~\ref{1_2}.	 

In Section 3, we define the tower of affine braid groups and establish its faithfulness. 
We then define 
fully commutative braids  and, using our normal form and the fact that the lift $c_{n}$  of a certain Coxeter element  to $B(\tilde A_{n} ) $ acts as a Dynkin automorphism on $B(\tilde A_{n-1} ) $  (Lemma~\ref{Dynkin}), we find a general form for fully commutative braids in Theorem~\ref{2_5_2}.

In Section 4, we show   that  the set $W^c(\tilde A_{n-1})$  of fully commutative elements in the  Coxeter group with $n$ generators of type $\tilde A $  injects into $W^c(\tilde A_{n})$ in two different ways (Theorem~\ref{IJ}).   The existence of these two injections 
$I$ and $J$  depends totally on the normal form of Theorem~\ref{FC}. The intersection of their images is the image of  
$W^c( A_{n-1}) $ on which they coincide.   

In Section 5, we define the tower of affine  Temperley-Lieb algebras coming from the tower of affine braid groups, then we prove in Theorem~\ref{F}   the faithfulness of the arrows of this tower, using in a crucial way the injections $I$ and $J$ of the previous section.

\section{A normal form for affine fully commutative elements}\label{notations}

		 	Let $(W,S)$ be a Coxeter system with associated Dynkin Diagram $\Gamma$. For $s, t$ in $S$ we let $m_{st}$ be the order of $st$ in $W$. Let $w\in W$. We denote by $l(w)$ the length of a (any) reduced expression of $w$. We call 
			{\it support of $w$} and  denote by $Supp(w)$ the subset of $S$ consisting of all generators appearing in a (any) reduced expression of $w$.  We define $\mathscr{L} (w) $ to be the set of $s\in S$ such that $l(sw)<l(w)$, in other words  $s$ appears at the left edge of some reduced expression of $w$. We define $\mathscr{R}(w)$ similarly.\\

				We know that from a given reduced expression of $w$ we can arrive to any other reduced expression only by applying braid relations \cite[\S 1.5 Proposition 5]{Bourbaki_1981}. Among these relations there are commutation relations: those 
				that  correspond  to   generators $t$ and $s$ with $m_{st} = 2$.

	             \begin{definition}
			Elements for which one can pass from any reduced expression to any other one only by applying commutation relations are called {\rm fully commutative elements}. We denote  by $W^{c}$ the set of fully commutative elements in $W$. \\
		    \end{definition}

		The center of our interest in this work is fully commutative elements in $\tilde{A}$-type Coxeter groups. In this case fully commutative elements have some additional elegant properties, in particular:

		\begin{proposition}\label{HIT}{\rm \cite[Lemma 3.1]{HIT}}
			Let $(W,S)$ be  a Coxeter system such that $m_{st}$ is odd or 2 for any $s$, $t$ in $S$. Let $w\in W$. Then $w$ is fully commutative if and only if every $s$ in $Supp(w)$ occurs the same number of times in any reduced expression of $w$. \\
		\end{proposition}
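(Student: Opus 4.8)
The plan is to prove the two implications separately, reducing everything to the effect of a single braid move on the multiset of letters of a reduced word. Throughout I would use Tits' theorem, already recalled in the excerpt via \cite[\S 1.5 Proposition 5]{Bourbaki_1981}, that any two reduced expressions of $w$ are linked by a chain of braid relations; and I would split these moves into the commutation moves (those attached to a pair with $m_{st}=2$) and the remaining long moves (those with $m_{st}\ge 3$, which under our hypothesis that every $m_{st}$ is odd or $2$ are exactly the moves with $m_{st}$ odd).

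For the forward implication I would simply observe that a commutation move $st\mapsto ts$ leaves the multiset of generators of a word unchanged. If $w$ is fully commutative, then by definition any two reduced expressions are linked using commutation moves alone, so the number of occurrences of each $s\in S$ is the same in every reduced expression. This direction is immediate.

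For the converse I would argue by contraposition: assuming $w$ is not fully commutative, I produce two reduced expressions in which some generator occurs a different number of times. Consider the graph on the reduced expressions of $w$ whose edges are the braid moves; by Tits' theorem it is connected, while the subgraph carrying only the commutation-move edges has the commutation classes as its connected components. Since $w$ is not fully commutative there are at least two such components, so connectivity forces an edge joining two distinct components. Such an edge cannot be a commutation move, hence it is a long move, and we obtain two reduced expressions $\mathbf u,\mathbf v$ of $w$ differing by a single replacement of a factor $\underbrace{sts\cdots}_{m_{st}}$ by $\underbrace{tst\cdots}_{m_{st}}$ with $m_{st}\ge 3$; by hypothesis $m_{st}$ is then odd, say $m_{st}=2k+1$.

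The decisive computation is that such an odd braid move alters letter counts: the factor $\underbrace{sts\cdots}_{2k+1}$ contains $k+1$ copies of $s$ and $k$ copies of $t$, whereas $\underbrace{tst\cdots}_{2k+1}$ contains $k$ copies of $s$ and $k+1$ copies of $t$, the rest of the word being untouched, so $\mathbf u$ and $\mathbf v$ disagree in the number of occurrences of $s$. This is exactly the required conclusion, and it is the step where the hypothesis that every $m_{st}$ is odd or $2$ is indispensable: for an even bond $m_{st}\ge 4$ each of $s$ and $t$ would appear $m_{st}/2$ times on both sides, the counts would be preserved, and the criterion would fail to detect the breakdown of full commutativity. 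The only remaining care, which is elementary, is the bookkeeping that a connected graph whose spanning subgraph is disconnected must possess a cross-component edge.
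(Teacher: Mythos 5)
Your proof is correct and complete: the forward direction is indeed immediate, and your contrapositive argument --- Tits' theorem gives connectivity of the graph of reduced words under braid moves, failure of full commutativity gives at least two commutation classes, hence some edge is a long braid move, which under the hypothesis has odd length $2k+1$ and therefore shifts the count of $s$ from $k+1$ to $k$ --- is exactly the standard argument, and you correctly identify where the parity hypothesis is indispensable. Note that the paper itself offers no proof of this proposition, citing it as Lemma 3.1 of \cite{HIT}, so your write-up supplies precisely the argument the reader is expected to reconstruct from that source.
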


			Consider the $A$-type Coxeter group with $n$ generators $W(A_{n})$, with the following Dynkin diagram:\\

			\begin{figure}[ht]
				\centering
				 
				\begin{tikzpicture}

  \filldraw (0,0) circle (2pt);
  \node at (0,-0.5) {$\sigma_{1}$}; 
   
  \draw (0,0) -- (1.5, 0);

  \filldraw (1.5,0) circle (2pt);
  \node at (1.5,-0.5) {$\sigma_{2}$};

  \draw (1.5,0) -- (3, 0);

  \node at (3.5,0) {$\dots$};

  \draw (4,0) -- (5.5, 0);
  
  \filldraw (5.5,0) circle (2pt);
  \node at (5.5,-0.5) {$\sigma_{n-1}$};
 
  \draw (5.5,0) -- (7, 0);
  
  \filldraw (7,0) circle (2pt);
  \node at (7,-0.5) {$\sigma_{n}$};

               \end{tikzpicture}
			 
			\end{figure}					
			We let: 					
$$
\begin{aligned}
\lfloor i,j \rfloor &= \sigma_i \sigma_{i-1} \dots \sigma_j   \   \text{ for } n\ge i\ge j \ge 1    \  \text{ and } \    \lfloor 0,1 \rfloor = 1, 
\\
\lceil  i,j \rceil  &= \sigma_i \sigma_{i+1} \dots \sigma_j   \    \text{ for } 1\le i\le j \le n \    \text{ and }  \   \lceil  n+1, n \rceil  = 1, 
\\
\qquad  \quad    h(i,r) & =   \lfloor i,1 \rfloor \lceil  r,n \rceil 
  \quad   \text{ for } 0\le i < r \le n+1  \text{ and } (i,r)  \ne (0,1),  
  \end{aligned}
$$
 hence  
 $$
\begin{aligned}
 h(i,r)   &=  \sigma_i \sigma_{i-1} \dots \sigma_1 \sigma_r \sigma_{r+1} \dots \sigma_n 
  \quad   \text{ for } 1\le i < r \le n ,  \\ 
   h(0,r )   &=  \lceil  r,n \rceil  \text{ for } 2\le r \le n, \\  
  h(i, n+1) &= \lfloor i,1 \rfloor \text{ for } 1\le i \le n ,  \\  
h(0, n+1) &= 1 .   
\end{aligned}
$$

 \medskip
Considering right classes of $W(A_{n-1})$ in $W(A_{n}) $, Stembridge  has described   canonical reduced words for   elements of 
$W(A_n)$, namely:
$$
  \lceil  m_1, n_1 \rceil     \lceil  m_2, n_2 \rceil \dots   \lceil  m_r, n_r \rceil  
$$
where $n\ge n_1 > \dots > n_r \ge 1$ and $n_i \ge m_i \ge 1$ \cite[p.1288]{St}. He also proved    \cite[Corollary 5.8]{St}  that fully commutative elements are those for which the canonical reduced word satisfies   $m_1 > \dots > m_r$. The set of fully commutative elements is stable under the inverse map; taking inverses we get:

\begin{theorem}\label{1_2}{\rm \cite[Corollary 5.8]{St}}
    $W^c(A_n)$ is the set of elements of the   form: 
 \begin{equation}\label{eq:Stembridge}
 \lfloor l_1, g_1 \rfloor \lfloor l_2, g_2 \rfloor \dots  \lfloor l_s, g_s \rfloor  , 
 \text{ with } 
 \left\{ \begin{matrix}
 1\le l_1 < \dots < l_s \le n ,  \cr   1\le g_1 < \dots < g_s \le n,   \cr
 l_t \ge g_t   \text{ for }  1\le t \le s.  
 \end{matrix}\right.
 \end{equation}
\end{theorem}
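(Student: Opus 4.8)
The plan is to deduce the statement directly from Stembridge's description of canonical reduced words together with his characterization of fully commutative elements, simply by applying the inverse map, exactly as announced in the paragraph preceding the statement. First I would recall the three standard facts about the involution $w \mapsto w^{-1}$ on a Coxeter group: it preserves length, it carries a reduced expression to the reduced expression obtained by reading it backwards, and it preserves full commutativity (because commutation relations are symmetric, a sequence of commutation moves relating two reduced words for $w$ reverses to a sequence relating the reversed words for $w^{-1}$). Hence $W^c(A_n)$ is stable under this involution, and it suffices to invert Stembridge's canonical fully commutative words
$$\lceil m_1, n_1 \rceil \lceil m_2, n_2 \rceil \dots \lceil m_r, n_r \rceil, \qquad n \ge n_1 > \dots > n_r \ge 1, \quad n_i \ge m_i \ge 1, \quad m_1 > \dots > m_r,$$
and check that the results are exactly the elements of the form \eqref{eq:Stembridge}.

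The computational core is the elementary identity $\lceil m, n' \rceil^{-1} = \lfloor n', m \rfloor$, immediate from the definitions: reversing the ascending product $\sigma_m \sigma_{m+1} \dots \sigma_{n'}$ yields the descending product $\sigma_{n'} \sigma_{n'-1} \dots \sigma_m$, since each $\sigma_i$ is an involution. Applying this factor by factor and reversing the order of the factors, I obtain
$$w^{-1} = \lfloor n_r, m_r \rfloor \lfloor n_{r-1}, m_{r-1} \rfloor \dots \lfloor n_1, m_1 \rfloor,$$
and this is automatically a reduced expression, being the reverse of a reduced one. I would then reindex by setting $s = r$ and, for $1 \le t \le s$, $l_t = n_{r+1-t}$ and $g_t = m_{r+1-t}$, which rewrites $w^{-1}$ in the asserted shape $\lfloor l_1, g_1 \rfloor \dots \lfloor l_s, g_s \rfloor$.

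It then remains to translate the three constraints under this order-reversing relabelling: the strict descending chain $n_1 > \dots > n_r$ becomes the strict ascending chain $l_1 < \dots < l_s$; the full commutativity condition $m_1 > \dots > m_r$ becomes $g_1 < \dots < g_s$; and the termwise inequalities $n_i \ge m_i$ become $l_t \ge g_t$; the bounds $1 \le l_t, g_t \le n$ are inherited. Since the relabelling $n_i = l_{r+1-i}$, $m_i = g_{r+1-i}$ is visibly invertible, the inverse map sets up a bijection between Stembridge's family and the family \eqref{eq:Stembridge}, which together with the stability of $W^c(A_n)$ under inversion proves the theorem.

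The content here is entirely Stembridge's; the derivation is bookkeeping, and the only point I would watch carefully — the place where an error could slip in — is the order-reversing reindexing, namely verifying with no off-by-one slip that strict descending chains together with $n_i \ge m_i$ correspond precisely to strict ascending chains together with $l_t \ge g_t$. Once the identity $\lceil m, n' \rceil^{-1} = \lfloor n', m \rfloor$ and the symmetry of commutation moves are in hand, everything else is formal.
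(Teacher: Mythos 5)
Your proposal is correct and follows exactly the route the paper itself takes: the paper derives Theorem~\ref{1_2} from Stembridge's canonical reduced words and his Corollary 5.8 by observing that $W^c(A_n)$ is stable under inversion and taking inverses, which is precisely your argument with the reindexing bookkeeping spelled out. The identity $\lceil m, n' \rceil^{-1} = \lfloor n', m \rfloor$ and the translation of the chains of inequalities are carried out without any off-by-one error, so nothing is missing.
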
 

\medskip

Inspecting the inequalities  above, we see that  the only term in expression (\ref{eq:Stembridge}) 
in which  $ \sigma_{n} $ 
  can   occur  is the $s$-th term. If $ \sigma_{n} $ does occur, then 
$l_s$ must be equal to $n$ and, whether or not $g_s$ is equal to  $n$, $\sigma_{n} $  occurs only once.   Similarly,  if $\sigma_{1} $ does occur in expression (1), 
  then $g_1=1$ and $\sigma_{1} =\sigma_{g_{1}}$  will   appear only once. \\ 
			
 	\begin{definition}
				An element $u$ in $W^{c}(A_{n})$ is called 
				{\rm extremal}  if   both $ \sigma_{n} $ and $ \sigma_{1} $ belong to $Supp(u)$. \\ 
\end{definition}

\begin{lemma}
 An extremal  element different from  $ \lfloor n,1 \rfloor $  can be written as 
$$
h(i,r) \; x  \ \text{ with } \   1\le i < r \le n   \ \text{ and } \ \text{Supp} (x) \subseteq 
\{ \sigma_2, \dots , \sigma_{n-1} \}. 
$$
\end{lemma}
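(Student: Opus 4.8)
The plan is to read the hook $h(i,r)$ directly off Stembridge's canonical form. Write $u$ as in \eqref{eq:Stembridge}, say $u=\lfloor l_1,g_1\rfloor\lfloor l_2,g_2\rfloor\cdots\lfloor l_s,g_s\rfloor$. Since $u$ is extremal, the remarks following Theorem~\ref{1_2} force $g_1=1$ and $l_s=n$, with $\sigma_1$ and $\sigma_n$ each occurring exactly once. The excluded element $\lfloor n,1\rfloor$ is precisely the case $s=1$, where a single descending factor would have to carry both $\sigma_1$ and $\sigma_n$; so I may assume $s\ge 2$, and then the factor $\lfloor l_1,1\rfloor$ holding $\sigma_1$ is distinct from the factor $\lfloor n,g_s\rfloor$ holding $\sigma_n$.

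First I would set $i=l_1$ and split off the first factor, $u=\lfloor i,1\rfloor\,w$ with $w=\lfloor l_2,g_2\rfloor\cdots\lfloor l_s,g_s\rfloor$. As $\sigma_1$ sits only in the first factor, $\sigma_1\notin Supp(w)$, while $\sigma_n$ still occurs (once) in $w$. Everything then reduces to extracting an ascending run as a prefix of $w$, that is, to writing $w=\lceil r,n\rceil\,x$ for a suitable $r$. Granting this, the support condition is free: $Supp(x)\subseteq Supp(w)$ misses $\sigma_1$, and the unique $\sigma_n$ is absorbed by $\lceil r,n\rceil$, so $Supp(x)\subseteq\{\sigma_2,\dots,\sigma_{n-1}\}$ and $u=\lfloor i,1\rfloor\lceil r,n\rceil\,x=h(i,r)\,x$.

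For the ascending prefix I would locate the longest block of consecutive peaks at the top of $w$: let $m\ge 1$ be maximal with $l_s=n,\ l_{s-1}=n-1,\ \dots,\ l_{s-m+1}=n-m+1$, all these factors lying in $w$ (index $\ge 2$), and set $r=l_{s-m+1}=n-m+1$. The claim is that $\sigma_r\in\mathscr{L}(w)$ and that, after removing it, $\sigma_{r+1},\dots,\sigma_n$ can be brought to the front one after another, so that $\lceil r,n\rceil$ is genuinely a reduced prefix of $w$; the run stops at $\sigma_r$ because the next peak down, $l_{s-m}$, falls at least two below $r$ (or there is no further factor in $w$), so no $\sigma_{r-1}$ precedes the leading $\sigma_r$. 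This is the step demanding the most care: it is a bookkeeping of first occurrences, which can be done purely in terms of the indices $l_t,g_t$ — the occurrences of $\sigma_j$ fill the consecutive block of factors $t$ with $g_t\le j\le l_t$ — without appealing to heaps, in keeping with the algebraic approach of this paper.

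The real obstacle is the inequality $i<r$, which is what actually uses the hypotheses. Because the consecutive block defining $r$ is confined to factors of index $\ge 2$ (factor $1$ having been removed), we have $s-m+1\ge 2$, and the strict monotonicity $l_1<l_2<\cdots$ of the peaks then yields $i=l_1<l_{s-m+1}=r$. Together with $1\le g_1\le l_1=i$ and $r=n-m+1\le n$, this gives $1\le i<r\le n$, as required, and the identity $u=h(i,r)\,x$ with $Supp(x)\subseteq\{\sigma_2,\dots,\sigma_{n-1}\}$ follows.
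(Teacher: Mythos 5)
Your proposal is correct and takes essentially the same approach as the paper: both start from Stembridge's canonical form, identify the maximal run of consecutive leading indices $n, n-1, \dots, r$ among the factors after the first one, and extract $\lceil r,n \rceil$ by commutations to reach $u = \lfloor l_1,1 \rfloor \lceil r,n \rceil\, x$ with $x$ supported in $\{\sigma_2,\dots,\sigma_{n-1}\}$. The paper organizes the commutations by pushing $\sigma_n$ (then the growing blocks $\sigma_{n-1}\sigma_n$, etc.) leftward until they abut $\sigma_1$, whereas you pull $\sigma_r, \sigma_{r+1},\dots,\sigma_n$ one by one to the front of the suffix $w$ --- the same bookkeeping in mirror order, yielding the same indices $i=l_1$ and $r$.
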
       			
 \begin{proof} An extremal  element  $u$ has a reduced expression of the form (\ref{eq:Stembridge}) 
				above with $g_1=1$ and $l_s=n$. 
If $s=1$ we have  $ u= \lfloor n,1 \rfloor =  h(n, n+1)$, the only extremal element for which 
the leftmost term in the reduced expression (\ref{eq:Stembridge})  is $\sigma_n$. 

\medskip 

Assume $u \ne  \lfloor n,1 \rfloor$. 
For $n=1$ we have  $\sigma_1=\lfloor 1,1 \rfloor$. For $n=2$ the element $\sigma_1 \sigma_2$ is the only extremal element different from $\lfloor 2,1 \rfloor$  
 and it is equal to $h(1,2)$. 
 Assume  now $n\ge 3$. 

The rightmost term in (\ref{eq:Stembridge})  is 
 $\lfloor n, g_s \rfloor $ with $ g_s > 1 $, so the generators on the right of $\sigma_n$, if any, belong to $\{ \sigma_2, \dots , \sigma_{n-1} \}$. 
 The generator  $\sigma_n$  commutes with any element in 
 $\{ \sigma_2, \dots , \sigma_{n-2} \}$, so,  using the commutation relation
 $\sigma_i \sigma_n = \sigma_n \sigma_i$ for $2 \le i \le n-2$, 
  we can repeatedly 
  push  $\sigma_n$ to the left in  expression (\ref{eq:Stembridge})  above until the element on the left of $\sigma_n$ is  either $\sigma_{n-1}$ or $\sigma_1$. In this process all generators  $\sigma_i$  that are pushed to the right of $\sigma_n$ belong to $\{ \sigma_2, \dots , \sigma_{n-1} \}$. 
  
  If we arrive at a subexpression $a=\sigma_{n-1}\sigma_n$, which happens if and only if $l_{s-1}= n-1$,   
  then again it 
commutes with any element in 
 $\{ \sigma_2, \dots , \sigma_{n-3} \}$ so we can push $a$ to the left until  
 the element on the left of $a$ is  either $\sigma_{n-2}$ or $\sigma_1$. We continue in this way as long as  $l_{s-t}= n-t$    until we reach $\sigma_1$, 
and  obtain the final  expression   
 $ \lfloor l_1,  1 \rfloor  \lceil  n-k, n  \rceil x$, with 
 $  k = \max\{t  \  | \  0\le t < n-1  \text{ and } l_{s-t}= n-t \}$,  as announced. 
  \end{proof}	
  
  \medskip

			Now let $W(\tilde A_{n} ) $ be the affine Coxeter group of $\tilde{A}$-type with $n+1$ generators, with the following Dynkin diagram: 

\medskip
		 
			\begin{figure}[ht]
				\centering
			 
				\begin{tikzpicture}

 \node at (0,0.5) {$\sigma_{1}$}; 
  \filldraw (0,0) circle (2pt);
   
  \draw (0,0) -- (1.5, 0);
  
  \node at (1.5,0.5) {$\sigma_{2}$};
  \filldraw (1.5,0) circle (2pt);

  \draw (1.5,0) -- (5.5, 0);

  \node at (5.5,0.5) {$\sigma_{n-1}$};
  \filldraw (5.5,0) circle (2pt);
 
  \draw (5.5,0) -- (7, 0);
  
  \node at (7,0.5) {$\sigma_{n}$};
  \filldraw (7,0) circle (2pt);

  \draw (7,0) -- (3, -3);
  
  \filldraw (3, -3) circle (2pt);
  \node at (3, -3.5) {$a_{n+1}$};

  \draw (3, -3) -- (0, 0);
  
               \end{tikzpicture}
		 
			\end{figure}

Our notation encapsulates the fact that we view $W( {A_{n}}) $ as the parabolic subgroup of 
$W(\tilde A_{n} ) $ generated by $\sigma_1$, \dots, $\sigma_n$. 
Recalling Proposition~\ref{HIT} we make the following definition. 

				\begin{definition} \label{AL}
				We define the {\rm affine length} of $u$ in $W^{c}(\tilde A_{n})$ to be the number of times $a_{n+1}$ occurs in a (any) reduced expression of $u$. We denote  it by $L(u)$.   
			\end{definition}

\begin{lemma}\label{lemmafull}
Let $w$ be  a fully commutative element in $W(\tilde A_n)$ with $L(w) =m \ge 2$. 
Fix  a reduced expression of $w$ as follows: 
$$
w = u_1 a_{n+1} u_2 a_{n+1} \dots u_m a_{n+1} u_{m+1}  
$$ 
with $u_i$, for $1\le i \le m+1$,  a reduced expression of a fully commutative element in $W^c(A_n)$. 
Then $u_2$, \dots, $ u_m$ are extremal elements and there is a reduced expression of $w$ of the  form: 
 \begin{equation}\label{eq:forme1}
w =  h(i_1, r_1) a_{n+1} h(i_2, r_2) a_{n+1} \dots h(i_m, r_m) a_{n+1} \;  v_{m+1}
 \end{equation} 
where $ v_{m+1} \in W^c(A_n)$,  $0\le i_1 < r_1 \le n +1$, $(i_1, r_1)\ne (0,1)$  and,  for $2\le t\le m$,  we have either  $1\le i_t < r_t \le n $ or 
$(i_t,r_t )=(n,  n+1)$.
 \end{lemma}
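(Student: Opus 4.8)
The plan is to exploit the single structural fact that the affine generator $a_{n+1}$ commutes with every $\sigma_i$ for $2\le i\le n-1$ and satisfies braid relations only with $\sigma_1$ and $\sigma_n$, together with two standing principles: Proposition~\ref{HIT} (so that in $W(\tilde A_n)$ full commutativity is equivalent to the absence of any $sts$-factor), and the fact that a contiguous factor of a reduced expression of a fully commutative element is again fully commutative. The latter follows because splicing any reduced word of the factor back into the reduced word of $w$ produces another word of the same length representing $w$, hence reduced; an $sts$ inside the factor would then yield an $sts$ inside $w$. I will use this repeatedly to guarantee that the elements I extract along the way still lie in $W^c(A_n)$ and that the preceding Lemma applies to them.

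First I would prove that the interior blocks $u_2,\dots,u_m$ are extremal. Fix $2\le i\le m$, so that $u_i$ is flanked on both sides by an occurrence of $a_{n+1}$. Suppose $\sigma_1\notin\mathrm{Supp}(u_i)$; writing $u_i$ in its Stembridge form $\lfloor l_1,g_1\rfloor\cdots\lfloor l_s,g_s\rfloor$, this forces every $g_j\ge 2$, so $\mathrm{Supp}(u_i)\subseteq\{\sigma_2,\dots,\sigma_n\}$. If moreover $\sigma_n\notin\mathrm{Supp}(u_i)$, then $a_{n+1}$ commutes through all of $u_i$ and the two flanking copies collide into $a_{n+1}^2$, contradicting reducedness. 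Otherwise $\sigma_n$ occurs, necessarily once, in the final block $\lfloor n,g_s\rfloor$; pushing the left copy of $a_{n+1}$ rightward through $\lfloor l_1,g_1\rfloor\cdots\lfloor l_{s-1},g_{s-1}\rfloor$ (support in $\{\sigma_2,\dots,\sigma_{n-1}\}$) and the right copy leftward through $\sigma_{n-1}\cdots\sigma_{g_s}$ (again central generators, as $g_s\ge 2$) produces the factor $a_{n+1}\sigma_n a_{n+1}$, an $sts$-factor contradicting Proposition~\ref{HIT}. Hence $\sigma_1\in\mathrm{Supp}(u_i)$, and the diagram automorphism $\sigma_j\mapsto\sigma_{n+1-j}$ (fixing $a_{n+1}$) gives $\sigma_n\in\mathrm{Supp}(u_i)$ by symmetry.

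The second step is a left-to-right gathering procedure. For the first block I would record the (easy) extension of the previous Lemma to an arbitrary $u\in W^c(A_n)$: according to whether $\sigma_1$ and $\sigma_n$ lie in $\mathrm{Supp}(u)$, one writes $u=h(i,r)\,x$ with $0\le i<r\le n+1$, $(i,r)\ne(0,1)$ and $\mathrm{Supp}(x)\subseteq\{\sigma_2,\dots,\sigma_{n-1}\}$ — the extremal case being exactly the Lemma, and the three remaining cases handled by the same pushing argument restricted to whichever of $\sigma_1,\sigma_n$ is present (or to neither, giving $h(0,n+1)=1$). Applying this to $u_1$ gives $u_1=h(i_1,r_1)\,x_1$; since $\mathrm{Supp}(x_1)\subseteq\{\sigma_2,\dots,\sigma_{n-1}\}$, the factor $x_1$ commutes with the first $a_{n+1}$ and is moved to its right. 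I then set $w_2:=x_1u_2$, which is fully commutative (a factor of $w$) and extremal (it contains $\sigma_1,\sigma_n$ because $u_2$ does), so the Lemma yields $w_2=h(i_2,r_2)\,x_2$ with $1\le i_2<r_2\le n$ or $(i_2,r_2)=(n,n+1)$. Iterating — absorb $x_{t-1}$ into $u_t$, apply the Lemma to the extremal element $w_t:=x_{t-1}u_t$, and push the resulting $x_t$ across the next $a_{n+1}$ — walks the central-supported remainders rightward and produces exactly the expression (\ref{eq:forme1}), with $v_{m+1}:=x_m u_{m+1}\in W^c(A_n)$.

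I expect the main obstacle to be the extremality step, and specifically making the two pushing arguments rigorous: one must check that after commuting a copy of $a_{n+1}$ through the appropriate blocks one genuinely lands on the forbidden pattern $a_{n+1}^2$ or $a_{n+1}\sigma_n a_{n+1}$, which rests on controlling precisely which generators the Stembridge form of $u_i$ may contain (only one $\sigma_n$, located in the final block, and no $\sigma_1$ under the standing assumption). The gathering step is then essentially bookkeeping, the only delicate point being the repeated invocation of ``a factor of a fully commutative element is fully commutative'' to keep each $w_t$ inside $W^c(A_n)$ so that the Lemma continues to apply.
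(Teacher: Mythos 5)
Your proof is correct and follows essentially the same route as the paper's: write each block as $h(i,r)\,x$ with $\mathrm{Supp}(x)\subseteq\{\sigma_2,\dots,\sigma_{n-1}\}$ (the extension of the extremal-element lemma to all of $W^c(A_n)$), push the central remainders rightward across $a_{n+1}$ from left to right, and establish extremality of the interior blocks by deriving the forbidden patterns $a_{n+1}a_{n+1}$ or $a_{n+1}\sigma_n a_{n+1}$. The differences are cosmetic: you prove extremality before the gathering step rather than after, you invoke the diagram symmetry $\sigma_j\mapsto\sigma_{n+1-j}$ for the $\sigma_1$-case instead of arguing it directly, and you make explicit (correctly) the fact that a contiguous factor of a reduced expression of a fully commutative element is again fully commutative, which the paper uses implicitly.
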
 
 \begin{proof}
 We first remark that in the same manner as in the previous lemma, we can write any 
 fully commutative element in  $W( A_n)$ as 
 $
h(i,r) \; x  $  with   $0 \le i < r \le n +1$, $(i, r)\ne (0,1)$    and $ \text{Supp} (x) \subseteq 
\{ \sigma_2, \dots , \sigma_{n-1} \}$, in particular  $x$ commutes with $a_{n+1}$. 
Writing in this way $u_1= h(i_1, r_1) x_1$, we can 
push $x_1$ to the right of $a_{n+1}$, obtaining a new term $u_2$ that we in turn  
write $  h(i_2, r_2) x_2$ with $x_2$ commuting to $a_{n+1}$. 
Proceeding from left to right, we   obtain formally form (\ref{eq:forme1}). 

It remains to show that the elements $u_i$, $2\le i \le m$,  are extremal. Indeed,  if 
the support of 
some such $u_i$  was contained in $\{ \sigma_2, \dots , \sigma_{n-1} \}$,
this $u_i$ would commute with $a_{n+1}$  and we would get a reduced expression 
containing $a_{n+1} a_{n+1}$, a contradiction.  Now if some such $u_i$   contained only one of $  \sigma_1,  \sigma_{n }  $, then, using the commutation relations,  $a_{n+1} u_i a_{n+1}$ could be written 
$\dots a_{n+1} \sigma_1 a_{n+1} \dots$ or $\dots a_{n+1} \sigma_n a_{n+1} \dots$, hence would contain a braid, which is impossible in a reduced expression for a fully commutative element.  \end{proof}

\begin{lemma}\label{fullandsigma} 
Let $1\le l \le n$ and  $0\le i   < r  \le n +1$, $(i, r)\ne (0,1)$. Then $w=h(i,r) \; a_{n+1} \; \sigma_l$ is a reduced fully commutative element if and only if one of the following holds: 
\begin{enumerate}
\item $l=r-1 = i $;   
\item $i < l  < r$. 
\end{enumerate}
\end{lemma}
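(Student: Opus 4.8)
The plan is to separate the two requirements — reducedness and full commutativity — and to treat them by quite different means. First I would show that reducedness is automatic. Set $p=h(i,r)\,a_{n+1}$. I claim $\mathscr R(p)=\{a_{n+1}\}$: in the word $\sigma_i\sigma_{i-1}\cdots\sigma_1\,\sigma_r\sigma_{r+1}\cdots\sigma_n\,a_{n+1}$ every factor $\sigma_j$ is followed, further to the right, by a non-commuting generator. Each $\sigma_j$ in the ascending block $\lceil r,n\rceil$ is capped by $\sigma_{j+1}$ (and $\sigma_n$ by $a_{n+1}$), while each $\sigma_j$ in the descending block $\lfloor i,1\rfloor$ is capped by $\sigma_{j-1}$ (and $\sigma_1$ by $a_{n+1}$, since $a_{n+1}$ is adjacent to $\sigma_1$). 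Hence no $\sigma_j$ can be pushed to the right end and $a_{n+1}$ is the only right descent. In particular $\sigma_l\notin\mathscr R(p)$ for every $1\le l\le n$, so $w=p\,\sigma_l$ is reduced whatever $l$ is, and the whole content of the lemma concerns full commutativity.

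For full commutativity I would use Proposition~\ref{HIT}: since every $m_{st}$ in $W(\tilde A_n)$ equals $2$ or $3$, $w$ is fully commutative iff no reduced word of $w$ contains a braid $\sigma\,t\,\sigma$ with $m_{\sigma t}=3$. Now $p$ is itself fully commutative (all its generators occur once, the index sets $\{1,\dots,i\}$ and $\{r,\dots,n\}$ being disjoint), so a braid in a reduced word of $w=p\,\sigma_l$ must involve the appended $\sigma_l$ together with a second occurrence of $\sigma_l$ coming from $p$; consequently, if $\sigma_l\notin Supp(p)$ then $w$ is automatically fully commutative. This disposes of case (2): when $i<l<r$ the generator $\sigma_l$ lies outside $Supp(h(i,r))=\{\sigma_1,\dots,\sigma_i\}\cup\{\sigma_r,\dots,\sigma_n\}$, so $w\in W^c$; the extreme values $l=1$ (forcing $i=0$) and $l=n$ (forcing $r=n+1$) are included here and cause no difficulty.

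When $\sigma_l\in Supp(p)$ the two occurrences of $\sigma_l$ in $w$ decide the matter, and I would read off from the reduced word which neighbours of $\sigma_l$ lie strictly between them: commuting the intervening commuting letters away produces a braid $\sigma_l\,t\,\sigma_l$ precisely when exactly one such neighbour sits between the two copies. Concretely, if $r\le l\le n-1$ only $\sigma_{l+1}$ lies between the $\sigma_l$ of $\lceil r,n\rceil$ and the appended one, giving $\sigma_l\sigma_{l+1}\sigma_l$; if $l=n$ (with $r\le n$) only $a_{n+1}$ lies between, giving $\sigma_n a_{n+1}\sigma_n$; if $2\le l<i$ only $\sigma_{l-1}$ lies between, giving $\sigma_l\sigma_{l-1}\sigma_l$; and if $l=1$ (so $r\ge 3$ outside case (1), whence the ascending block commutes with $\sigma_1$) only $a_{n+1}$ remains, giving $\sigma_1 a_{n+1}\sigma_1$. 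In each situation $w\notin W^c$. The remaining value $l=i$ is the crux: the two copies are the top $\sigma_i$ of $\lfloor i,1\rfloor$ and the appended one, and $\sigma_{i-1}$ always separates them, so everything turns on whether the other neighbour $\sigma_{i+1}$ also separates them — which happens iff $\sigma_{i+1}\in Supp(h(i,r))$, i.e. iff $r=i+1$. If $r=i+1$ both $\sigma_{i-1}$ and $\sigma_{i+1}$ lie between the copies, no braid forms and $w\in W^c$, which is exactly case (1); if $r\ge i+2$ only $\sigma_{i-1}$ separates them and the braid $\sigma_i\sigma_{i-1}\sigma_i$ appears, so $w\notin W^c$.

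The main obstacle I anticipate is not conceptual but a genuinely delicate bookkeeping: making the ``number of neighbours strictly between the two copies'' argument rigorous in the elementary style of the preceding lemmas — explicitly commuting letters past $a_{n+1}$ and along each monotone block — while tracking the degenerate blocks ($i=0$ or $r=n+1$) and the fact that $a_{n+1}$ is a neighbour of both $\sigma_1$ and $\sigma_n$. This last point is exactly what makes the endpoints $l=1$ and $l=n$ behave asymmetrically from the interior values, and is the reason the two allowed regimes (1) and (2) meet precisely at $l=i=r-1$; I would therefore treat $l\in\{1,n\}$ as explicit sub-cases rather than folding them into the generic computation.
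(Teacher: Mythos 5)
Your proposal is correct, and it finds the same braids in the same cases as the paper, but it is organized around two structural observations that the paper never isolates. The paper's proof is a single sweep: it pushes $\sigma_l$ leftward past commuting neighbours and either exhibits a braid ($\sigma_l\sigma_{l+1}\sigma_l$, $\sigma_l\sigma_{l-1}\sigma_l$, $\sigma_n a_{n+1}\sigma_n$ or $\sigma_1 a_{n+1}\sigma_1$) or simply asserts that the remaining cases are reduced fully commutative. You instead (i) prove once and for all that $\mathscr{R}\bigl(h(i,r)\,a_{n+1}\bigr)=\{a_{n+1}\}$, so that $w$ is reduced for \emph{every} $l$ and the lemma becomes purely a full-commutativity statement, and (ii) reduce full commutativity to a counting criterion: since every letter of $h(i,r)\,a_{n+1}$ occurs exactly once, the only possible braid is $\sigma_l t \sigma_l$, which disposes of case (2) instantly (as $\sigma_l\notin Supp(h(i,r))$), while for $\sigma_l\in Supp(h(i,r))$ everything is decided by the number of letters not commuting with $\sigma_l$ lying between its two occurrences --- a quantity invariant under commutations, since such a letter can never cross either occurrence. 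This buys you something the paper does not supply: a genuine proof of the positive cases (1) and (2), which the paper handles by assertion (``indeed $w$ is reduced fully commutative''). The bookkeeping you flag as the main risk does go through, and for the same reason the paper's sweep works: in every negative case the unique separating non-commuting letter ($\sigma_{l+1}$, $\sigma_{l-1}$ or $a_{n+1}$) is adjacent to one of the two occurrences of $\sigma_l$, and all other letters between them commute with $\sigma_l$, so a single monotone sequence of commutations exhibits the braid. One boundary point to repair when writing it up: in your crux case $l=i$, the ``first separator'' is $\sigma_{i-1}$ only when $i>1$, and the ``second separator'' is $\sigma_{i+1}$ only when $i<n$; at the endpoints ($l=i=1$, $r=2$, or $l=i=n$, $r=n+1$) the role of the missing neighbour is played by $a_{n+1}$, so the equivalence ``fully commutative iff $r=i+1$'' survives but not for the literal reason you state --- consistent with your own decision to treat $l\in\{1,n\}$ as explicit sub-cases.
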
 
\begin{proof}
Assume first that $r \le n$ and 
write $w=   \lfloor i,1 \rfloor \lceil  r,n \rceil a_{n+1} \; \sigma_l$. 
Using commutation relations we  push $\sigma_l$ to the left as long as it commutes with its left neighbour.
\begin{itemize}
\item  If 
  $l \ge r$  we will arrive 
at the braid $\sigma_l \sigma_{l+1}\sigma_l$ if $l<n$, 
$\sigma_n a_{n+1}\sigma_n$ if $l=n$: $w$ is not fully commutative. 
\item  Assume    $l < r$. If $i=0$, indeed $w$ is reduced fully commutative. 
We proceed with $i \ge 1$.  
\begin{itemize}
\item  If $l < r-1$ and $l \le i$, again we   get a braid 
  $\sigma_{l }\sigma_{l-1}\sigma_l  $ 
  by pushing $\sigma_l$   leftmost hence $w$ is not fully commutative. 
  \item 
   If $i < l$ indeed $w$ is reduced fully commutative. 
 \item  If $r-1 > 1$, then $\sigma_{r-1}$ commutes with $a_{n+1} $  so for $l=r-1$ we  get    $ \sigma_i \dots \sigma_1 \sigma_r \sigma_{r-1} \sigma_{r+1} \cdots \sigma_n a_{n+1}$ which is  reduced fully commutative. 
 \item Finally if  $r-1=1=l$, then  $\sigma_1$ cannot get past $a_{n+1} $  on the left and  again we have a reduced fully commutative element. 
\end{itemize}
\end{itemize} 
If $r=n+1$ and  either $l < i$ or   $l=i <n$,   the same process produces  a braid $\sigma_{l }\sigma_{l-1}\sigma_l  $ 
or (if $l=1$) $\sigma_{1 }a_{n+1}\sigma_1  $,  hence $w$  is not fully commutative, while, for 
  $i<l$ or $l=i =n$, $w$ is reduced fully commutative. 
\end{proof}

\begin{lemma}\label{twofull}
 Let $h(i,r)$ and $h(i',r')$ be extremal elements different from  $ \lfloor n,1 \rfloor $. Then 
$$ w= h(i,r) \;a_{n+1} \; h(i',r') \;    $$ is a reduced fully commutative element if and only if one of the following holds: 
\begin{enumerate}
\item $i < i' < r' < r$; 
\item $i \le  i'$ and $r'=r=i'+1$.   
\end{enumerate}
\end{lemma}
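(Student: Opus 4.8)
The plan is to work directly with the word
$w=\sigma_i\cdots\sigma_1\,\sigma_r\cdots\sigma_n\,a_{n+1}\,\sigma_{i'}\cdots\sigma_1\,\sigma_{r'}\cdots\sigma_n$, recalling that extremality forces $1\le i<r\le n$ and $1\le i'<r'\le n$, and to settle both reducedness and full commutativity by the simply-laced criterion: $w$ fails to be a reduced fully commutative element exactly when some generator occurring twice in $w$ can be brought, using only the commutations $\sigma_ja_{n+1}=a_{n+1}\sigma_j$ for $2\le j\le n-1$ and $\sigma_j\sigma_k=\sigma_k\sigma_j$ for $|j-k|\ge2$, next to a single non-commuting neighbour (producing a braid $sts$) or next to a second copy of itself (producing a cancellation). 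The structural fact I would exploit throughout is that $a_{n+1}$ acts as a \emph{barrier}: it commutes with $\sigma_2,\dots,\sigma_{n-1}$ but not with $\sigma_1$ or $\sigma_n$.

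For necessity I would first apply Lemma~\ref{fullandsigma} to the prefix $h(i,r)\,a_{n+1}\,\sigma_{i'}$, which is a contiguous subword of a reduced word for the fully commutative $w$ and hence itself reduced and fully commutative; this gives $i<i'<r$ or $i'=r-1=i$, so in every case $i\le i'<r$. This inequality removes all ``crossing'' repeated letters: since $i'<r$ and $i\le i'<r'$, no generator can occur once in the left descending run of one block and once in the right ascending run of the other, so the only letters occurring twice in $w$ lie in $\{\sigma_1,\dots,\sigma_i\}$ or in $\{\sigma_{\max(r,r')},\dots,\sigma_n\}$. I would then track the smallest repeated letter on the right, namely $\sigma_{r'}$ when $r'\ge r$. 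If $r'>r$ then $i'<r\le r'-1$, so the second copy of $\sigma_{r'}$ commutes leftward past the whole descending run $\sigma_{i'}\cdots\sigma_1$, past $a_{n+1}$, and past $\sigma_n\cdots\sigma_{r'+2}$, landing beside $\sigma_{r'+1}$ and creating the braid $\sigma_{r'}\sigma_{r'+1}\sigma_{r'}$ (or $\sigma_na_{n+1}\sigma_n$ when $r'=n$); hence $r'>r$ is impossible. The same slide shows that when $r'=r$ one must have $i'=r-1$, for otherwise $\sigma_{i'}\cdots\sigma_1$ again commutes with $\sigma_r$ and the braid $\sigma_r\sigma_{r+1}\sigma_r$ appears. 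Together with the prefix inequality this leaves exactly $i<i'<r'<r$ (case (1)) or $r'=r=i'+1$ with $i\le i'$ (case (2)).

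For sufficiency I would verify that in cases (1) and (2) no braid and no cancellation can be produced. The repeated left letters $\sigma_1,\dots,\sigma_i$ are harmless because $i\le i'$ makes the two descending runs nest, so between the two copies of each such $\sigma_j$ a non-commuting neighbour always remains trapped (and $a_{n+1}$ itself separates the two copies of $\sigma_1$); in the tight case $i=i'$ of (2) it is the letter $\sigma_r=\sigma_{i+1}$ at the foot of the first ascending run that blocks the slide. For the repeated ascending letters, in case (1) the strict nesting $r'<r$ keeps every repeated $\sigma_j$ ($j\ge r$) preceded in the second block by its neighbour $\sigma_{j-1}$, while in case (2) the single letter $\sigma_{i'}=\sigma_{r-1}$ at the top of the second block's descending run is precisely the barrier that prevents the second copy of $\sigma_r$ from reaching $\sigma_{r+1}$. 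Since these same barriers forbid any $\sigma_j\sigma_j$ or $a_{n+1}a_{n+1}$ collapse, $w$ is reduced, and Proposition~\ref{HIT} then confirms it is fully commutative.

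I expect the main obstacle to be the sufficiency argument in the boundary case (2): there $\sigma_r$ is repeated and sits just one commutation short of the braid $\sigma_r\sigma_{r+1}\sigma_r$, so everything hinges on checking that the lone letter $\sigma_{r-1}=\sigma_{i'}$ genuinely blocks it; more generally, the bookkeeping of exactly which generators are repeated as $(i',r')$ varies relative to $(i,r)$ is the delicate part that must be organised carefully.
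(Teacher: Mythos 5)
Your proposal is correct in substance, and in the necessity direction its skeleton is the same as the paper's: the paper also starts by applying Lemma~\ref{fullandsigma} to the prefix $h(i,r)\,a_{n+1}\,\sigma_{i'}$ (getting $i<i'<r$ or $i'=r-1=i$), and then pins down $r'$ by noting that when $r'>i'+1$ the letter $\sigma_{r'}$ commutes past $\lfloor i',1 \rfloor$, so that Lemma~\ref{fullandsigma} applies to it too; your hand-made leftward slide of the second copy of $\sigma_{r'}$ is that same computation done inline, and your case bookkeeping ($r'>r$ impossible, $r'=r$ forces $i'=r-1$, $r'<r$ gives case (1)) closes necessity correctly. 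Where you genuinely diverge is sufficiency: the paper treats the boundary case $r'=i'+1\le r$ by explicitly pushing the pair $\sigma_{i'}\sigma_{i'+1}$ leftward and inspecting the resulting words, and it merely asserts the generic case $i<i'<r'<r$; you instead handle both cases uniformly by counting the non-commuting letters trapped between the two copies of each repeated generator. That criterion --- a word is a reduced expression of a fully commutative element if and only if every pair of consecutive occurrences of a generator has at least two non-commuting letters between them --- is Stembridge's heap characterization in the simply-laced setting, and granting it, the blockers you identify are the right ones in every case (including the tight case $i=i'$ of (2)), so your treatment actually covers the case the paper leaves unargued.

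However, your concluding sentence is wrong as stated, and it is the only place where sufficiency is actually concluded. ``The barriers forbid any $\sigma_j\sigma_j$ collapse, so $w$ is reduced'' is not a valid inference: in $W(A_2)$ the word $\sigma_1\sigma_2\sigma_1\sigma_2$ admits no commutation at all, hence no reachable $ss$ factor, yet it is not reduced. Likewise Proposition~\ref{HIT} cannot ``confirm'' full commutativity here: it quantifies over \emph{all} reduced expressions of the element, which is precisely what you do not yet control. The correct conclusion comes from the criterion you yourself stated at the outset: it is the simultaneous absence of $ss$ factors and of $sts$ factors throughout the commutation class that yields both reducedness and full commutativity (this is the nontrivial direction of Stembridge's criterion, which deserves at least a citation, though the paper is no more explicit on this point). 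So the repair is to replace the appeal to Proposition~\ref{HIT} by that criterion, and to make explicit that in each case you trap \emph{two} non-commuting letters between consecutive copies --- not just ``a non-commuting neighbour'', since a single one still allows a braid. Your case analysis does exhibit two in every instance (e.g.\ $\sigma_{j-1}$ and $\sigma_{j+1}$, or $a_{n+1}$ and $\sigma_2$, on the left; $\sigma_{j+1}$ or $a_{n+1}$ together with $\sigma_{j-1}$ or $\sigma_{r-1}$ on the right), so the flaw is in the write-up rather than in the underlying argument.
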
 
\begin{proof} We have by assumption 
  $1\le i < r \le n$ and  
	 $ 1\le i' < r' \le n$. We write 
	 $$w=     \lfloor i,1 \rfloor \lceil  r,n \rceil 
	   \;a_{n+1} \; \lfloor i',1 \rfloor \lceil  r',n \rceil .$$ 
	   
	 Assume $w$ is reduced fully commutative. 
From the previous lemma we must have 
 $i'=r-1 =i $ or $i < i' < r$. We know examine $r'$, after  
   noticing  that if $r' > i'+1$, then 
	$ \lfloor i',1 \rfloor  \sigma_{r'} =  \sigma_{r'}  \lfloor i',1 \rfloor $ 
	so   Lemma~\ref{fullandsigma}  imposes $r' = r-1=i $ or $i < r' <r$. \\

	If $i'=r-1  =i$,  	then 
	$r' > i'+1=r$ is impossible  by the previous remark, while  
  $r'=i'+1=r$ produces a reduced fully commutative $w$. 	\\

 If $i < i' < r$ then 
\begin{itemize}
\item  if $r' > i'+1$, the previous remark gives  $r'=r-1$ or $i < r' < r$, whence $i < i' < r' < r$, and under this condition $w$  is  reduced fully commutative; 
\item  if $r'=i'+1\le  r$, we can write  
 $$w=     \lfloor i,1 \rfloor \lceil  r,n \rceil 
	   \;a_{n+1} \;    \sigma_{i'} \sigma_{i'+1} \lfloor i'-1,1 \rfloor \lceil  i'+2,n \rceil  .$$   
We claim that no braid relation  involving $ \sigma_{i'}$ or $ \sigma_{i'+1}$ can occur.
On the right of the product $  \sigma_{i'}\sigma_{i'+1}$ in the expression just above, this  is clear.  This same product  
$  \sigma_{i'}\sigma_{i'+1}$  can be pushed to its left as long as it commutes with its left neighbour. 
	   If $r'=i'+1<   r-1$ we arrive at  
	   $$w=     \lfloor i,1 \rfloor  \sigma_{i'} \sigma_{i'+1}  \lceil  r,n \rceil 
	   \;a_{n+1} \;   \lfloor i'-1,1 \rfloor \lceil  i'+2,n \rceil   $$  
	where we see that our claim holds. 
 If $r'=i'+1=    r-1$	 we arrive at  
	   $$w=     \lfloor i,1 \rfloor  \sigma_{r-2} \sigma_{r } \sigma_{r-1}  \lceil  r+1,n \rceil 
	   \;a_{n+1} \;   \lfloor i'-1,1 \rfloor \lceil  i'+2,n \rceil   $$   
	   and our claim holds again. 
Finally if $r'=i'+1=    r $ 	   we arrive at  
	   $$
\begin{aligned}
w&=     \lfloor i,1 \rfloor    \sigma_{r } \sigma_{r-1} \sigma_{r+1} \sigma_{r }  \lceil  r+2,n \rceil 
	   \;a_{n+1} \;   \lfloor i'-1,1 \rfloor \lceil  i'+2,n \rceil   
\quad \text{if } r<n, 
\\
w&=     \lfloor i,1 \rfloor    \sigma_{n } 
	   \;a_{n+1} \;    \sigma_{n-1 }   \sigma_{n }  \lfloor n-2,1 \rfloor   
\quad \text{if } r=n,  \end{aligned} 
$$  
	   and our claim is proved, hence $w$ is   reduced fully commutative. 
\end{itemize}
\end{proof}

\begin{lemma}\label{right} 
Let $w \in W^c(\tilde A_n)$ with $L(w) = m \ge 2$. Write $w$  as in  {\rm (\ref{eq:forme1})}    and assume that  $h(i_t, r_t) \ne  \lfloor n,1 \rfloor  $ for $2\le t \le m$. 
There exist nonnegative integers $p$ and $k$ satisfying 
$p+k=m$ and an integer $j \in \{1,  \dots , n-1\} $ such that 
  $w$ has  the following form:
   \begin{equation}\label{forme2}
\begin{aligned}
\text{if } p=0 : w &=    (h(j,j+1) \; a_{n+1} )^k  
\;    w_r ,     \\  
\text{if } p>0 :
w &=   h(i_1, r_1) \; a_{n+1} \dots h(i_p, r_p) \;  a_{n+1}    (h(j,j+1) \; a_{n+1} )^k  
\;    w_r ,    \end{aligned}
 \end{equation} 
with  $w_r \in W^c(A_n)$   and, if $p > 0$: 

\begin{itemize}
\item 
 $ \  
1\le i_2 < \dots < i_p < r_p < \dots < r_2 \le n   \  $ and  $  \  r_p-i_p \ge 2$, 

\item $\;$ if $k > 0$:  either $i_p < j < j+1 < r_p$ or  $j+1 =  r_p $.  \\ 
\end{itemize}

The  element $w_r$ can be described as follows: 
\begin{itemize}
\item if $k> 0$:  for some $z  \in \{0,  \dots , n\} $ such that $j+z-1\le n$, we have 

\begin{itemize}
\item  if $z=0$ :  $w_r =1$; 

\item   if $z\ge 1$ :  
$w_r=  \lfloor j, d_1 \rfloor \lfloor j+1, d_2 \rfloor \dots  \lfloor j+z-1, d_{z} \rfloor $ 

with $1\le d_1 < \dots <d_{z} \le n$  and 
$j+c\ge d_{c+1}$ for $0\le c\le z-1$; \\ 
\end{itemize}
\item if $k=0$ (hence $p>0$): for some $t  \in \{0,  \dots , n\} $ we have 
\begin{itemize}
\item if $t=0$ :  $w_r =1$; 

\item if $t\ge 1$:   $ w_r= 
 \lfloor l_1, g_1 \rfloor \lfloor l_2, g_2 \rfloor \dots  \lfloor l_t, g_t \rfloor  
$
with:  
\begin{itemize}
\item  $1\le l_1 < \dots < l_t \le n$, 
\item $1\le g_1 < \dots < g_t \le n$, 
\item $l_i \ge g_i$ for $1\le i\le t$,  
\item $i_p < l_1 < r_p$,  
\item for any $i$, $2 \le i \le t$, such that $l_{i} > l_{i-1}+ 1$ we have 
$l_i < r_p$. 
\end{itemize}
\end{itemize}
\end{itemize}
 \end{lemma}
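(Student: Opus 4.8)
\medskip

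The plan is to start from the canonical form (\ref{eq:forme1}) and read off the combinatorial constraints block by block, and then to analyse the rightmost $A_n$-factor on its own. Since $h(i_t,r_t)\ne \lfloor n,1\rfloor$ for $2\le t\le m$, Lemma~\ref{lemmafull} guarantees that every block $h(i_t,r_t)$ with $2\le t\le m$ is extremal with $1\le i_t<r_t\le n$. Hence each consecutive pair $h(i_t,r_t)\,a_{n+1}\,h(i_{t+1},r_{t+1})$, for $2\le t\le m-1$, is a reduced fully commutative word, and Lemma~\ref{twofull} tells us it satisfies either $i_t<i_{t+1}<r_{t+1}<r_t$, or else $i_t\le i_{t+1}$ and $r_{t+1}=r_t=i_{t+1}+1$.

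First I would prove that the second alternative is absorbing. If some pair satisfies it, then $h(i_{t+1},r_{t+1})=h(j,j+1)$ with $j=i_{t+1}$; applying Lemma~\ref{twofull} to the next pair, the strict-nesting alternative is now impossible because no integer lies strictly between $j$ and $j+1$, so the following block is again $h(j,j+1)$, and by iteration every subsequent block equals $h(j,j+1)$. This lets me define $k$ as the length of the maximal trailing run of blocks equal to a fixed $h(j,j+1)$ and set $p=m-k$; the pairs among blocks $2,\dots,p$ then all fall in the strict-nesting case, giving $i_2<\dots<i_p<r_p<\dots<r_2$. A width-one block occurring among blocks $2,\dots,p$ would, by the same no-integer-between argument, propagate to the right and hence start a trailing run, forcing it into the tail; so block $p$ has width at least two, i.e. $r_p-i_p\ge 2$. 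When $p\ge 2$ the transition from block $p$ to the tail is the pair $(h(i_p,r_p),h(j,j+1))$ of extremal elements, and Lemma~\ref{twofull} gives exactly $i_p<j<j+1<r_p$ or $j+1=r_p$. The (possibly non-extremal) first block is simply carried along; when $p=1$ the transition to the tail is handled by Lemma~\ref{twofull} if that block is extremal, and otherwise by Lemma~\ref{fullandsigma} applied to the leftmost letter of the tail (the cases $i_1=0$ and $r_1=n+1$ reproducing the same two alternatives), and one checks that a width-one first block would force $p=0$.

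Next I would determine the rightmost factor $w_r$, the $A_n$-element sitting to the right of the last $a_{n+1}$, written in Stembridge's form $\lfloor l_1,g_1\rfloor\dots\lfloor l_t,g_t\rfloor$ from Theorem~\ref{1_2}. The key observation is that the only letters of $w_r$ that can be pushed leftward across $a_{n+1}$ to meet the preceding block $h(i_m,r_m)$ are the leading letter $\sigma_{l_1}$ and any letter $\sigma_{l_i}$ opening a descending run after a gap $l_i>l_{i-1}+1$; a run with $l_i=l_{i-1}+1$ is blocked by its predecessor. Applying Lemma~\ref{fullandsigma} to each such letter against the preceding block splits into the two cases. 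If $k>0$ the preceding block is $h(j,j+1)$ of width one, so Lemma~\ref{fullandsigma} permits only $l=i=j$; this forces $l_1=j$ and, since a post-gap letter would exceed $j$ and could not equal $j$, forbids all gaps, so the left indices are consecutive, $l_{c+1}=j+c$, giving the announced staircase form for $w_r$ (the inequalities $j+c\ge d_{c+1}$ being merely the Stembridge condition $l\ge g$). If $k=0$ the preceding block $h(i_p,r_p)$ has width at least two, so the option $l=r-1=i$ is excluded and Lemma~\ref{fullandsigma} yields $i_p<l<r_p$ for every pushable letter; this gives $i_p<l_1<r_p$ and, for each gap index, $l_i<r_p$, which is precisely the stated constraint.

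The main obstacle is this last step. One must argue rigorously that the pushable letters are exactly the leading letter and the post-gap letters, that a pushable letter indeed reaches the preceding block (so that Lemma~\ref{fullandsigma} applies to a genuine subword), and that the non-pushable letters impose no further condition. This requires a careful commutation bookkeeping inside $w_r$ and across $a_{n+1}$, including the edge cases $j=1$ and $r_p-1=1$ where $\sigma_1$ cannot cross $a_{n+1}$ (already isolated in Lemma~\ref{fullandsigma}), and it must be verified that the conditions found are not only necessary but sufficient, i.e. that no braid hidden further to the right of $w_r$ can arise.
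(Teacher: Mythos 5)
Your argument is correct and is essentially the paper's own proof: apply Lemma~\ref{twofull} from left to right, observe that the width-one case $r'=r=i'+1$ is absorbing (producing the tail $(h(j,j+1)a_{n+1})^k$ and the inequalities $i_2<\dots<i_p<r_p<\dots<r_2$, $r_p-i_p\ge 2$), then describe $w_r$ by applying Lemma~\ref{fullandsigma} to the leading letter $\sigma_{l_1}$ and to each post-gap letter $\sigma_{l_i}$ (those with $l_i>l_{i-1}+1$, which admit a reduced expression of $w_r$ beginning with them) against the last block. The ``sufficiency'' worry in your final paragraph is not actually needed: the lemma only asserts that a fully commutative $w$ can be written in the stated form (necessity of the conditions); the converse statement is deferred to Theorem~\ref{FC}.
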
 
\begin{proof}
 We apply Lemma~\ref{twofull}  repeatedly from left to right, i.e. letting $t$ increase in 
 form  (\ref{eq:forme1})  (Lemma~\ref{lemmafull}). Case  (1)   forces the inequality  $i < i' < r' < r$, it cannot happen more than $[n/2]$ times.   Case (2) can only be followed by $h(i',i'+1)$ again. We thus get (\ref {forme2}). 
 
To determine $w_r$ we use form (\ref{eq:Stembridge}) (Theorem~\ref{1_2}) and we repeatedly apply Lemma~\ref {fullandsigma}, remembering that if $l_{i} > l_{i-1}+ 1$, there is a reduced expression for $w_r$ that begins with $\sigma_{l_i}$. 
If $k> 0$  any reduced expression for $w_r$ has to begin with $\sigma_j$ on the left, we thus obtain a simple condition.  
  \end{proof}
  
  \medskip 

We can now start the classification. Let  
$w \in W(\tilde A_n)$ with $L(w) = m \ge 1$, written  as in (\ref{eq:forme1}) in Lemma~\ref{lemmafull}. 
We discuss according to the first term  $h(i_1, r_1)$ which can have one, and only one, of the following forms: 
\begin{enumerate}
\item  $i_1=n, r_1=n+1$  (extremal, equal to  $ \lfloor n,1 \rfloor $);
\item   $1\le i_1< r_1 \le n$ (extremal, different from  $ \lfloor n,1 \rfloor $);
\item    $i_1 = 0$ and $2 \le r_1 \le n$ ($\sigma_n$ appears but not $\sigma_1$); 
\item  $1 \le i_1 \le n-1$ and $r_1= n+1$ ($\sigma_1$ appears but not $\sigma_n$); 
\item   $i_1=0$ and $r_1=n+1$.  
\end{enumerate}
We now examine each case. 
\begin{enumerate}
\item 
By Lemma~\ref{fullandsigma}, 
after $h(n, n+1)=  \sigma_n \sigma_{n-1} \dots \sigma_1$ there is only one choice  
for $h(i_2, r_2)$, namely  $h(n, n+1)$ itself, and this repeats until we reach the rightmost  term. This term must be $1$ or some  $\lfloor n, i \rfloor $.  So $w$ has the form: 
$$  (h(n,n+1) \; a_{n+1} )^k  
\;    w_r  \text{ with  }  w_r=1\text{ or  }w_r= \lfloor n, i \rfloor .$$
\item
By Lemma~\ref{fullandsigma} all elements $h(i_t, r_t)$ must also be extremal and different from  $ \lfloor n,1 \rfloor $, hence the previous discussion applies and we get the forms in 
Lemma~\ref{right}, either with $p=0$
 (if $r_1= i_1 +1$), or with $p>0$. We only have to extend  the condition of the lemma to $(i_1, r_1)$, that is: 
 
$ \  
1\le i_1 <\dots < i_p < r_p < \dots < r_1 \le n  \  $,  $  \  r_p-i_p \ge 2$ and, 
  if $k > 0$,   either $i_p < j < j+1 < r_p$ or  $j+1 =  r_p $.
 
\item Assume first   $m\ge 2$. 
 By Lemma~\ref{fullandsigma} again we must have $i_2 < r_1 $ hence, as 
  before, all elements $h(i_t, r_t)$ in form   (\ref{eq:forme1}) must   be extremal and different from  $ \lfloor n,1 \rfloor $. Furthermore, if $r_2-i_2>1$, 
then $\sigma_{r_2} $ can be pushed to the left of  $h(i_2, r_2)$ so 
Lemma~\ref{fullandsigma} implies   $r_2 < r_1$.   
We obtain the possible forms from Lemma~\ref{right} 
with $p>0$, with the condition: 

$ \  
0= i_1 <  i_2 < \dots < i_p < r_p < \dots< r_2  < r_1 \le n  \  $,  $  \  r_p-i_p \ge 2$ 

and, 
  if $k > 0$,   either $i_p < j < j+1 < r_p$ or  $j+1 =  r_p $.

 If $m=1$, we have to describe the rightmost term $v_2$ in expression  (\ref{eq:forme1}). 
 Writing $v_2$ in   form (\ref{eq:Stembridge}) as in the proof of Lemma \ref{right}, 
 we find exactly the same expression as in the case $k=0$ for $w_r$ in this lemma, with $p=1$.   
 
\item If $m\ge 2$, 
   Lemma \ref{fullandsigma} gives $i_2 > i_1$. 
If $i_2=n$, from   the same lemma we obtain 
$$
w= h(i_1, n+1) a_{n+1} (h(n, n+1) a_{n+1} )^k    w_r 
$$
with $k>0$ and $w_r =1$ or $w_r=  \lfloor n, i \rfloor $ for some $i$, $1\le i \le n$. 

If $i_2 < n $ we are back to the case of extremal elements different from  $ \lfloor n,1 \rfloor $
and we obtain the possible forms from Lemma ~\ref{right} 
with $p>0$, with the condition: 

$ \  
1\le  i_1 <  i_2 < \dots < i_p < r_p < \dots < r_2 <  r_1= n  +1 \  $,  $  \  r_p-i_p \ge 2$ and, 
  if $k > 0$,   either $i_p < j < j+1 < r_p$ or  $j+1 =  r_p $. 
  
  If $m=1$, Lemma~\ref{right}  provides, for the rightmost term, the same expression as in the case $k=0$ for $w_r$ in this lemma, with $p=1$.

\item Here  if $m\ge 2$, in case $i_2 <n$, then in the notation of Lemma~\ref{right} we have $h(i_1, r_1)= 1$,  and in case $i_2=n$, we have a form similar to case (1),  
 namely 
$$ a_{n+1} (h(n,n+1) \; a_{n+1} )^k  
\;    w_r  \text{ with  }  w_r=1\text{ or  }w_r= \lfloor n, i \rfloor .$$ 
  If $m=1$ the rightmost term can be any fully commutative element $w_r$ in 
  $W  (A_n)$. Actually the description of $w_r$ given in   Lemma~\ref{right}, with $k=0$, $p=1$, 
  $i_1=0$ and $r_1=n+1$,  applies here as well.  \\
    
\end{enumerate}

We subsume this discussion in the following theorem: \\

\begin{theorem}\label{FC} 
Let $w \in W^c(\tilde A_n)$ with $L(w)   \ge 1$.  
Then $w$ can be written in a unique way as a reduced word of    the following form, for nonnegative integers $p$ and $k$ and for 
$j \in \{1,  \dots , n\} $:  
 \begin{equation}\label{formefinale}
\begin{aligned}
\text{if } p=0 : w &=    (h(j,j+1) \; a_{n+1} )^k  
\;    w_r  \ (\text{with } k>0),  \\  
\text{if } p>0 :
w &=   h(i_1, r_1) \; a_{n+1} \dots h(i_p, r_p) \;  a_{n+1}    (h(j,j+1) \; a_{n+1} )^k  
\;    w_r ,    \end{aligned}
 \end{equation} 
with  $w_r \in W^c(A_n)$   and, if $p > 0$: 

\begin{itemize}
\item 
 $ \  
0\le i_1< \dots < i_p < r_p < \dots  < r_1 \le n +1   \  $ and  $  \  r_p-i_p \ge 2$, 

\item if $k > 0$:  either $i_p < j < j+1 < r_p$ or  $j+1 =  r_p $.  
\end{itemize}
In particular  we have  $p \le [\frac{n+1}{2}] $. \\ 

The   element $w_r$ can be described as follows: 
\begin{itemize}
\item If $k> 0$:  for some $z  \in \{0,  \dots , n\} $ such that $j+z-1\le n$, we have 

\begin{itemize}
\item  if $z=0$ :  $w_r =1$; 

\item   if $z\ge 1$ :  
$w_r=  \lfloor j, d_1 \rfloor \lfloor j+1, d_2 \rfloor \dots  \lfloor j+z-1, d_{z} \rfloor $ 

with $1\le d_1 < \dots <d_{z} \le n$  and 
$j+c\ge d_{c+1}$ for $0\le c\le z-1$. \\ 
\end{itemize}
\item If $k=0$ (hence $p>0$): for some $t  \in \{0,  \dots , n\} $ we have 
\begin{itemize}
\item if $t=0$ :  $w_r =1$; 

\item if $t\ge 1$:   $ w_r= 
 \lfloor l_1, g_1 \rfloor \lfloor l_2, g_2 \rfloor \dots  \lfloor l_t, g_t \rfloor  
$
with:  
\begin{itemize}
\item  $1\le l_1 < \dots < l_t \le n$, 
\item $1\le g_1 < \dots < g_t \le n$, 
\item $l_i \ge g_i$ for $1\le i\le t$,  
\item $i_p < l_1 < r_p$; 
\item for any $i$, $2 \le i \le t$, such that $l_{i} > l_{i-1}+ 1$ we have 
$l_i < r_p$. \\
\end{itemize}
\end{itemize}
\end{itemize}
Conversely, any word written as  above is reduced and fully commutative. \\  
\end{theorem}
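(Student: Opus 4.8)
The plan is to treat the three assertions separately: existence of a word of the stated form, uniqueness of that word, and the converse that every such word is reduced and fully commutative.

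First I would dispose of existence, which is essentially the case analysis carried out just above the statement. For $m=L(w)\ge 2$, Lemma~\ref{lemmafull} produces a reduced expression of the shape (\ref{eq:forme1}) in which the interior blocks are extremal, Lemma~\ref{right} reorganizes the factors $h(i_t,r_t)$ into a strictly nested prefix followed by a power of a single $h(j,j+1)$ and a tail $w_r\in W^c(A_n)$, and the five-case discussion according to the type of the leftmost block $h(i_1,r_1)$ extends the inequalities to include $(i_1,r_1)$ and yields exactly (\ref{formefinale}); the case $m=1$ is read off directly from the same discussion with $p=1$, $k=0$. The bound $p\le[\frac{n+1}{2}]$ I would obtain by a counting argument: the prefix indices give $2p$ pairwise distinct integers $i_1<\dots<i_p<r_p<\dots<r_1$ in $\{0,\dots,n+1\}$, and the extra condition $r_p-i_p\ge 2$ forces at least one further value strictly between $i_p$ and $r_p$, so $2p+1\le n+2$.

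For the converse I would argue that a word of the form (\ref{formefinale}) is reduced and fully commutative by checking fullness locally at each interface and invoking the $A_n$ theory for the homogeneous pieces. The inequalities imposed in the statement are precisely the hypotheses of Lemmas~\ref{fullandsigma} and~\ref{twofull}: every factor $h(i_t,r_t)\,a_{n+1}\,h(i_{t+1},r_{t+1})$ and every factor $h\,a_{n+1}\,\sigma_l$ at the junction with $w_r$ is reduced and fully commutative, while each $h(i_t,r_t)$ and the tail $w_r$ are fully commutative in $W(A_n)$ by Theorem~\ref{1_2}. Since the group is simply laced, fullness is equivalent to the absence of a factor $\sigma t\sigma$ (with $\sigma,t$ non-commuting) in some reduced expression; any such putative braid involves three letters and, because $a_{n+1}$ fails to commute only with $\sigma_1$ and $\sigma_n$, cannot straddle more than two adjacent blocks, so it would already appear in one of the local patterns ruled out above. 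Reducedness then follows from the same local statements, which assert reduced factors, since no two equal letters can be brought adjacent across an interface.

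The real work is uniqueness, which I expect to be the main obstacle. Here I would use that $w$ is fully commutative, so any two reduced expressions differ only by commutations of commuting pairs; in particular $m=L(w)$ and, by Proposition~\ref{HIT}, the number of occurrences of each generator are intrinsic to $w$. The key structural remark is that the relative order of the $m$ copies of $a_{n+1}$ is preserved under every commutation move: consecutive copies are separated by an extremal element (Lemma~\ref{lemmafull}) which contains both $\sigma_1$ and $\sigma_n$, and $a_{n+1}$ commutes with neither, so no commutation can move one copy of $a_{n+1}$ past another. This fixed ordering, together with the normalization convention underlying the existence proof — pushing every letter of $\{\sigma_2,\dots,\sigma_{n-1}\}$ as far to the right as the non-commutation relations permit — pins down each block $h(i_t,r_t)$, the repeated index $j$, the exponent $k=m-p$, and the canonical $A_n$-word $w_r$. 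I would make this precise by induction on $m$, peeling off the leftmost factor $h(i_1,r_1)\,a_{n+1}$, whose value is recovered from the left descent set of $w$ and the position of the first copy of $a_{n+1}$, after which the inductive hypothesis applies to the remainder. The delicate point, and where the bookkeeping is heaviest, is keeping the five boundary types of $h(i_1,r_1)$ consistent throughout the induction — in particular the degenerate values $i_1=0$ and $r_1=n+1$ and the separation of the nested $p$-part from the repeated $k$-part — using the uniqueness of the Stembridge canonical form from Theorem~\ref{1_2} to finish off the tail.
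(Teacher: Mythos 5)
Your proposal is correct and follows essentially the same route as the paper: existence is exactly the paper's own derivation (Lemma~\ref{lemmafull}, Lemma~\ref{right}, and the five-case discussion of $h(i_1,r_1)$ preceding the theorem), and the converse rests, as in the paper, on the ``if and only if'' content of Lemmas~\ref{fullandsigma} and~\ref{twofull} applied at each interface. The two points you develop beyond the paper's text --- the counting argument $2p+1\le n+2$ for the bound $p\le[\frac{n+1}{2}]$, and the uniqueness induction based on the intrinsic affine length (Proposition~\ref{HIT}) and the deterministic right-pushing normalization --- are elaborations of steps the paper leaves implicit, not a different method.
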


\begin{example}				Let $w$ be in $W^{c}(\tilde A_{2} )$. Then there exists $k \ge 0$  such that  $w$ has one and only one of the following forms:\\

					\begin{figure}[ht]
				\centering
			 
\begin{tikzpicture} 
				\begin{scope}[xscale = 1]

  \node[left =-13pt] at (-1.5,1)  {$1$};
  \node[left =-13pt] at (-1.5,0)  {$a_{3}$};
  \node[left =-13pt] at (-1.5,-1) {$\sigma_{1} a_{3}$};

	\draw (-1,1)  -- (0,0);
	\draw (-1,0)  -- (0,0);
	\draw (-1,-1) -- (0,0);
	
  \node at (1,0)  {$(\sigma_{2}\sigma_{1}a_{3})^{k}$};

	\draw (2,0)  -- (3,-1);
	\draw (2,0)  -- (3,0);
	\draw (2,0)  -- (3,1);

  \node[right =-13pt] at (3.5,1)  {$1$};
  \node[right =-13pt] at (3.5,0)  {$\sigma_{2}$};
  \node[right =-13pt] at (3.5,-1) {$\sigma_{2} \sigma_{1}$};

\end{scope}
%=========================================
\begin{scope}[xscale = 1, xshift = 8cm]

  \node[left =-13pt] at (-1.5,1)  {$1$};
  \node[left =-13pt] at (-1.5,0)  {$a_{3}$};
  \node[left =-13pt] at (-1.5,-1) {$\sigma_{2} a_{3}$};

	\draw (-1,1)  -- (0,0);
	\draw (-1,0)  -- (0,0);
	\draw (-1,-1) -- (0,0);
	
  \node at (1,0)  {$(\sigma_{1}\sigma_{2}a_{3})^{k}$};

	\draw (2,0)  -- (3,-1);
	\draw (2,0)  -- (3,0);
	\draw (2,0)  -- (3,1);

  \node[right =-13pt] at (3.5,1)  {$1$};
  \node[right =-13pt] at (3.5,0)  {$\sigma_{1}$};
  \node[right =-13pt] at (3.5,-1) {$\sigma_{1} \sigma_{2}$};

\end{scope}
   \end{tikzpicture}
			\end{figure}
			\end{example}

	\medskip		
	
We finish this section with  simple remarks. Firstly, the   affine length of a fully commutative element $w$ written in form  (\ref{formefinale}) is $p + k$. Secondly, the 
  family of elements corresponding to the case $k>0$ and $j=n$ is particularly simple: their form is 
$$h (i,n+1) \;  a_{n+1}    (h(n,n+1) \; a_{n+1} )^t  
\; w_r   $$ for some $i$  such that $0\le i \le n$ and with $w_r= 1$ or 
$  \lfloor n, l \rfloor $ with $n \ge l \ge 1$.   \\

\section{Fully commutative affine braids} 

We denote by $B(\tilde A_{n} )$   the affine braid  group with $n+1$ generators of type $\tilde{A}$, while we denote by $B(A_{n})$   the braid   group with $n$ generators of type $A$, where $n \geq 1 $. 
		By definition $B(\tilde A_{n} ) $ has $ \left\{ \sigma_{1}, \sigma_{2}, \dots,   \sigma_{n}, a_{n+1}   \right\}$ as a set of generators together with the following defining relations:  \\
		        	
			\begin{itemize}[label=$\bullet$, font=\normalsize, font=\color{black}, leftmargin=2cm, parsep=0cm, itemsep=0.25cm, topsep=0cm]
				 \item[(1)] $\sigma_{i} \sigma_{j} =\sigma_{j} \sigma_{i} $  for $1\leq i,j\leq n$ and $ \left| i-j\right| \geq 2$,
				 \item[(2)] $\sigma_{i}\sigma_{i+1}\sigma_{i} = \sigma_{i+1}\sigma_{i}\sigma_{i+1}$ for $1\leq i\leq n-1$,
				 \item[(3)] $\sigma_{i} a_{n+1} = a_{n+1} \sigma_{i} $ for $2\leq i \leq n-1$, 
				 \item[(4)] $\sigma_{1} a_{n+1} \sigma_{1} = a_{n+1} \sigma_{1} a_{n+1}$ for $n\geq 2$,
				 \item[(5)] $\sigma_{n} a_{n+1} \sigma_{n} = a_{n+1} \sigma_{n} a_{n+1}$ for $n\geq 2$,\\
			\end{itemize}
while $B(A_{n})$ is   generated by $ \left\{ \sigma_{1}, \sigma_{2}, \dots,  \sigma_{n}  \right\}$ with relations (1) and (2). 

\begin{lemma}\label{braidinjection} The following map: 
       \begin{eqnarray}
					R_{n}: B(\tilde A_{n-1} ) &\longrightarrow& B(\tilde A_{n} )  \nonumber\\
					\sigma_{i} &\longmapsto& \sigma_{i}$ ~~~ \text{for} $1\leq i\leq n-1 \nonumber\\
					a_{n} &\longmapsto& \sigma_{n} a_{n+1}\sigma^{-1}_{n} \nonumber
				\end{eqnarray}
				is a group monomorphism. 
\end{lemma}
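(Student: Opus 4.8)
The plan is to prove the statement in two independent steps: first that $R_n$ is a well-defined homomorphism, i.e. that the images of the generators of $B(\tilde A_{n-1})$ satisfy all its defining relations; and second that this homomorphism is injective, which is the substantial part.

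For well-definedness I would run through the relations (1)--(5) of $B(\tilde A_{n-1})$ (the presentation above with $n$ replaced by $n-1$) and check that they survive under $R_n$. Relations (1) and (2) only involve $\sigma_1,\dots,\sigma_{n-1}$, which are fixed by $R_n$, so they hold verbatim in $B(\tilde A_n)$. For relation (3), when $2\le i\le n-2$ the generator $\sigma_i$ commutes in $B(\tilde A_n)$ with both $\sigma_n$ and $a_{n+1}$, hence with $R_n(a_n)=\sigma_n a_{n+1}\sigma_n^{-1}$. For relation (4), when $n\ge 3$ the generator $\sigma_1$ commutes with $\sigma_n$, so the conjugating factors $\sigma_n^{\pm 1}$ pull to the outside and the relation collapses to relation (4) of $B(\tilde A_n)$. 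The only genuine computation is relation (5): I must show that $\sigma_{n-1}$ and $\sigma_n a_{n+1}\sigma_n^{-1}$ satisfy a braid relation. Writing $s=\sigma_{n-1}$, $t=\sigma_n$, $b=a_{n+1}$, these satisfy $sts=tst$, $sb=bs$ and $tbt=btb$ (relation (2), relation (3) with $i=n-1$, and relation (5) of $B(\tilde A_n)$), so they satisfy the defining relations of the braid group of type $A_3$ on the path $s-t-b$, in which $s$ and $tbt^{-1}$ are band generators sharing one strand and hence braid. Concretely, using $t^{-1}st=sts^{-1}$ and $tst^{-1}=s^{-1}ts$ together with $sb=bs$ and $tbt=btb$, one rewrites $(tbt^{-1})\,s\,(tbt^{-1})$ into $s\,(tbt^{-1})\,s$ in a few lines. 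This is the only obstacle within the first step.

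The injectivity is the harder part. First I would record what does \emph{not} work: the image $\langle \sigma_1,\dots,\sigma_{n-1},\,\sigma_n a_{n+1}\sigma_n^{-1}\rangle$ is not a standard parabolic subgroup of $B(\tilde A_n)$ --- deleting any node of the $(n+1)$-cycle $\tilde A_n$ leaves a path $A_n$, never the $n$-cycle $\tilde A_{n-1}$ --- and it is not conjugate to one, so the parabolic-subgroup injectivity theorems cannot be invoked directly. Rather, $R_n$ is an ``add a strand'' embedding, the affine analogue of $B(A_m)\hookrightarrow B(A_{m+1})$. Accordingly, the route I would take is to realise $B(\tilde A_n)$ as the braid group of $n+1$ points on the annulus and to identify $R_n$ with the section of the Fadell--Neuwirth fibration that forgets the newly added point. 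That fibration has aspherical fibre (the annulus with $n$ punctures) and a section obtained by introducing a strand near the boundary, yielding a split short exact sequence $1\to F\to B(\tilde A_n)\to B(\tilde A_{n-1})\to 1$ whose splitting is injective; matching this splitting with the formulas $\sigma_i\mapsto\sigma_i$ and $a_n\mapsto \sigma_n a_{n+1}\sigma_n^{-1}$ then gives the claim.

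The hard part will be this identification: verifying that the algebraically defined $R_n$ genuinely coincides with the geometric section, which requires pinning down the annular-braid generators and checking that the ``new'' affine generator maps to the prescribed conjugate of $a_{n+1}$. If one prefers to stay algebraic, the alternative I would pursue is to exhibit a faithful representation of $B(\tilde A_n)$ (an affine Lawrence--Krammer/BMW-type representation) whose restriction along $R_n$ is visibly faithful on $B(\tilde A_{n-1})$. Either way the substantive content lies entirely in establishing injectivity, since well-definedness is a finite check.
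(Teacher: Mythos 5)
Your first step (well-definedness) is fine: the only nontrivial check is the braid relation between $\sigma_{n-1}$ and $\sigma_n a_{n+1}\sigma_n^{-1}$, and the rewriting you sketch does close up (with $s=\sigma_{n-1}$, $t=\sigma_n$, $b=a_{n+1}$, using $t^{-1}st=sts^{-1}$, $ts^{-1}t^{-1}=s^{-1}t^{-1}s$, $sb=bs$ and $btb=tbt$ one gets $(tbt^{-1})s(tbt^{-1})=s(tbt^{-1})s$). The genuine gap is in the injectivity step, and it is concrete: $B(\tilde A_n)$ is \emph{not} the braid group of $n+1$ points on the annulus. That group is the Artin group of type $B_{n+1}$; by the Graham--Lehrer exact sequence that the paper itself invokes, $1\to B(\tilde A_n)\to B(B_{n+1})\to \mathbb{Z}\to 1$, the affine braid group is only the kernel of the total-winding homomorphism, a proper normal subgroup of the annular braid group. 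Because of this, the split exact sequence you assert, $1\to F\to B(\tilde A_n)\to B(\tilde A_{n-1})\to 1$, does not exist as stated: the Fadell--Neuwirth ``forget the new point'' map is defined only on the mixed-braid subgroup in which the new strand is pure (one cannot forget an undistinguished point of an unordered configuration), and even on that subgroup, forgetting a pure strand of winding number $k$ changes the total winding by $-k$, so winding-zero braids need not be sent to winding-zero braids. Neither the middle term nor the quotient of your sequence can be what you wrote.

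The strategy is salvageable, but the salvage is essentially the paper's proof. Run the section argument one level up: the boundary-parallel ``add a strand'' map $B(B_n)\to B(B_{n+1})$ (on the standard generators, $t\mapsto t$, $\sigma_i\mapsto\sigma_i$) is injective --- this is where your forget-the-distinguished-strand retraction genuinely lives, and the paper simply cites Geck--Lambropoulou for it. Since the added strand has winding zero, this injection carries the winding-zero subgroup $B(\tilde A_{n-1})\subset B(B_n)$ into the winding-zero subgroup $B(\tilde A_n)\subset B(B_{n+1})$, and the only remaining content is the computation, done in the paper with $a_{n+1}=t\sigma_1\cdots\sigma_n\sigma_{n-1}^{-1}\cdots\sigma_1^{-1}t^{-1}$, that this restriction sends $a_n$ to $\sigma_n a_{n+1}\sigma_n^{-1}$, i.e.\ coincides with $R_n$. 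Note that this route also makes your first step redundant: once $R_n$ is exhibited as the restriction of an injective homomorphism, the defining relations are preserved automatically, which is why the paper never verifies them.
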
 
\begin{proof} 
 Graham and Lehrer give in   \cite[\S 2]{Graham_Lehrer_2003}  a presentation of the $B$-type braid group $B(B_{n+1})$ with generators 
$ \left\{\tau_{n+1},  \sigma_{1},   \dots,   \sigma_{n}, a_{n+1}   \right\}$ and relations 
(1) to (5) above, plus 
$$ \tau_{n+1} \sigma_i \tau_{n+1}^{-1} = \sigma_{i+1} \text{ for } 1\le i \le n-1, \quad 
\tau_{n+1} \sigma_n \tau_{n+1}^{-1} =a_{n+1}  , \quad  \tau_{n+1} a_{n+1} \tau_{n+1}^{-1} = \sigma_{1}. $$
This presentation is related to the usual presentation of the  $B$-type braid group,   with generators 
$ \left\{t,  \sigma_{1}, \sigma_{2}, \dots,   \sigma_{n}     \right\}$ and braid relations, by 
$$\tau_{n+1} = t   \sigma_{1}   \sigma_{2}  \dots  \sigma_{n}, \quad  
a_{n+1} = \tau_{n+1} \sigma_n \tau_{n+1}^{-1} =  t   \sigma_{1}   \sigma_{2}  \dots  \sigma_{n}  \sigma_{n-1}^{-1}  \dots   \sigma_{1}^{-1} t^{-1} .$$ 
They show that the subgroup of $B(B_{n+1})$ generated by $ \left\{   \sigma_{1},   \dots,   \sigma_{n}, a_{n+1}   \right\}$ is isomorphic to $B(\tilde A_{n} )$ and fits into the exact sequence \cite[Corollary 2.7]{Graham_Lehrer_2003}
 $$1\longrightarrow B(\tilde A_{n} )  \longrightarrow B(B_{n+1}) \longrightarrow \mathbb{Z}\longrightarrow 1.$$ 
We thus view  $B(\tilde A_{n} )$ as a subgroup of the  braid group $B(B_{n+1})$ for $n \geq 1$.   

There is a natural injection from $B(B_{n})$ generated by  $ \left\{t,  \sigma_{1}, \sigma_{2}, \dots,   \sigma_{n-1}     \right\}$ into $ B(B_{n+1})$ generated by   $ \left\{t,  \sigma_{1}, \sigma_{2}, \dots,   \sigma_{n}     \right\}$  (see e.g.  \cite[\S 2.2]{Geck_Lambro}).  We claim 
    that this injection restricts to $R_n$ on $ B(\tilde A_{n-1}  )$. To prove this claim we only need to check that  the image of $a_n$ is $\sigma_{n} a_{n+1}\sigma^{-1}_{n} $. Since  $\sigma_n$ commutes with $t$, $\sigma_1$, \dots,  $\sigma_{n-2}$, we have: 
$$\begin{aligned}
\sigma_{n} a_{n+1}\sigma^{-1}_{n} &= 
\sigma_{n} t   \sigma_{1}   \sigma_{2}  \dots  \sigma_{n}  \sigma_{n-1}^{-1}  \dots   \sigma_{1}^{-1} t^{-1}\sigma^{-1}_{n} \\ 
&=  t   \sigma_{1}   \sigma_{2}  \dots \sigma_{n-2}  \sigma_{n}  \sigma_{n-1} \sigma_{n}  \sigma_{n-1}^{-1} \sigma^{-1}_{n} \sigma_{n-2}^{-1} \dots   \sigma_{1}^{-1} t^{-1} 
\\ 
&=  t   \sigma_{1}   \sigma_{2}  \dots \sigma_{n-2}  \sigma_{n-1}  \sigma_{n } \sigma_{n-1}  \sigma_{n-1}^{-1} \sigma^{-1}_{n} \sigma_{n-2}^{-1} \dots   \sigma_{1}^{-1} t^{-1} 
\\ 
&=  t   \sigma_{1}   \sigma_{2}  \dots \sigma_{n-2}  \sigma_{n-1}    \sigma_{n-2}^{-1} \dots   \sigma_{1}^{-1} t^{-1} 
\\ 
&= a_n
\end{aligned}
$$     as announced. 
\end{proof} 		
		
Using   the injective morphism $R_n$ we now view $ B(\tilde A_{n-1} )$ as a subgroup of   $ B(\tilde A_{n} )$. 
Let  $\psi_{n} $ be   the Dynkin automorphism  of $ B(\tilde A_{n-1} ) $ that shifts the generators of the Dynkin diagram one step   counterclockwise ($ \sigma_{1} \mapsto a_{n} \mapsto   \sigma_{n-1}   \mapsto \dotsb   \mapsto  \sigma_{2}   \mapsto \sigma_{1} $).
It generates a subgroup of $\text{Aut}( B(\tilde A_{n-1} ) )$ of order $n$.   We  simply refer  to it by  $\psi$ in what follows. 
		
\begin{lemma}\label{Dynkin}    The element 
   $c_n=  \sigma_{n} \sigma_{n-1}\dots \sigma_{1}a_{n+1} $ of $ B(\tilde A_{n} ) $   normalizes $ B(\tilde A_{n-1} ) $. It acts  by conjugacy on $ B(\tilde A_{n-1} ) $ in the same way as $\psi$.   We will write: 
   $$
   c_n^t h = \psi^{t} \left[ h\right]   c_n^t  \text{ for any } h \in  B(\tilde A_{n-1} )  . 
   $$ 
\end{lemma}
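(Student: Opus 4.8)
The plan is to reduce everything to a check on a generating set and then run three short braid computations. Write $\gamma$ for conjugation by $c_n$, an inner automorphism of $B(\tilde A_n)$. Under the embedding $R_n$ the subgroup $B(\tilde A_{n-1})$ is generated by $\sigma_1,\dots,\sigma_{n-1}$ together with $a_n=\sigma_n a_{n+1}\sigma_n^{-1}$, and by definition $\psi$ sends $\sigma_1\mapsto a_n$, $a_n\mapsto\sigma_{n-1}$ and $\sigma_i\mapsto\sigma_{i-1}$ for $2\le i\le n-1$. Since $\psi$ is an automorphism of $B(\tilde A_{n-1})$, it suffices to prove that $\gamma$ agrees with $\psi$ on these generators: two homomorphisms agreeing on a generating set agree everywhere, so $\gamma\big(B(\tilde A_{n-1})\big)=\psi\big(B(\tilde A_{n-1})\big)=B(\tilde A_{n-1})$, which yields both the normalization and the stated action at once. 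Concretely I must verify
\[
c_n\sigma_i c_n^{-1}=\sigma_{i-1}\ (2\le i\le n-1),\qquad c_n\sigma_1 c_n^{-1}=\sigma_n a_{n+1}\sigma_n^{-1},\qquad c_n\big(\sigma_n a_{n+1}\sigma_n^{-1}\big)c_n^{-1}=\sigma_{n-1}.
\]

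Setting $D=\sigma_n\sigma_{n-1}\cdots\sigma_1$, so that $c_n=D\,a_{n+1}$, the key preliminary step is the \emph{descending shift identity}
\[
D\,\sigma_i=\sigma_{i-1}\,D\qquad(2\le i\le n).
\]
I would prove it directly: appending $\sigma_i$ on the right of $D$, it commutes to the left past $\sigma_{i-2},\dots,\sigma_1$ by relation (1); the braid relation $\sigma_i\sigma_{i-1}\sigma_i=\sigma_{i-1}\sigma_i\sigma_{i-1}$ then converts the local block, and the freed $\sigma_{i-1}$ commutes out to the far left past $\sigma_{i+1},\dots,\sigma_n$, reassembling $D$ to its right. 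This is the descending analogue of the classical Coxeter-element shift and is the only place where the pure braid relation (2) is used.

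With this in hand the three identities are short. For $2\le i\le n-1$, relation (3) gives $a_{n+1}\sigma_i a_{n+1}^{-1}=\sigma_i$, so $c_n\sigma_i c_n^{-1}=D\sigma_i D^{-1}=\sigma_{i-1}$ by the shift identity. For $\sigma_1$, relation (4) rewritten as $a_{n+1}\sigma_1 a_{n+1}^{-1}=\sigma_1^{-1}a_{n+1}\sigma_1$ lets the outer $\sigma_1^{\pm1}$ cancel against the ends of $D$ and $D^{-1}$; sliding $a_{n+1}$ leftward past $\sigma_{n-1},\dots,\sigma_2$ (again relation (3)) makes the block $\sigma_{n-1}\cdots\sigma_2$ annihilate its inverse, leaving exactly $\sigma_n a_{n+1}\sigma_n^{-1}$. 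For $a_n=\sigma_n a_{n+1}\sigma_n^{-1}$, relation (5) produces the collapse $a_{n+1}\big(\sigma_n a_{n+1}\sigma_n^{-1}\big)a_{n+1}^{-1}=\sigma_n$, whence $c_n a_n c_n^{-1}=D\sigma_n D^{-1}=\sigma_{n-1}$ by the shift identity with $i=n$.

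These three values are precisely $\psi(\sigma_i)$, $\psi(\sigma_1)$ and $\psi(a_n)$, so $\gamma=\psi$ on $B(\tilde A_{n-1})$. The displayed relation $c_n^{t}h=\psi^{t}[h]\,c_n^{t}$ then follows by iterating $c_n h=\psi(h)c_n$ through a one-line induction on $t$, using that $\psi$ is a homomorphism. The main obstacle is not conceptual but organizational: getting the end effects right. Relation (3) merely transports factors across $a_{n+1}$, whereas the two genuinely nontrivial local moves sit at the two ends of the affine diagram and come from relations (4) and (5)---the rewriting $a_{n+1}\sigma_1 a_{n+1}^{-1}=\sigma_1^{-1}a_{n+1}\sigma_1$ and the collapse $a_{n+1}\sigma_n a_{n+1}\sigma_n^{-1}a_{n+1}^{-1}=\sigma_n$---so the care lies in pairing these with the descending shift identity without mismatching indices (the edge case $n=2$, where several intermediate products are empty, should be inspected separately but causes no trouble).
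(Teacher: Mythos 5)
Your proof is correct and follows essentially the same route as the paper: both verify that conjugation by $c_n$ agrees with $\psi$ on the generators $\sigma_i$ ($2\le i\le n-1$), $\sigma_1$ and $a_n$ of $B(\tilde A_{n-1})$, using the shift property of $\sigma_n\sigma_{n-1}\cdots\sigma_1$ together with the braid relations at the two affine edges (the paper packages the edge computations via the derived relation $a_{n+1}a_n=a_n\sigma_n=\sigma_na_{n+1}$, while you use relations (4) and (5) directly with inverses --- a cosmetic difference). The generating-set reduction and the one-line induction for $c_n^t h=\psi^t[h]\,c_n^t$ match the paper's implicit argument, so nothing is missing.
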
  
\begin{proof} Applying braid relations  in $ B(\tilde A_{n} ) $ we see that, for $ 2 \leq i \leq n-1$, we have: 
			\begin{eqnarray}
				\sigma_{n} \sigma_{n-1} \dots  \sigma_{1}a_{n+1} \sigma_{i} &=& 
\sigma_{n}  \dots \sigma_{i+1}\ \sigma_{i}\sigma_{i-1}\sigma_{i} \dots \sigma_{1}a_{n+1} \nonumber \\ 
 &=& 
\sigma_{n}  \dots \sigma_{i+1}\sigma_{i-1}\sigma_{i} \sigma_{i-1}\dots \sigma_{1}a_{n+1}\nonumber \\ 
 &=&
\sigma_{i-1} \sigma_{n}\sigma_{n-1} \dots \sigma_{1}a_{n+1}  .  \nonumber
\end{eqnarray}
In a similar way, we use the braid relations involving $a_{n+1} $ and the following relations worth keeping in mind: 
\begin{equation}	\label{braidnnplus1}		  a_{n+1} a_{n}=  \sigma^{-1}_{n} a_{n}\sigma_{n}a_{n} = \sigma^{-1}_{n} \sigma_{n} a_{n} \sigma_{n}= a_{n} \sigma_{n} =  \sigma_{n}a_{n+1}    
\end{equation} 
to obtain: 
\begin{eqnarray}
				\sigma_{n} \sigma_{n-1} \dots \sigma_{1}a_{n+1} \sigma_{1} &=& a_{n} \sigma_{n} \sigma_{n-1} \dots\sigma_{1}a_{n+1}, \nonumber\\
				\sigma_{n} \sigma_{n-1} \dots \sigma_{1}a_{n+1} a_{n} &=& \sigma_{n-1} \sigma_{n} \sigma_{n-1} \dots \sigma_{1}a_{n+1}  \nonumber
			\end{eqnarray}					
hence our result.			\end{proof} 

Let $n \ge 1$. Recall from \cite[IV.1, Proposition 5]{Bourbaki_1981} that there exists a map 
$$g :   W (\tilde A_{n} )  \longrightarrow B(\tilde A_{n} ) $$ 
such that, given any  reduced expression of   $w \in W (\tilde A_{n} )$, the image 
$g(w)$ of $w$ is defined by the same expression in the braid group 
(keeping the same symbols for the generators of the affine braid group  and their images via the natural surjection onto the affine Coxeter group).

\begin{definition} 
  We call  {\rm fully commutative braids} the images  under $g$ of the fully commutative elements in $W (\tilde A_{n} )$, 
    i.e. the elements of 
   $\{g(w) \ | \   w \in  W^{c}(\tilde A_{n} )\}$.
  \end {definition}	

We now proceed to give a general form for fully commutative braids in which the element $c_n$ is singled out, as a consequence of  the normal form for fully commutative elements in   Theorem~\ref{FC}. 	We use the notation  in Section~\ref{notations},   equally valid for the braid groups,  with an additional index $n$ 
in $B (\tilde A_{n} ) $ and $n-1$ in $B  (\tilde A_{n-1} ) $.

		\begin{lemma} \label{3_1} Consider the element 
			  $ \  y = h_n(j, j+1)  \, a_{n+1}$ of $ \,  B(\tilde A_{n} ) $ 
			  where $1 \leq j \leq n$. Let $ k\geq 1 $ be an integer and write $k= m(n-j+1) +r  \  $ with $  \   0\leq r < n-j+1 $. Then if $j=n$ we have 
			  $y^k= c_n^{k}$ while for $j<n$ we have (with the convention 
			  $\lfloor n,n+1-r \rfloor =1$ if  $r=0$):  \\

			\begin{itemize}[label=$\bullet$, font=\normalsize, font=\color{black}, leftmargin=2cm,parsep=0cm, itemsep=0.25cm, topsep=0cm]
				\item[$(1)$] if $ m=0 $:
					$  \   y^{k} = (h_{n-1}(j, j+1)a_{n})^{r} \lfloor n,n+1-r \rfloor ; $	
	 				 \item[$(2)$] if $ m>0 $:
$$ \begin{aligned}
y^{k}  &=    \left(\prod^{i=m-1}_{i=0}  \psi^{i} \left[ (h_{n-1}(j, j+1)a_{n})^{n-j}\lceil  j,n-1 \rceil 	 \right] \right) \psi^{m}\big[(h_{n-1}(j, j+1)a_{n})^{r}\big]  \\						  
						&\qquad \qquad \qquad  c_n^{m}  \lfloor n,n+1-r \rfloor. \end{aligned}
$$	
			\end{itemize}
		\end{lemma}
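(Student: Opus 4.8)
The plan is to peel a single generator $\sigma_n$ off $y$, reduce the computation to the rank $n-1$ element $b:=h_{n-1}(j,j+1)\,a_n\in B(\tilde A_{n-1})$, and then transport to the right the powers of $c_n$ that appear, using Lemma~\ref{Dynkin}. The first observation is that $h_n(j,j+1)=h_{n-1}(j,j+1)\,\sigma_n$ (by the definition of $h$), so, using $a_n\sigma_n=\sigma_n a_{n+1}$ from~\eqref{braidnnplus1}, we get $y=h_{n-1}(j,j+1)\,\sigma_n a_{n+1}=h_{n-1}(j,j+1)\,a_n\,\sigma_n=b\,\sigma_n$. When $j=n$ the middle staircase is empty, $y=\sigma_n\sigma_{n-1}\dots\sigma_1 a_{n+1}=c_n$, and $y^k=c_n^k$ is immediate; so assume $j<n$ from now on. Note also the identity $b\,\sigma_n=\lfloor j,1\rfloor\lceil j+1,n\rceil\,a_{n+1}$, used repeatedly below.

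First I would settle the case $m=0$, namely $y^r=b^r\lfloor n,n+1-r\rfloor$ for $0\le r\le n-j$, by induction on $r$; the cases $r=0$ (the convention $\lfloor n,n+1\rfloor=1$) and $r=1$ ($y=b\sigma_n=b\lfloor n,n\rfloor$) are clear. For the inductive step one must show $\lfloor n,n+1-r\rfloor\,b\,\sigma_n=b\,\lfloor n,n-r\rfloor$. Rewriting $b\sigma_n=\lfloor j,1\rfloor\lceil j+1,n\rceil a_{n+1}$ on the left, and $b\,\lfloor n,n-r\rfloor=\lfloor j,1\rfloor\lceil j+1,n\rceil\,\sigma_{n-1}\dots\sigma_{n-r}\,a_{n+1}$ on the right (legitimate because $a_{n+1}$ commutes with $\sigma_2,\dots,\sigma_{n-1}$ and $n-r\ge j+1\ge2$), then commuting $\lfloor n,n+1-r\rfloor$ across $\lfloor j,1\rfloor$ (possible since $n+1-r\ge j+2$), the step reduces to the purely type-$A$ staircase identity $\lfloor n,n+1-r\rfloor\,\lceil j+1,n\rceil=\lceil j+1,n\rceil\,\lfloor n-1,n-r\rfloor$, which follows from repeated use of relations (1) and (2).

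Next I would establish the period relation. Taking $r=n-j$ above gives $y^{n-j}=b^{n-j}\lfloor n,j+1\rfloor$; multiplying once more by $y=b\sigma_n$ and invoking the type-$A$ identity $\lfloor n,j+1\rfloor\lfloor j,1\rfloor\lceil j+1,n\rceil=\lceil j,n-1\rceil\lfloor n,1\rfloor$ (again relations (1),(2)), together with $c_n=\lfloor n,1\rfloor a_{n+1}$, yields $y^{n-j+1}=b^{n-j}\lceil j,n-1\rceil\,c_n=:P\,c_n$, where $P=b^{n-j}\lceil j,n-1\rceil\in B(\tilde A_{n-1})$. For general $m>0$ I then write $k=m(n-j+1)+r$ and $y^k=(Pc_n)^m\,y^r$. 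Collecting the $m$ copies of $c_n$ to the right through $P$ by Lemma~\ref{Dynkin} gives $(Pc_n)^m=\big(\prod_{i=0}^{m-1}\psi^i[P]\big)c_n^m$; inserting $y^r=b^r\lfloor n,n+1-r\rfloor$ and moving $c_n^m$ past $b^r\in B(\tilde A_{n-1})$ via $c_n^m b^r=\psi^m[b^r]\,c_n^m$ produces precisely the announced formula, since $P=(h_{n-1}(j,j+1)a_n)^{n-j}\lceil j,n-1\rceil$ and $b^r=(h_{n-1}(j,j+1)a_n)^r$.

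The genuine work lies entirely in the two type-$A$ staircase identities invoked above; everything else is bookkeeping with the commutation of $a_{n+1}$ with $\sigma_2,\dots,\sigma_{n-1}$, the relation $a_n\sigma_n=\sigma_n a_{n+1}$, and the conjugation rule of Lemma~\ref{Dynkin}. I expect the main obstacle to be verifying these staircase identities cleanly and, above all, keeping track of the range conditions ($n+1-r\ge j+2$ and $n-r\ge2$) and the degenerate boundary cases $j=1$ and $j=n-1$, where $\lfloor j,1\rfloor$ or $\lceil j+1,n-1\rceil$ collapses, so that each commutation invoked is actually legitimate.
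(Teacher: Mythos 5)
Your proposal is correct and follows essentially the same route as the paper: peeling off $\sigma_n$ to write $y=h_{n-1}(j,j+1)a_n\sigma_n$, establishing $y^t=(h_{n-1}(j,j+1)a_n)^t\lfloor n,n+1-t\rfloor$ for $t\le n-j$, deriving the period relation $y^{n-j+1}=(h_{n-1}(j,j+1)a_n)^{n-j}\lceil j,n-1\rceil c_n$, and then collecting the powers of $c_n$ to the right via Lemma~\ref{Dynkin}. The only difference is organizational: the paper iterates the single relation $\sigma_s y=y\sigma_{s-1}$ (for $j+1<s\le n$) where you instead induct on $r$ and invoke explicit type-$A$ staircase identities, and both of your staircase identities are indeed valid consequences of relations (1) and (2) within the stated range conditions.
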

				
		\begin{demo}	We have 	for $j<n$, using relations (\ref{braidnnplus1}	): 
			$$
				 y = \sigma_{j} \dots \sigma_{1} \sigma_{j +1 } \dots \sigma_{n-1} \sigma_{n}a_{n+1} =  \sigma_{j} \dots  \sigma_{1} \sigma_{j +1 } \dots \sigma_{n-1} a_{n} \sigma_{n}= h_{n-1}(j, j+1)a_{n}\sigma_{n}. $$
			 Observe that for $ j+1  < s \leq n $  we have $ \sigma_{s}y= y \sigma_{s-1}$. Hence if $j+1 < n$: 
$$				 y^{2} =  h_{n-1}(j, j+1)a_{n}\sigma_{n}  y = 
h_{n-1}(j, j+1)a_{n}y \sigma_{n-1}  = (h_{n-1}(j, j+1)a_{n})^2  \sigma_{n}\sigma_{n-1}.	    $$
			Continuing this way, we can see that  whenever $ 0 \leq t \leq n-j $ we have 
	$$
				y^{t}= (h_{n-1}(j, j+1)a_{n})^{t} \lfloor n,n+1-t \rfloor.  $$
 This holds   in particular for 		$t=n-j$   thus: 
$$
				y^{n-j+1} = (h_{n-1}(j, j+1)a_{n})^{n-j}
				\sigma_{n} \sigma_{n-1} \dots  \sigma_{j+1}\sigma_{j} \dots \sigma_{2} \sigma_{1} \sigma_{j +1 } \dots \sigma_{n-1} \sigma_{n} a_{n+1}   . 
$$				
We now use:   
$ \  \sigma_{n} \sigma_{n-1} \dots   \sigma_{1} \sigma_u = 
\sigma_{u-1} \sigma_{n} \sigma_{n-1} \dots   \sigma_{1} \ $ 
for $2\le u \le n$,  and get: 
$$
			y^{n-j+1} = (h_{n-1}(j, j+1)a_{n})^{n-j} \lceil  j,n-1 \rceil c_n. $$		
An easy induction using Lemma~\ref{Dynkin} leads to:   
$$
				y^{m(n-j+1)} = \left( \prod^{i=m-1}_{i=0} \psi^{i} \left[  (h_{n-1}(j, j+1)a_{n})^{n-j} \lceil  j,n-1 \rceil \right] \right)   \  c_n^{m} .$$
			
			Finally, let $k= m(n-j+1) +r $, where $0\leq r < n-j+1 $. We have:
		$$
		\begin{aligned}
				&y^{k}  =    \left(\prod^{i=m-1}_{i=0} \psi^{i} \left[  (h_{n-1}(j, j\!+\!1)a_{n})^{n-j} \lceil  j,n-1 \rceil \right] \right)   \  c_n^{m}
				(h_{n-1}(j, j\!+\!1)a_{n})^{r} \lfloor n,n+1-r \rfloor 
			 \\
			   &= \!  \left(\prod^{i=m-1}_{i=0} \psi^{i} \left[  (h_{n-1}(j, j\!+\!1)a_{n})^{n-j} \lceil  j,n-1 \rceil \right]  \right)   \  \psi^{m}\left[(h_{n-1}(j, j\!+\!1)a_{n})^{r} \right] c_n^{m}
				 \lfloor n,n\!+\!1\!-\! r \rfloor  				 \end{aligned} 	
$$	as claimed. 		
		\end{demo}

			\begin{theorem} \label{2_5_2}
			Let $ n \ge 2$. Let $w$ be a fully commutative braid in $B(\tilde A_{n} )$. Then $w$ can be written in one and only one of the following two forms:

			\begin{eqnarray}
				&   &u \ c_n^{t} \ v   
				   \nonumber \\  
				\text{or } \quad   &  \lfloor i,1 \rfloor  a_{n+1} \  &u \ c_n^{t} \  v , 
			     \nonumber  \\ \nonumber
			\end{eqnarray} 			
with  $u \in B(\tilde A_{n-1} )$,   $v \in B(A_{n})$,  $t \ge 0$   and $ 0 \leq i  \leq n-1$.

		\end{theorem}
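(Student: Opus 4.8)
The plan is to start from the unique normal form of $\bar w:=\pi(w)$, where $\pi\colon B(\tilde A_n)\to W(\tilde A_n)$ is the natural projection, as given by Theorem~\ref{FC}, and to lift it letter‑by‑letter to $B(\tilde A_n)$ via the section $g$, so that $w=g(\bar w)$ is literally the same word read in the braid group. If $L(\bar w)=0$ then $w\in B(A_n)$ and the first form holds with $u=1$, $t=0$, $v=w$; so I assume $L(\bar w)\ge 1$ and argue according to the five shapes of the leftmost factor $h(i_1,r_1)$ listed just before Theorem~\ref{FC}. The two output shapes will correspond exactly to whether $\sigma_n$ occurs in $h(i_1,r_1)$: it does in cases (1), (2), (3) (yielding $u\,c_n^{t}\,v$), and it fails to occur precisely when $r_1=n+1$ and $i_1<n$, i.e.\ $h(i_1,r_1)=\lfloor i_1,1\rfloor$ (cases (4), (5), yielding the prefix $\lfloor i_1,1\rfloor a_{n+1}$ with $0\le i_1\le n-1$).

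The single power of $c_n$ is produced by the repeated block. Applying Lemma~\ref{3_1} to $(h(j,j+1)a_{n+1})^{k}$ rewrites it as $B\,c_n^{m}\,\lfloor n,n+1-r\rfloor$ with $B\in B(\tilde A_{n-1})$, $\lfloor n,n+1-r\rfloor\in B(A_n)$ and $k=m(n-j+1)+r$; this last factor merges with $w_r$ to begin building $v$ (and $t=m$, or $t=k$ when $j=n$). For the leftmost factors I would use the rewriting $h(i,r)a_{n+1}=\bigl(\lfloor i,1\rfloor\lceil r,n-1\rceil a_n\bigr)\sigma_n$, valid whenever $\sigma_n$ occurs (recall $a_n=\sigma_n a_{n+1}\sigma_n^{-1}$ generates $B(\tilde A_{n-1})$), together with the sliding identity $\sigma_m\,h(i',r')a_{n+1}=h(i',r')a_{n+1}\,\sigma_{m-1}$, which holds for $3\le m\le n$ and $r'\le m-1$ by relations (1)--(3) and one braid move. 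The strict nesting $r_1>\dots>r_p$ of Theorem~\ref{FC} guarantees $r_s\le m-1$ at every step, so each freed $\sigma_n$ slides cleanly to the right past all later nested factors, depositing the $B(\tilde A_{n-1})$ pieces $\lfloor i_s,1\rfloor\lceil r_s,n-1\rceil a_n$ on the left and a positive tail in $B(A_n)$ on the right. Finally I would invoke Lemma~\ref{Dynkin} to carry every $B(\tilde A_{n-1})$ piece to the left of the single $c_n^{t}$ (replacing it by a $\psi$‑power of itself), collecting them into one $u\in B(\tilde A_{n-1})$, while all the $\sigma$‑tails together with $w_r$ are gathered on the right into one $v\in B(A_n)$.

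In the prefix cases (4), (5) the factor $\lfloor i_1,1\rfloor$ carries no $\sigma_n$, so its $a_{n+1}$ has nothing to pair with and is left in front as $\lfloor i_1,1\rfloor a_{n+1}$; the remaining factors $h(i_s,r_s)a_{n+1}$ with $s\ge 2$ all contain $\sigma_n$ (since then $r_s\le n$) and are processed exactly as above, giving the second form. The degenerate subcase $i_2=n$ (so the block is $(h(n,n+1)a_{n+1})^{k}=c_n^{k}$) and the cases $p=0$ are immediate from Lemma~\ref{3_1}. For uniqueness I would project the asserted identity back through $\pi$: the discrete data $(p,k,j,i_1,\dots,i_p,r_1,\dots,r_p,w_r)$ are pinned down by $\bar w$ through the uniqueness part of Theorem~\ref{FC}, and $u$, $t$, $v$ and the prefix index $i$ are reconstructed from them by the explicit formulas above, so both the choice of shape and all the factors are determined.

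The main obstacle is the simultaneous bookkeeping in the second paragraph: one must verify that the cascade of $\sigma_n$‑slides through the nested leading factors, once reconciled with the output of Lemma~\ref{3_1} on the block, leaves behind \emph{exactly} one power of $c_n$, one factor in $B(\tilde A_{n-1})$ and one factor in $B(A_n)$ (plus the stated prefix and nothing else). This is precisely where the sharp inequalities of Theorem~\ref{FC}—the strict nesting $i_1<\dots<i_p<r_p<\dots<r_1$, the gap $r_p-i_p\ge 2$, and the condition tying $j$ to $(i_p,r_p)$—get consumed, and where Lemma~\ref{fullandsigma} is needed at each stage to certify that no forbidden braid $\sigma_a\sigma_{a\pm 1}\sigma_a$ or $\sigma_a a_{n+1}\sigma_a$ is created, so that the rewriting never leaves the world of reduced fully commutative words.
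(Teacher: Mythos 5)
Your setup coincides with the paper's own proof: you lift the normal form of Theorem~\ref{FC} through $g$, split according to whether $\sigma_n$ occurs in $h(i_1,r_1)$ (cases with $r_1\le n$ or $i_1=n$ giving the first form, versus $r_1=n+1$ with $i_1\le n-1$ giving the prefix $\lfloor i_1,1\rfloor a_{n+1}$), extract a $\sigma_n$ from each leading factor via $\sigma_n a_{n+1}=a_n\sigma_n$ and slide it rightwards through the nested factors, and control the block $(h_n(j,j+1)a_{n+1})^k$ with Lemma~\ref{3_1} and Lemma~\ref{Dynkin}. But the step you yourself flag as ``the main obstacle'' is not a routine verification; it is the heart of the matter, and your prescription for it fails. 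After the slides one has $w=\rho\;\sigma_n\sigma_{n-1}\cdots\sigma_{\epsilon}\;y^k\,w_r$ with $\rho\in B(\tilde A_{n-1})$, $\epsilon=n-p+1$ and $y=h_n(j,j+1)a_{n+1}$, and the tail $\sigma_n\cdots\sigma_\epsilon$ is trapped: it contains $\sigma_n$, so it is not in $B(\tilde A_{n-1})$ and cannot be absorbed into $u$; and Lemma~\ref{Dynkin} only conjugates elements of $B(\tilde A_{n-1})$ past $c_n$, so it cannot carry this tail to the right across the output $B\,c_n^{m}\lfloor n,n+1-r\rfloor$ of Lemma~\ref{3_1}. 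Hence ``apply Lemma~\ref{3_1} to the block, then gather the $\sigma$-tails into $v$'' does not go through as stated.

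The paper resolves this by letting the tail eat into the block \emph{before} Lemma~\ref{3_1} is invoked in full, with a dichotomy that is absent from your proposal: if $k\le\epsilon-(j+1)$, then $\sigma_n\cdots\sigma_\epsilon\,y^k=y^k\,\sigma_{n-k}\cdots\sigma_{\epsilon-k}$ and no power of $c_n$ arises at all ($t=0$); if $k>\epsilon-(j+1)$, then after sliding as far as possible one reaches $\lfloor n,j+1\rfloor\,y$, and the identity $\lfloor n,j+1\rfloor\,h_n(j,j+1)\,a_{n+1}=\lceil j,n-1\rceil\,c_n$ consumes the stuck tail together with one copy of $y$, producing an \emph{extra} power of $c_n$; only the remaining $y^{h-1}$, $h=k-\epsilon+j+1$, is then handled by Lemma~\ref{3_1}. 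In particular your bookkeeping ``$t=m$ where $k=m(n-j+1)+r$'' is false in general: for $n=3$, $p=1$, $(i_1,r_1)=(0,2)$, $j=1$, $k=2$ one has $m=\lfloor 2/3\rfloor=0$, while the paper's case $(2)(a)$ gives $t=1$. This case division, the identity above, and the resulting corrected exponent of $c_n$ constitute the bulk of the paper's proof and are exactly what is missing from yours.
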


\begin{proof} We have $w= g(\bar w)$  with  $\bar{w}$   in $ W^{c}(\tilde A_{n} )$,  
		and we use the reduced expression of $w$ corresponding to    the normal form of  $\bar w$   given by  Theorem~\ref{FC},  of which we use the notations. 
			We may and do ignore the rightmost term $w_r$ that belongs to $B(A_{n})$ and we remark that if $p=0$ the element has indeed the first claimed form,  
			  by Lemma~\ref{3_1}. We proceed with $p>0$ and 
			set 
				$$ x  = h_n(i_1, r_1) \; a_{n+1} \dots h_n(i_p, r_p) \;  a_{n+1}  $$ 
so that  $w =  x  y^k   $  where   			$ \  y = h_n(j, j+1)  \, a_{n+1}$ as in the previous lemma. \\

We remark that for $1\le i \le p$ we have $ r_{i} \leq r_{1} - i +1$; since    $ r_{1} \leq n+1 $  this  gives: $  r_{i} \leq n- i+2 $. 	
		We have  two   main cases to consider: 		\\ 	
		\begin{itemize}[label=$\bullet$, font=\normalsize, font=\color{black}, leftmargin=2cm,parsep=0cm, itemsep=0.25cm, topsep=0cm]
			\item   $ r_{1} \leq n $, that is $\sigma_{n}$ belongs to the support of $\sigma_{r_{1}} \dots \sigma_{n-1} \sigma_{n} $ ; 
			\item   $ r_{1} = n+1 $, that is  $\sigma_{n}$ does not belong to the support of $  \sigma_{r_{1}} \dots \sigma_{n-1} \sigma_{n} $. \\  		\end{itemize} 
				
		We start with the first case  $ r_{1} \leq n $, so that $ r_{i} \leq n-i+1 $. 
		Using first the relation $ \sigma_{n}a_{n+1} = a_{n} \sigma_{n}  $, then 
 the braid relations $\sigma_u \sigma_{u-1} \sigma_u = \sigma_{u-1} \sigma_u 
 \sigma_{u-1} $ for $u=n$, $n-1$, \dots, $n-p+2$,  
   we push to the right the first $\sigma_n$ from the left   and get:  
$$
x= \lfloor i_1,1 \rfloor  \lceil  r_1,n-1 \rceil a_n  h(i_2, r_2) \; a_{n+1} \dots h(i_p, r_p) \;  a_{n+1} \sigma_{n-(p-1)} . 
$$ 
We  repeat this operation with the first $\sigma_n$ from the left in this new expression, and so on. 
Proceeding from $i=1$ to $i=p$,   we obtain:
$$
x= h_{n-1}(i_1, r_1) \; a_{n } \dots h_{n-1}(i_p, r_p) \; a_{n } 
\sigma_{n}\sigma_{n-1} \dots \sigma_{n - (p-1)}.
$$ 		
	
We set  $\rho                                                                                                                                                                                                                                                                                                                                                                                                                                                                                                                                                                                                                                                                                                                                                                                                                                                                                                                                                                                                                                                                                                                                                                                                                                                     = 	h_{n-1}(i_1, r_1) \; a_{n } \dots h_{n-1}(i_p, r_p) \; a_{n } $, an element of $B(\tilde A_{n-1} ) $. 
	If $ k =0 $ we have  $w=x= \rho  \lfloor n ,n - (p-1)\rfloor $ and our claim holds. Let $ k \ge 1 $ 
and set $ \epsilon = n - (p-1) $. We have $ \epsilon \geq j+1 $ since 
		$
			j+1 \leq r_{p} \leq n-(p-1)$ and  we can write:
$$ w = \rho \  \sigma_{n}\sigma_{n-1} \dots  \sigma_{\epsilon}
			\  y^{k}  , \text{ with } \rho \in B(\tilde A_{n-1} ) \text{ and } \epsilon \ge  j+1. 
			$$
 We recall that   $y= h_n(j, j+1)  \, a_{n+1}$    acts on $ \sigma_{i} $ in the following way: $ \sigma_{i} y = y \sigma_{i-1} $  for  $ j+1 < i \leq n$.  We distinguish  two   cases:\\ 
		
		\begin{itemize}[label=$\bullet$, font=\normalsize, font=\color{black}, leftmargin=2cm,parsep=0cm, itemsep=0.25cm, topsep=0cm]
			\item[$(1)$] $1 \leq k \leq \epsilon - (j+1)$, 
			\item[$(2)$] $\epsilon - (j+1) < k $.\\ 
		\end{itemize}
			
		We start with (1).   We have:
		$  \  	\sigma_{n} \sigma_{n-1} \dots \sigma_{\epsilon} y^{k} = y^{k} \sigma_{n-k} \sigma_{n-1-k} \dots \sigma_{\epsilon - k }  $,  
with  $$ k \leq \epsilon - (j+1) =   n-j-p < n-j .$$ Thus we are in case (1) of Lemma~\ref{3_1}, that is:  
$$
w = \rho (h_{n-1}(j, j+1)a_{n})^{k} \lfloor n,n+1-k \rfloor  \lfloor n-k,\epsilon -k \rfloor 
  \   \in \   B(\tilde A_{n-1} ) \ B( {A_{n}}) $$
 as required (this is the first form in the theorem with $t=0$).  \\
	
		Now we deal with case (2) and set  
		 $h = k-(\epsilon - (j+1)) \ge 1$.  Using the computation from case (1) for 
		$\epsilon - (j+1)$  	we get, using again Lemma~\ref{3_1} (1):
	$$\begin{aligned}
			  \sigma_{n}\sigma_{n-1} \dots  \sigma_{\epsilon} \ y^{k} &= \sigma_{n} \sigma_{n-1} \dots \sigma_{\epsilon} \   y^{\epsilon - (j+1)} y^{h}    = y^{\epsilon- (j+1)} \sigma_{j+p}\sigma_{j+p-1} \dots \sigma_{j+1}
		\  	y^{h} \\
			&=  	 (h_{n-1}(j, j+1)a_{n})^{\epsilon - (j+1)} \lfloor n,j+p+1 \rfloor 
			 \lfloor j+p, j+1 \rfloor  	\  	y^{h}  \\   
		&=  	 (h_{n-1}(j, j+1)a_{n})^{\epsilon - (j+1)} \lfloor n, j+1 \rfloor  
			\  y \ 	y^{h-1}	. 
			 \end{aligned} 	$$ 
We now compute: 
$$\begin{aligned}
\lfloor n, j+1 \rfloor  
			\  y \ 	y^{h-1}	
&= ( \sigma_{n} \dots \sigma_{j+1}) ( \sigma_{j} \dots \sigma_{1} ) (\sigma_{j +1}   \dots \sigma_{n}) a_{n+1}y^{h-1}  \\  
&=(\sigma_{n}   \dots \sigma_{1}) (  \sigma_{j+1} \dots   \sigma_{n})  a_{n+1} y^{h-1} \\
&= (\sigma_{j} \dots  \sigma_{n-1}) (\sigma_{n} \dots  \sigma_{1}) a_{n+1} y^{h-1} 
 \end{aligned} $$ 	 
since 	 $( \sigma_{n} \dots \sigma_{ 1}) \sigma_k= \sigma_{k-1} ( \sigma_{n} \dots \sigma_{ 1})$ 
for $2 \le k \le n$.   
Setting $$\eta = \rho  (h_{n-1}(j, j+1)a_{n})^{\epsilon - (j+1)} 
		 \sigma_{j} \dots   \sigma_{n-1}   \in B(\tilde A_{n-1} )  ,$$   
 	we get  $w = \eta c_n y^{h-1}   $. \\

		\begin{itemize}[label=$\bullet$, font=\normalsize, font=\color{black}, leftmargin=2cm,parsep=0cm, itemsep=0.25cm, topsep=0cm]
			\item[$(a)$] If $ h-1 \leq n-j $, we see, using Lemma~\ref{3_1},  that:
				\begin{eqnarray}
					w &=& \eta c_n  (h_{n-1}(j, j+1)a_{n})^{h-1}
					 \lfloor n, n+1-(h-1) \rfloor  
					  \nonumber \\
					 &=& \eta \  \psi \left[  (h_{n-1}(j, j+1)a_{n})^{h-1} \right] c_n   \lfloor n, n+1-(h-1) \rfloor  
, \nonumber 
				\end{eqnarray}
which is of the first claimed form.     
				
			\item[$(b)$] If $n-j< h-1 $, we write $h-1= m(n-j+1) +r$ 
			with $0\leq r < n-j+1 $ as in Lemma~\ref{3_1} and get:			 
			$$\begin{aligned}
					w &=  \eta c_n \left( \prod^{i=m-1}_{i=0}  \psi^{i} \left[ (h_{n-1}(j, j+1)a_{n})^{n-j}\lceil  j,n-1 \rceil 	 \right] \right) \psi^{m}\big[(h_{n-1}(j, j+1)a_{n})^{r}\big] \\						 
					&\qquad \qquad \qquad 	c_n^{m}  \lfloor n,n+1-r \rfloor   \\
					 &=   \eta  \left(  \prod^{i=m}_{i=1}  \psi^{i} \left[ (h_{n-1}(j, j+1)a_{n})^{n-j}\lceil  j,n-1 \rceil 	 \right] \right) \psi^{m+1}\big[(h_{n-1}(j, j+1)a_{n})^{r}\big]  \\						 
				&\qquad \qquad \qquad		c_n^{m+1}  \lfloor n,n+1-r \rfloor  
\end{aligned} 			$$
which has indeed  the required  form (first form in the theorem).  	\\  		
		\end{itemize}

		We are left with  with the second main case  $ r_{1} = n+1 $:   the element   under study  has a normal  form $ w= \lfloor i_1,1 \rfloor a_{n+1} w' $ with $0\leq i_{1} \le  n$. In fact  $ i_{1} = n $ is the case of positive powers of $c_n =\sigma_{n} \dots \sigma_{1}a_{n+1}$ that have the first form in the theorem. For 
		$0\leq i_{1} <  n$ the element $w'$ actually belongs to  one of the previous cases thus we get the second form in the theorem.  	\end{proof} 	

Using the natural surjection of 		 $ B(\tilde A_{n} ) $  onto $ W(\tilde A_{n} ) $,  we deduce from 
Theorem~\ref{2_5_2} two possible forms for fully commutative elements in $ W(\tilde A_{n} ) $. In the above  notation, the image of $v \in B(A_n)$ belongs to $W(A_n)$ and, 
considering the left classes of $W(A_{n-1})$ in $W(A_{n})$ as in   \cite[p.1288]{St}, this image  can be written uniquely as a product 
$v'   \sigma_{n}\sigma_{n-1} \dots  \sigma_{s} $ with $v' \in W(A_{n-1})$ and  $1 \le s \le n+1$ (with the convention that for $s=n+1$, 
the product $\sigma_{n}\sigma_{n-1} \dots  \sigma_{s}$ is   $1$). Since $c_n$ normalizes  $W(\tilde A_{n-1})$ 
the element $v'$ can be moved to the left of $c_n^t$ into an element that incorporates with the image of $u$. 
We obtain:

		\begin{corollary} Let $ w $ be a fully commutative element in $ W(\tilde A_{n} ) $ for  $ n \ge 2 $.
		Then  $ w$ can be written in one  and only one of the following two forms: 
		
			\begin{eqnarray}
				& & d \,  c_n^{t} \sigma_{n}\sigma_{n-1} \dots  \sigma_{s}   \nonumber \\
				\text{or }  \    \lfloor i,1 \rfloor a_{n+1} \, 
				& &  d  \,   c_n^{t} \sigma_{n}\sigma_{n-1} \dots  \sigma_{s},  \nonumber \\  \nonumber
			\end{eqnarray}
	where $d $ is in $W(\tilde A_{n-1} )$ and $t$, $s$  and $i$ are integers with $t \ge 0$, $1 \le s \le n+1$  and $ 0 \leq i  \leq n-1$. \\  
		\end{corollary}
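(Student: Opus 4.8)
The plan is to lift $w$ to the braid group, invoke Theorem~\ref{2_5_2}, and then transport the resulting factorisation back down through the natural surjection $\pi\colon B(\tilde A_{n})\to W(\tilde A_{n})$. Concretely, since $w$ is fully commutative its canonical lift $g(w)$ is a fully commutative braid, so Theorem~\ref{2_5_2} writes $g(w)$ in exactly one of the two shapes $u\,c_n^{t}\,v$ or $\lfloor i,1\rfloor a_{n+1}\,u\,c_n^{t}\,v$, with $u\in B(\tilde A_{n-1})$, $v\in B(A_n)$, $t\ge 0$ and $0\le i\le n-1$. Applying $\pi$ to both sides of this braid identity, and using $\pi\circ g=\mathrm{id}$ so that the left-hand side returns $w$, gives the same shape for $w$ in $W(\tilde A_{n})$, where now $c_n$ denotes its image $\sigma_n\sigma_{n-1}\dots\sigma_1 a_{n+1}$, while $\pi(u)\in W(\tilde A_{n-1})$ and $\pi(v)\in W(A_n)$.

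The two remaining ingredients are a parabolic coset decomposition and the normalisation property. First I would decompose $\pi(v)\in W(A_n)$ against the minimal coset representatives of $W(A_{n-1})$ in $W(A_n)$, namely the elements $\sigma_n\sigma_{n-1}\dots\sigma_s$ for $1\le s\le n+1$ (with the empty product equal to $1$ when $s=n+1$): this yields a unique factorisation $\pi(v)=v'\,\sigma_n\sigma_{n-1}\dots\sigma_s$ with $v'\in W(A_{n-1})$. Next, since $v'\in W(A_{n-1})\subseteq W(\tilde A_{n-1})$, the normalisation relation of Lemma~\ref{Dynkin}, read in $W(\tilde A_{n})$, gives $c_n^{t}\,v'=\psi^{t}[v']\,c_n^{t}$, so $v'$ can be carried to the left of $c_n^{t}$ and merged with $\pi(u)$ into a single element $d=\pi(u)\,\psi^{t}[v']$ of $W(\tilde A_{n-1})$, using that $\psi$ is an automorphism of $W(\tilde A_{n-1})$. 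This produces exactly the two asserted forms $d\,c_n^{t}\,\sigma_n\dots\sigma_s$ and $\lfloor i,1\rfloor a_{n+1}\,d\,c_n^{t}\,\sigma_n\dots\sigma_s$.

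For the ``one and only one'' clause I would argue that each datum is forced by $w$. The dichotomy between the two shapes is inherited from Theorem~\ref{2_5_2}, which itself reads off the unique normal form of Theorem~\ref{FC}: the second shape arises precisely when the leading term of that normal form has $r_1=n+1$ with $i_1<n$, a property of $w$ alone; the integer $s$ is fixed by uniqueness of the minimal coset representative of $\pi(v)$, and the prefix integer $i$, when present, is read directly from the normal form. The main obstacle I anticipate is disentangling $d$ from $t$, because $d$ itself carries affine length (each $a_n$ in $d$ contributes one $a_{n+1}$ after the embedding), so $t$ is \emph{not} simply $L(w)$ and the naive length bookkeeping is circular. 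I would sidestep this by observing that the whole construction is a composite of deterministic, individually injective steps---the unique normal form of Theorem~\ref{FC}, the deterministic reduction in the proof of Theorem~\ref{2_5_2}, and the unique parabolic coset decomposition---so the triple $(d,t,s)$, together with the choice of shape and of $i$, is recovered unambiguously from $w$; once the shape, $t$ and $s$ are pinned down, $d$ is forced by cancellation. This reduces the uniqueness to the uniqueness already secured upstream rather than to a fresh affine-length computation.
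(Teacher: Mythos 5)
Your proposal follows essentially the same route as the paper's proof: project the factorisation of Theorem~\ref{2_5_2} down through the natural surjection $B(\tilde A_{n})\to W(\tilde A_{n})$, decompose the image of $v$ uniquely as $v'\,\sigma_{n}\sigma_{n-1}\dots\sigma_{s}$ against the coset representatives of $W(A_{n-1})$ in $W(A_{n})$, and use the fact that $c_n$ normalizes $W(\tilde A_{n-1})$ to absorb $v'$ into $d$. Your additional care with the uniqueness bookkeeping (tracing each datum back to the uniqueness already secured in Theorems~\ref{FC} and~\ref{2_5_2}) only makes explicit what the paper leaves implicit, so the two arguments coincide.
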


\section{The tower of fully commutative elements}

The injection of braid groups $R_{n}: B(\tilde A_{n-1} )  \longrightarrow  B(\tilde A_{n} )$ of 
Lemma~\ref{braidinjection} induces   the group homomorphism: 
								\begin{eqnarray}
				R_{n}: W(\tilde A_{n-1} ) &\longrightarrow& W(\tilde A_{n} )\nonumber\\
				\sigma_{i} &\longmapsto& \sigma_{i} \text{ for } 1\leq i\leq n-1\nonumber\\
				a_{n} &\longmapsto& \sigma_{n} a_{n+1}\sigma_{n}.  \nonumber
			\end{eqnarray}
			\begin{lemma}\label{Winjection}
The morphism 		$R_{n}: W(\tilde A_{n-1} )  \longrightarrow  W(\tilde A_{n} ) $  is  an injection.  		
			\end{lemma}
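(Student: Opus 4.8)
The plan is to realize $R_n$ inside the geometric (reflection) representation of $W(\tilde A_n)$ attached to its Coxeter matrix, which is faithful by Tits' theorem \cite[Ch.~V, \S4]{Bourbaki_1981}, and to exhibit an invariant subspace on which $R_n$ induces the geometric representation of $W(\tilde A_{n-1})$. Let $V$ be the geometric representation space of $W(\tilde A_n)$, with basis the simple roots $\alpha_0, \alpha_1, \dots, \alpha_n$ (where $\alpha_0$ corresponds to $a_{n+1}$ and $\alpha_i$ to $\sigma_i$) and symmetric bilinear form $B$. Since $R_n$ is already known to be a homomorphism, I only need to prove injectivity.

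First I would identify the images of the generators as reflections. The $\sigma_1, \dots, \sigma_{n-1}$ act as $s_{\alpha_1}, \dots, s_{\alpha_{n-1}}$, while, setting $\beta = \alpha_0 + \alpha_n$ and using that $\alpha_0, \alpha_n$ are adjacent (so $s_{\alpha_n}(\alpha_0) = \alpha_0 + \alpha_n = \beta$), the element $R_n(a_n) = \sigma_n a_{n+1}\sigma_n$ acts as $s_{\alpha_n} s_{\alpha_0} s_{\alpha_n} = s_\beta$. Thus $R_n(W(\tilde A_{n-1}))$ is generated by the reflections $s_{\alpha_1}, \dots, s_{\alpha_{n-1}}, s_\beta$. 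Next I would introduce $U = \operatorname{span}(\alpha_1, \dots, \alpha_{n-1}, \beta)$, an $n$-dimensional subspace, and check that each generating reflection preserves $U$ (each sends a basis vector of $U$ to a combination of itself and the reflecting root, all lying in $U$); hence $U$ is invariant and restriction yields a representation $w \mapsto R_n(w)|_U$ of $W(\tilde A_{n-1})$ on $U$.

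The crux is the Gram matrix of $B|_U$ in the basis $(\alpha_1, \dots, \alpha_{n-1}, \beta)$: a direct computation gives $B(\beta,\beta) = B(\alpha_i,\alpha_i)$, nonzero bonds only between $\beta$ and each of $\alpha_1, \alpha_{n-1}$, and all other cross terms vanishing, so that this matrix is exactly the Coxeter form of type $\tilde A_{n-1}$ for the cyclic diagram on $\alpha_1, \dots, \alpha_{n-1}, \beta$. Here I would flag the only delicate point: when $n = 2$ the root $\alpha_n = \alpha_2$ is itself adjacent to $\alpha_1$, so the bond between $\beta$ and $\alpha_1$ is the one of type $\tilde A_1$ rather than the value occurring for $n \ge 3$; the computation must be arranged to recover the correct $\tilde A_1$ form in this degenerate case. (The case $n=1$ is trivial.)

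Finally, the linear map carrying the standard basis of the geometric representation space of $W(\tilde A_{n-1})$ to $(\alpha_1, \dots, \alpha_{n-1}, \beta)$ is an isometry by the Gram matrix computation and intertwines the generating reflections; the assignment $\sigma_i \mapsto s_{\alpha_i}|_U$, $a_n \mapsto s_\beta|_U$ therefore becomes, after transport along this isometry, the geometric representation of $W(\tilde A_{n-1})$. Since $w \mapsto R_n(w)|_U$ and this geometric representation are both homomorphisms of $W(\tilde A_{n-1})$ agreeing on the generators, they coincide. That representation is faithful, so $R_n(w)|_U = \operatorname{id}$ forces $w = 1$; a fortiori $R_n(w) = 1$ forces $w = 1$, which is the claim. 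The main obstacle is thus purely the Gram matrix bookkeeping together with the uniform treatment of the small case $n = 2$.
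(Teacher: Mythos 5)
Your proof is correct, and it takes a genuinely different route from the paper's. The paper argues combinatorially: it identifies $W(\tilde A_{n})$ with the group ${}^{n+1}_{0}\mathbb{Z}$ of $(n+1)$-periodic permutations of $\mathbb{Z}$ with total shift zero, embeds ${}^{n}_{0}\mathbb{Z}$ into ${}^{n+1}_{0}\mathbb{Z}$ by inserting the fixed points $k(n+1)$ (via the map $\phi(i+kn)=i+k(n+1)$, checking that the total shift stays zero), and then verifies on generators that this embedding sends ${}^{n}s_i \mapsto {}^{n+1}s_i$ for $i\le n-1$ and ${}^{n}s_n \mapsto {}^{n+1}s_n\,{}^{n+1}s_{n+1}\,{}^{n+1}s_n$, i.e.\ that it coincides with $R_n$, which is therefore injective. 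You instead realize the image of $R_n$ as a reflection subgroup inside the Tits representation of $W(\tilde A_{n})$: since $R_n(a_n)=\sigma_n a_{n+1}\sigma_n$ acts as $s_\beta$ with $\beta=s_{\alpha_n}(\alpha_0)=\alpha_0+\alpha_n$, the span $U$ of $\alpha_1,\dots,\alpha_{n-1},\beta$ is invariant, its Gram matrix is exactly the Coxeter form of type $\tilde A_{n-1}$ (with the $m=\infty$ bond $B(\beta,\alpha_1)=-\tfrac12-\tfrac12=-1$ appearing precisely when $n=2$, a degeneration you rightly flag and which does come out correctly), and faithfulness of the geometric representation of $W(\tilde A_{n-1})$ concludes. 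Both arguments are complete; the delicate steps in yours (the conjugation formula $s_{\alpha_n}s_{\alpha_0}s_{\alpha_n}=s_{s_{\alpha_n}(\alpha_0)}$, the isometry-intertwining transport, linear independence of $\alpha_1,\dots,\alpha_{n-1},\beta$) are all standard and check out. Your approach buys conceptual clarity and generality — it is an instance of the theory of reflection subgroups, and the same scheme would handle any embedding whose generators are conjugates of simple reflections — at the price of Gram-matrix bookkeeping and the case split at $n=2$; the paper's approach buys an elementary, explicit description of $R_n$ as an operation on affine permutations, which meshes with the combinatorial normal-form machinery used throughout the rest of the paper.
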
 
\begin{proof}
This fact is most easily seen using the description of $W(\tilde A_{n})$ as the group of 
$(n+1)$-periodic permutations of $ \mathbb{Z}$ with total shift equal to 0, which we briefly recall.  
			A permutation 	$u$ of $ \mathbb{Z}$  is  $m$-{\rm periodic}   if 
			$$u(i+m) = u(i) +m  \text{ for any } i \in  \mathbb{Z} .$$
			 We define the total shift of an $m$-periodic permutation  	$u$    to be:
				\begin{eqnarray}
					\frac{1}{m} \sum\limits^{i=m}_{i=1} \big(u(i) - i\big).\nonumber
				\end{eqnarray}		 
We set $ ^{m}_{0}\mathbb{Z}$ to be the set of $m$-periodic permutations with total shift equal to 0. It forms a subgroup of the group of permutations of $\mathbb{Z} $. Let $i$ be an integer such that $ 1 \leq i\leq m  $ and let $ ^ms_i$ 
be  the $m$-periodic permutation defined by:  $^ms_i (k)= k+1$ for 
$k \equiv i $ (mod $m$), $^ms_i (k)= k-1$ for $k \equiv i+1 $ (mod $m$), 
$^ms_i (k)= k $ for $k \not\equiv i, i+1$ (mod $m$). 
 	Then 
	\begin{eqnarray}\label{Lusztig} 
			 \sigma_{i} \  &\longmapsto& \    ^{n+1}s_i  \quad  \text{ for } 1\leq i\leq n   
			 \nonumber\\
				a_{n+1} \   &\longmapsto& \   ^{n+1}s_{n+1}  
			\end{eqnarray}
is an isomorphism from $W(\tilde A_{n})$ onto $ ^{n+1}_{0}\mathbb{Z}$ (see e.g. \cite[Proposition 3.2]{Digne}).  \\ 

It is easy to check
 that $ ^{n}_{0}\mathbb{Z}$ injects into $ ^{n+1}_{0}\mathbb{Z}$ as follows. We define an injection 
 $\phi: \mathbb Z \longrightarrow \mathbb Z$ by letting $\phi(i+ kn)= 
 i+k(n+1)$ for $1\leq i\leq n $ and $k \in \mathbb Z$. We then map an $n$-periodic permutation $v$ to the $(n+1)$-periodic permutation $v'$ defined 
 by $v'(\phi(x))= \phi(v(x))$ ($x \in \mathbb Z$) and 
 $v'(k(n+1))= k(n+1)$ ($k \in \mathbb Z$). For $v \in  ^{n}_{0}\mathbb{Z}$ and 
  $1\leq i\leq n $, write $v(i)= j_i + k_i n$ with $1\leq j_i\leq n $;   the total shift of $v$ is  $(\sum\limits^{i=n}_{i=1} k_i )+ \frac{1}{n } \sum\limits^{i=n}_{i=1} \left(j_i - i\right) $.  
Since $v$ is an $n$-periodic permutation   it induces a permutation mod $n$,  hence   $ \sum\limits^{i=n}_{i=1}  j_i    = \sum\limits^{i=n}_{i=1}    i $ and consequently 
    $ \sum\limits^{i=n}_{i=1} k_i =0$. 
  Now  
 the total shift of $v'$ is   
 $$
 \begin{aligned}
 \frac{1}{n+1} \sum\limits^{i=n+1}_{i=1} \left(v'(i) - i\right) &= 			
 	\frac{1}{n+1}\left(  \sum^{i=n}_{i=1} ( \phi(v(i)) - i)\right) + \frac{1}{n+1} (n+1 - (n+1)) 
	\\
	&=  \frac{1}{n+1}  \sum^{i=n}_{i=1} ( j_i + k_i (n+1) - i)   = 0
	\end{aligned}
	$$  
	    This injection maps $^{n }s_i $ to $^{n+1}s_i $  for $1 \leq i \leq n-1$, and   $^{n }s_n$   to $\  ^{n+1 }s_n  \ ^{n+1 }s_{n+1 }   \    ^{n+1 }s_n$. Going back to 
    $W(\tilde A_{n-1})$ and $W(\tilde A_{n})$  through the isomorphisms (\ref{Lusztig}) above, we find the morphism $R_n$, hence itself  an injection.
\end{proof}

The Coxeter group $W(A_{n-1})$ with    Coxeter generators $(\sigma_1, \cdots, \sigma_{n-1})$ is  a parabolic subgroup of $W(A_{n})$. This is no longer the case for $W(\tilde A_{n-1})$ and $W(\tilde A_{n})$,  indeed proper parabolic subgroups of   $W(\tilde A_{n})$ are finite. This 
  is an important difficulty when dealing with the affine case.  $W(  A_{n})$
though, with    Coxeter generators $(\sigma_1, \cdots, \sigma_{n})$,   is a parabolic subgroup of  $W(\tilde A_{n})$ and fully commutative elements 
of $W(  A_{n})$ are fully commutative in $W(\tilde A_{n})$ as well. 
As for $W(\tilde A_{n-1})$, the   injection $R_n: 					  W(\tilde A_{n-1}) \longrightarrow  W(\tilde A_{n}) $ of Lemma~\ref{Winjection} is a group homomorphism,  but it does not preserve full commutativity, 
as can be seen directly on the image of $a_n$, namely $\sigma_{n} a_{n+1}\sigma_{n}$.   When dealing with fully commutative elements, the notion of homomorphism of groups may thus become irrelevant. We introduce the following maps:  \\

\begin{theorem} \label{IJ} For  $w \in W^c(\tilde A_{n-1} )$, let  $I(w)$  (resp.  $J(w)$) be the 
element of $W (\tilde A_{n} ) $ obtained by substituting 
$\sigma_n a_{n+1}$ (resp. $a_{n+1} \sigma_n $)  to $a_n$ in the normal form for $w$.  
Both expressions are reduced and both $I(w)$ and $J(w)$ are fully commutative, with the same affine length as $w$. 
The maps: 

$$I, J : W^c(\tilde A_{n-1} ) \longrightarrow W^c(\tilde A_{n} ) $$

\medskip\noindent
are injective and satisfy 
  $l(I(w)) =  l(J(w)) = l(w) + L(w)$. Furthermore   
  the images of $I$ and $J$ intersect exactly on $  W^c( A_{n-1} )$.  \\ 
\end{theorem}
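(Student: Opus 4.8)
The plan is to track how the normal form of Theorem~\ref{FC} behaves under the two substitutions, treating $I$ directly and deducing $J$ from an inverse symmetry.

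First I would treat $I$. Applying Theorem~\ref{FC} to $W^c(\tilde A_{n-1})$ (that is, with $n$ replaced by $n-1$), write $w$ in its normal form and record the elementary identity $h_{n-1}(i,r)\,\sigma_n = h_n(i,r)$, valid for all $0\le i<r\le n$: both sides equal $\lfloor i,1\rfloor\,\sigma_r\cdots\sigma_n$, with the convention that the $\lceil\cdot\rceil$-part is empty when $r=n$. Hence the substitution $a_n\mapsto\sigma_n a_{n+1}$ turns each factor $h_{n-1}(i_t,r_t)\,a_n$ into $h_n(i_t,r_t)\,a_{n+1}$ and each $h_{n-1}(j,j+1)\,a_n$ into $h_n(j,j+1)\,a_{n+1}$, so that $I(w)$ is \emph{literally} a word of the shape \eqref{formefinale} with the same parameters $p,k,j,(i_t,r_t)$ and the same tail $w_r\in W^c(A_{n-1})\subseteq W^c(A_n)$. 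Because the constraints attached to the $\tilde A_{n-1}$ normal form ($r_1\le n$, $j\le n-1$, indices of $w_r$ at most $n-1$) are exactly the $\tilde A_n$ constraints specialized to smaller bounds, these parameters still satisfy the hypotheses of Theorem~\ref{FC}; its converse part then gives at once that the word is reduced and fully commutative, of affine length $p+k=L(w)$ and length $l(w)+L(w)$, since each of the $L(w)$ letters $a_n$ is replaced by two letters. Injectivity of $I$ is immediate from uniqueness of the normal form, as the parameters of $I(w)$ recover those of $w$.

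Next I would deduce the statements for $J$ from a reflection identity. The key preliminary is that each substitution respects commutation relations: the only commutations involving $a_n$ in $W(\tilde A_{n-1})$ are $a_n\sigma_i=\sigma_i a_n$ for $2\le i\le n-2$, and in $W(\tilde A_n)$ both $a_{n+1}\sigma_n$ and $\sigma_n a_{n+1}$ commute with exactly these $\sigma_i$. Since $w$ is fully commutative, all its reduced expressions form a single commutation class, so the substitution produces the same element whichever reduced expression of $w$ we feed it; in particular we may feed it the reverse of the normal form of $w$. As reversing-and-inverting a word is compatible with the substitution, via $(\sigma_n a_{n+1})^{-1}=a_{n+1}\sigma_n$ and the fact that every generator is an involution, this yields $J(w)=I(w^{-1})^{-1}$. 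Because $w\mapsto w^{-1}$, the functions $l$, $L$, and full commutativity are all inverse-invariant, every assertion proved for $I$ transfers to $J$: the defining word of $J(w)$ is reduced, $J(w)$ is fully commutative of affine length $L(w)$ and length $l(w)+L(w)$, and $J$ is injective.

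Finally, for the intersection of the images I would use the invariant that for a fully commutative element the relative order of the occurrences of two \emph{non-commuting} generators is the same in every reduced expression, since a commutation move can never transpose them. I would apply this to the pair $\sigma_n,a_{n+1}$, which satisfies a braid relation. The description of $I(w)$ above shows that each $\sigma_n$ sits immediately to the left of an $a_{n+1}$ and that the two generators occur the same number $p+k$ of times, so the induced $\{\sigma_n,a_{n+1}\}$-subword of $I(w)$ is $(\sigma_n a_{n+1})^{m}$ with $m=L(w)$; symmetrically the defining word of $J(w)$ gives the subword $(a_{n+1}\sigma_n)^{m}$. If an element lay in both images with $m\ge 1$, these two invariant subwords would have to coincide, which is impossible as they begin with different letters. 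Hence any common value has $m=0$, so it contains no $a_{n+1}$ and, by equality of the counts, no $\sigma_n$, i.e.\ it lies in $W^c(A_{n-1})$; conversely every element of $W^c(A_{n-1})$ has affine length $0$ and is fixed by both substitutions, so there $I=J$ and the element lies in both images. I expect the main obstacle to be exactly this last step: one must be certain the interleaving pattern is a genuine invariant and that the normal form of $I(w)$ pairs each $\sigma_n$ with a following $a_{n+1}$ with no unpaired copies, which is precisely why the identity $h_{n-1}(i,r)\,\sigma_n=h_n(i,r)$ and the generator count $p+k$ are needed.
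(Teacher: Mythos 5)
Your proof is correct, but it takes a genuinely different route from the paper on the $J$ half, so let me compare. For $I$ you do essentially what the paper does: the substitution $a_n\mapsto\sigma_n a_{n+1}$ sends each factor $h_{n-1}(i,r)\,a_n$ to $h_n(i,r)\,a_{n+1}$, so $I(w)$ is verbatim a word of the shape (\ref{formefinale}) whose parameters satisfy the $\tilde A_n$ constraints, and the converse and uniqueness clauses of Theorem~\ref{FC} yield reducedness, full commutativity, the length count and injectivity. For $J$, however, the paper recomputes the normal form of $J(w)$ from scratch: it establishes identities such as $J((h_{n-1}(j,j+1)\,a_n)^k)=\lfloor j,1\rfloor a_{n+1}\left(h_n(j+1,j+2)\,a_{n+1}\right)^{k-1}\lceil j+1,n\rceil$ and then runs a right-to-left case analysis ($k>0$ with $j<r_p-1$, $k>0$ with $j=r_p-1$, and $k=0$) to exhibit the normal form of $J(w)$ explicitly. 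You bypass all of this with the symmetry $J(w)=I(w^{-1})^{-1}$, which rests on your observation that both substitutions respect commutation moves (since $\sigma_n a_{n+1}$ and $a_{n+1}\sigma_n$ commute letterwise with $\sigma_i$ for $2\le i\le n-2$) and hence are well defined on the single commutation class of reduced expressions of a fully commutative element; since $l$, $L$ and full commutativity are inverse-invariant, every assertion about $J$ then follows from the corresponding one about $I$. Your route is shorter and more conceptual, and it proves in passing that $I$ and $J$ can be computed from any reduced expression, not only the normal form; what it gives up is the explicit normal form of $J(w)$ inside $W^c(\tilde A_n)$, a by-product of the paper's computation (though nothing in the statement requires it, and the later Proposition~\ref{formula} uses only facts both approaches deliver). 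Your intersection argument --- invariance under commutation moves of the $\{\sigma_n,a_{n+1}\}$-subword, giving $(\sigma_n a_{n+1})^m$ versus $(a_{n+1}\sigma_n)^m$ --- is the same idea the paper invokes (a $\sigma_n$ to the left of the first $a_{n+1}$ persists in all reduced expressions), just spelled out more completely, including the needed pairing of each $\sigma_n$ with a following $a_{n+1}$ in $I(w)$.
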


\begin{proof} We use the notation in Section~\ref{notations} with an additional index $n$ 
in $W (\tilde A_{n} ) $ and $n-1$ in $W (\tilde A_{n-1} ) $. 
We then see that  
$ 
I(h_{n-1}(i,j) \, a_n)= h_n(i,j) \, a_{n+1} $.  \\ 

Let $w \in W^c(\tilde A_{n-1} )$. 
By inspection of the normal forms in $  W^c(\tilde A_{n-1} ) $ and $ W^c(\tilde A_{n} ) $ 
given in Theorem~\ref{FC} one sees directly that $I(w)$  is the normal form of a fully commutative element in  $ W(\tilde A_{n} ) $ and that this process is injective. Indeed we have: 
$$
\begin{aligned}   
I( h_{n-1}(i_1, r_1) \; a_{n} \dots &h_{n-1}(i_p, r_p) \;   a_{n}    (h_{n-1}(j,j+1) \; a_{n} )^k  
\;    w_r  )  \\
&=    h_{n}(i_1, r_1) \; a_{n+1} \dots h_{n}(i_p, r_p) \;  a_{n+1}    (h_{n}(j,j+1) \; a_{n+1} )^k  
\;    w_r. 
\end{aligned} $$

\medskip
As for $J(w)$,   the corresponding normal form is obtained as follows. We first observe that 
$$
J(h_{n-1}(j,j+1) \; a_{n })=   \lfloor j,1 \rfloor  \lceil  j+1,n-1 \rceil  a_{n+1} \sigma_n 
=   \lfloor j,1 \rfloor a_{n+1}   \lceil  j+1,n  \rceil  ,  
$$
which implies,  since $\sigma_j, \dots,\sigma_1$ commute  with $\sigma_{j+2}, \dots, \sigma_n$: 
$$
\begin{aligned} 
J((h_{n-1}(j,j+1) \; a_{n })^2)      
&=   \lfloor j,1 \rfloor a_{n+1}   \lceil  j+1,n  \rceil    \lfloor j,1 \rfloor a_{n+1}   \lceil  j+1,n  \rceil \\ &=   \lfloor j,1 \rfloor a_{n+1}   \lfloor j+1,1 \rfloor \lceil  j+2,n  \rceil    a_{n+1}   \lceil  j+1,n  \rceil ,
\end{aligned}
$$
 and 
inductively:
$$
\begin{aligned} 
J((h_{n-1}(j,j+1) \; a_{n })^k)      
&=      \lfloor j,1 \rfloor a_{n+1}  \left( h_n( j+1,  j+2)    a_{n+1} \right)^{k-1}  \lceil  j+1,n  \rceil . 
\end{aligned}
$$

\medskip 
For $k>0$ we thus get 
\begin{equation}\label{Jw1}
  J((h_{n-1}(j,j+1) a_{n } )^k  
     w_r ) =   h_n(j,n\!+\!1)   a_{n+1}  \left( h_n( j\!+\!1,   j\!+\!2)    a_{n+1} \right)^{k-1}  \lceil  j\!+\!1,n  \rceil w_r 
\end{equation}
\noindent 
and we observe that either $w_r=1$, in which case  $\lceil  j+1,n  \rceil$ has the shape required by Theorem \ref{FC}, or  $w_r=  \lfloor j, d_1 \rfloor \lfloor j+1, d_2 \rfloor \dots  \lfloor j+z-1, d_{z} \rfloor $, in which case  
$$ \lceil  j+1,n  \rceil w_r =  \lfloor j+1, d_1 \rfloor \lfloor j+2, d_2 \rfloor \dots  \lfloor j+z, d_{z} \rfloor \lceil  j+z+1,n  \rceil $$
which is again the shape of the rightmost element required in Theorem~\ref{FC}. 
The expression (\ref{Jw1}) is thus the normal form of $J(w)$ when $p=0$ in Theorem~\ref{FC}
(\ref{formefinale}). \\

If $p$ is positive in  Theorem~\ref{FC}
(\ref{formefinale}), we have:

$$\begin{aligned}
J( h_{n-1}(i_p, r_p) \;   a_{n}  ) \lfloor j,1 \rfloor   a_{n+1} &=  
 \lfloor i_p,1 \rfloor  \lceil  r_p ,n-1 \rceil  a_{n+1} \sigma_n  \lfloor j,1 \rfloor a_{n+1}     \\
 &=   \lfloor i_p,1 \rfloor  a_{n+1} \lceil  r_p ,n \rceil  \lfloor j,1 \rfloor   a_{n+1}
 . 
 \end{aligned}
$$
Thus  if $j < r_p-1$: 
$$\begin{aligned}
J( h_{n-1}(i_p, r_p) \;   a_{n}  ) \lfloor j,1 \rfloor   a_{n+1}&=  
   \lfloor i_p,1 \rfloor  a_{n+1} \lfloor j,1 \rfloor   \lceil  r_p ,n \rceil   a_{n+1}
  \\
 &= \lfloor i_p,1 \rfloor  a_{n+1} h_n( j ,   r_p)    a_{n+1}  
 \end{aligned}
$$
while if $j = r_p-1$: 
$$\begin{aligned}
J( h_{n-1}(i_p, r_p) \;   a_{n}  ) \lfloor j,1 \rfloor   a_{n+1}&=  
   \lfloor i_p,1 \rfloor  a_{n+1} \lfloor r_p,1 \rfloor   \lceil  r_p+1 ,n \rceil   a_{n+1}
   \\ 
 &=   \lfloor i_p,1 \rfloor  a_{n+1}   h_n( j+1,   j+2)    a_{n+1}  
  . 
 \end{aligned}
$$

We proceed in the same way from right to left, noticing that when going from $(i_t, r_t)$ 
to $(i_{t-1}, r_{t-1})$ the case $i_t = r_{t-1}-1$ cannot occur, so we get: 
$$\begin{aligned}
J( h_{n-1}(i_{t-1}, r_{t-1}) \;   a_{n}  ) \lfloor i_t,1 \rfloor   a_{n+1}&=  
   \lfloor i_{t-1},1 \rfloor  a_{n+1} h_n( i_t ,   r_{t-1})    a_{n+1} .  
 \end{aligned}
$$

\medskip 
We can now write the normal form of $J(w)$   when $p>0$ in Theorem~\ref{FC}
(\ref{formefinale}): \\

\noindent 
$J ( h_{n-1}(i_1, r_1) \;  a_{n} \dots  h_{n-1}(i_p, r_p) \;   a_{n}    (h_{n-1}(j,j+ 1) \; a_{n} )^k  
\;    w_r  ) \   $ is equal to: 

$$
\begin{aligned} 
   h_{n}(i_1, n\!+\!1)   a_{n+1} \dots  h_{n}(i_p, r_{p-1})   a_{n+1}  h_{n}(j, r_p)   a_{n+1}    (h_{n}&(j\!+\!1,j\!+\!2)   a_{n+1} )^{k-1}  
    \lceil  j\!+\!1,n  \rceil
  w_r  
  \\    &\text{ if }  k > 0 \text{ and } j < r_p-1,
\\
  h_{n}(i_1, n\!+\!1)   a_{n+1} \dots  h_{n}(i_p, r_{p-1})   a_{n+1}       (h_{n}(j\!+\!1,j\!+\!2)  a&_{n+1}  )^{k}  
    \lceil  j\!+\!1,n  \rceil
  w_r  
\\ 
   &\text{ if }  k > 0 \text{ and }  j = r_p-1, 
\\
 h_{n}(i_1, n\!+\!1)   a_{n+1} \dots  h_{n}(i_p, r_{p-1})   a_{n+1}       
    \lceil  r_p,n  \rceil
  w_r  
  \qquad  \quad  &\text{ if }  k = 0.  
\end{aligned}
$$

\noindent 
We see as before, by a suitable right shift of $\sigma_{r_p}, \dots, \sigma_n$,  that for $k=0$ the rightmost term $ \lceil  r_p,n  \rceil
  w_r $ has again the   shape required by Theorem~\ref{FC}. \\ 
 
The fact that   the substitution process adds to the original length the number of occurrences of $a_n$, i.e. the affine length, is clear. As for the intersection of the images, one only needs to notice that if  a reduced expression of a fully commutative element 
contains an element $\sigma_n$ to the left of the first $a_{n+1}$ from left to right, then 
all reduced expressions for this element have the same property.    
\end{proof} 

\medskip
We remark that the injection $I$ is well defined only on the set of fully commutative elements. Indeed, substituting $\sigma_n a_{n+1}$ to $a_n$ in 
the two reduced expressions $\sigma_{n-1} a_n \sigma_{n-1}$ and 
$a_n \sigma_{n-1} a_n$ gives rise to  different elements of $W (\tilde A_{n} ) $. 
It might be the case that $I $ is well defined on the set of elements for which 
the number of occurrences of $a_n$ in any reduced expression is the same, but as we do not need this, we will not  examine it further.

\section{The tower of Temperley-Lieb algebras}\label{TL}

Let   $K$ be an integral domain of characteristic $0$ and  $q$ be an invertible element in $K$. 
 We mean by algebra in what follows $K$-algebra.
 For $x,y$ in a given ring with identity we define $$V(x,y) = xyx+xy+yx+x+y+1.$$

			For $ n\geq 2$, we define $\widehat{TL}_{n+1} (q)$ to be the algebra with unit given by the set of generators  $\left\{g_{\sigma_{1}}, \dots,  g_{\sigma_{n}}, g_{a_{n+1}}\right\}$, with the following relations (see \cite[0.1, 0.5]{Graham_Lehrer_1998}):\\
	\begin{equation}\label{definingrelations} 	
	  \quad \left\{ \quad  \begin{aligned}
		 &g_{\sigma_{i}} g_{\sigma_{j}} =g_{\sigma_{j}} g_{\sigma_{i}}  \text{ for } 1\leq i,j\leq n \text{ and } \left| i-j\right| \geq 2, \\
		 &g_{\sigma_{i}} g_{a_{n+1}} =g_{a_{n+1}} g_{\sigma_{i}}   \text{ for  }  2\leq i \leq n-1, \\
			  &g_{\sigma_{i}}g_{\sigma_{i+1}}g_{\sigma_{i}} = g_{\sigma_{i+1}}g_{\sigma_{i}}g_{\sigma_{i+1}}  \text{ for }  1\leq i\leq n-1,\\
			   &g_{\sigma_{i}}g_{a_{n+1}}g_{\sigma_{i}} = g_{a_{n+1}}g_{\sigma_{i}}g_{a_{n+1}}  \text{ for } i= 1, n , \\
			  &g^{2}_{\sigma_{i}} = (q-1)g_{\sigma_{i}} +q  \text{ for }  1\leq i\leq n,\\
			& g^{2}_{a_{n+1}} = (q-1)g_{a_{n+1}} +q , \\ 
			  &V(g_{\sigma_{i}},g_{\sigma_{i+1}})=V(g_{\sigma_{1}},g_{a_{n+1}}) = V(g_{\sigma_{n}},g_{a_{n+1}})= 0  \text{  for }1\leq i\leq n-1.  
			  \end{aligned} \right.    
		\end{equation}
		
		\smallskip
We set $\widehat{TL}_{1} (q) = K$. For $n=1$, the 		
			  algebra $\widehat{TL}_{2}(q) $ is generated by two elements: $ g_{\sigma_{1}}, g_{a_{2}}$, with only  Hecke quadratic relations. That is: 			\begin{eqnarray}
	 			g_{\sigma_{1}}^{2} &=& (q-1) g_{\sigma_{1}}  +q \quad  \text{ and }  
				\quad g_{a_{2}}^{2} = (q-1) g_{a_{2}} +q.  \nonumber\\\nonumber
			\end{eqnarray}

 Let $ w $ be a fully commutative element in $ W(\tilde A_{n})$. 
 Pick a   reduced expression of $w$, say 
 $w= s_1 \cdots s_k$ with $s_i \in \left\{ {\sigma_{1}}, ...,{\sigma_{n}}, {a_{n+1}}\right\}$ for $1\le i \le k$. Then $g_w: = g_{s_1} \cdots 
 g_{s_k}$ is a well defined element in $\widehat{TL}_{n+1} (q)$ that  does not depend on the choice of a reduced expression  of $w$, and the set $\left\{ g_{w} \ | \  w \in W^{c}(\tilde A_{n} )\right\}$ is   a $K$-basis  
		 of $\widehat{TL}_{n+1} (q)$ \cite[Proposition 1]{Fan_1996}.   The multiplication associated to this basis   satisfies, for $w,v$ in $W^{c}(\tilde A_{n} )$ and $s$ in $\left\{\sigma_{1},..., \sigma_{n}, a_{n+1}\right\}$:		 
		\begin{eqnarray}
			g_{w} g_{v} &=& g_{wv} ~~~~~~~~~~~~~~~~~~~~~~~~\text{whenever }  l(wv) = l(w) + l (v) \text{ and } w v \in W^{c}(\tilde A_{n} ) ,\nonumber\\
			g_{s} g_{w} &=&  (q-1) g_{w} +q g_{sw} ~~~~~~~  \text{whenever } l(sw) = l(w) - 1.  \nonumber
		\end{eqnarray}

		The classical Temperley-Lieb algebra of type $A$ with $n$ generators, $TL_{n}(q)$, can be regarded as the subalgebra of $\widehat{TL}_{n+1} (q)$ generated by  $\left\{g_{\sigma_{1}} ,...,~ g_{\sigma_{n}}\right\}$. A $K$-basis 
		of  $TL_{n}(q)$ is given by  $\left\{ g_{w} \ |  \   w \in W^{c}(A_{n})\right\}$. We set $TL_{0}(q) = K$. \\

	The affine Temperley-Lieb algebra $\widehat{TL}_{n+1} (q)$ is the quotient of the affine 	braid group algebra $K[B(\tilde A_{n} )]  $ by the ideal generated by Relations~(\ref{definingrelations}).  		The injection of braid groups $R_{n}: B(\tilde A_{n-1} )  \longrightarrow  B(\tilde A_{n} )$ of Lemma~\ref{braidinjection}  induces the  injection of group algebras:   	 
	\begin{eqnarray}
					G_{n}: K[B(\tilde A_{n-1} )] &\longrightarrow& K[B(\tilde A_{n} )]  \nonumber\\
					\sigma_{i} &\longmapsto& \sigma_{i}$ ~~~ \text{for} $1\leq i\leq n-1 \nonumber\\
					a_{n} &\longmapsto& \sigma_{n} a_{n+1}\sigma^{-1}_{n}  \nonumber
				\end{eqnarray}	
   which, as we will see shortly,  induces an algebra homomorphism at the Temperley-Lieb level. Yet   the possible lack of injectivity of this homomorphism forces us to use different notations for the generators of $\widehat{TL}_{n}(q)$ and $\widehat{TL}_{n+1}(q)$. In the following proposition we use 
	  $\left\{t_{\sigma_{1}}, ..., t_{\sigma_{n-1}}, t_{a_{n}}\right\}$
as the	set of generators for $\widehat{TL}_{n}(q)$ satisfying relations~(\ref{definingrelations})  (with  $t $ replacing   $g $   and $n$ replacing $n+1$).   
 \\

	\begin{proposition}\label{morphismGn}  
	The injection $G_{n}$ induces the following morphism of algebras:	
			\begin{eqnarray}
				R_{n}: \widehat{TL}_{n}(q) &\longrightarrow& \widehat{TL}_{n+1}(q) \nonumber\\
				t_{\sigma_{i}} &\longmapsto & g_{\sigma_{i}} \text{ for } 1 \leq i \leq n-1 \nonumber\\
				t_{a_{n}} &\longmapsto & g_{\sigma_{n}} g_{a_{n+1}} g^{-1}_{\sigma_{n}}. \nonumber
			\end{eqnarray}
The restriction of $R_n$ to $  TL_{n-1}(q)$ is an injective morphism into $TL_{n}(q)$  and satisfies $R_n(t_w)= g_{I(w)}= g_{J(w)}$  for  $w \in W^c( A_{n-1})$. 
	\end{proposition}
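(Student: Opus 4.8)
The plan is to show that the braid-algebra homomorphism $G_n$ passes to the Temperley-Lieb quotients, and then to read off the injectivity and the identity $g_{I(w)}=g_{J(w)}$ from the canonical basis $\{g_w\}$.

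First I would note that $g_{\sigma_n}$ is invertible in $\widehat{TL}_{n+1}(q)$, with $g_{\sigma_n}^{-1}=q^{-1}(g_{\sigma_n}-(q-1))$ coming from the quadratic relation and the invertibility of $q$; hence $u:=g_{\sigma_n}g_{a_{n+1}}g_{\sigma_n}^{-1}$ makes sense, and the composite $K[B(\tilde A_{n-1})]\xrightarrow{G_n}K[B(\tilde A_n)]\twoheadrightarrow\widehat{TL}_{n+1}(q)$ carries $\sigma_i\mapsto g_{\sigma_i}$ (for $1\le i\le n-1$) and $a_n\mapsto u$. To obtain $R_n$ on $\widehat{TL}_n(q)$ I must check that $g_{\sigma_1},\dots,g_{\sigma_{n-1}},u$ satisfy the relations (\ref{definingrelations}) (with $n$ replaced by $n-1$). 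The braid-type relations — the commutations, the triple relations, and $g_{\sigma_i}u=ug_{\sigma_i}$ for $2\le i\le n-2$ — hold automatically because $G_n$ is a braid homomorphism (Lemma~\ref{braidinjection}), so the corresponding identities already hold in $B(\tilde A_n)$ and a fortiori in any quotient. The quadratic relations for the $g_{\sigma_i}$ hold in $\widehat{TL}_{n+1}(q)$, and $u^2=(q-1)u+q$ follows by conjugating the relation for $g_{a_{n+1}}$ by $g_{\sigma_n}$. This leaves only the two $V$-relations $V(g_{\sigma_1},u)=0$ and $V(g_{\sigma_{n-1}},u)=0$ (absent when $n=2$, where only quadratic relations occur, so that case is immediate).

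These two $V$-relations are the crux, since they are genuine relations of the Temperley-Lieb quotient and do not follow from the braid homomorphism. The first is easy: for $n\ge 3$ the generator $g_{\sigma_1}$ commutes with $g_{\sigma_n}$, so $V(g_{\sigma_1},u)=g_{\sigma_n}\,V(g_{\sigma_1},g_{a_{n+1}})\,g_{\sigma_n}^{-1}=0$ by the defining relation $V(g_{\sigma_1},g_{a_{n+1}})=0$. The relation $V(g_{\sigma_{n-1}},u)=0$ is the main obstacle, because $g_{\sigma_{n-1}}$ braids with $g_{\sigma_n}$ and the conjugator cannot be pulled out. To handle it I would pass to the renormalized generators $F_x:=g_x+1$, which satisfy $F_x^2=(q+1)F_x$ and, for adjacent pairs, the Temperley-Lieb relation $F_xF_yF_x=qF_x$; a short computation shows that the quadratic relation on $x$ turns $V(x,y)$ into $F_xF_yF_x-qF_x$, so every $V$-relation becomes an identity $F_xF_yF_x=qF_x$. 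Writing $R=g_{\sigma_{n-1}}+1$, $S=g_{\sigma_n}+1$, $B=g_{a_{n+1}}+1$, I would first expand $u$ in these generators, obtaining $u+1=S+B-SB-q^{-1}BS$, and then compute $R(u+1)R$ using only $RSR=qR$, $RB=BR$ and $R^2=(q+1)R$; the four resulting terms collapse to $qR$, which is exactly $V(g_{\sigma_{n-1}},u)=0$.

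Finally, the last two assertions follow from the basis. On $TL_{n-1}(q)$, generated by $t_{\sigma_1},\dots,t_{\sigma_{n-1}}$ alone, $R_n$ sends $t_w$ to $g_w$ for every $w\in W^c(A_{n-1})$; since $W^c(A_{n-1})\subseteq W^c(A_n)$ and $\{g_v : v\in W^c(A_n)\}$ is a $K$-basis of $TL_n(q)$, these images form a subset of a basis, hence are linearly independent, so the restriction is an injection into $TL_n(q)$. For the equality $R_n(t_w)=g_{I(w)}=g_{J(w)}$, I would observe that an element $w\in W^c(A_{n-1})$ has affine length $0$, so $a_n$ does not occur in its normal form; the substitutions defining $I$ and $J$ (Theorem~\ref{IJ}) then leave $w$ fixed, giving $I(w)=J(w)=w$ in $W^c(A_n)$ and therefore $R_n(t_w)=g_w=g_{I(w)}=g_{J(w)}$.
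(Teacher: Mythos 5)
Your proposal is correct, and its overall skeleton coincides with the paper's: descend $G_n$ to the quotient by checking the defining relations on the images, note the braid-type relations come for free from Lemma~\ref{braidinjection}, get the quadratic relation for $u=g_{\sigma_n}g_{a_{n+1}}g_{\sigma_n}^{-1}$ and the relation $V(g_{\sigma_1},u)=0$ by conjugation (possible since $\sigma_1$ commutes with $\sigma_n$ for $n\ge 3$), and settle the final assertions via the parabolic inclusion $W^c(A_{n-1})\subseteq W^c(A_n)$ and the basis $\{g_w\}$ (your version of this last point is just a spelled-out form of the paper's one-line remark). Where you genuinely differ is the crux, $V(g_{\sigma_{n-1}},u)=0$. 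The paper kills it with a single conjugation identity,
\begin{equation*}
V(g_{\sigma_{n-1}},u)\;=\;g_{\sigma_n}g_{\sigma_{n-1}}\,V\bigl(g_{\sigma_n},g_{a_{n+1}}\bigr)\,g_{\sigma_{n-1}}^{-1}g_{\sigma_n}^{-1}\;=\;0,
\end{equation*}
which works because conjugation by $(g_{\sigma_n}g_{\sigma_{n-1}})^{-1}$ sends $g_{\sigma_{n-1}}\mapsto g_{\sigma_n}$ (by the braid relation) and $u\mapsto g_{a_{n+1}}$ (since $\sigma_{n-1}$ commutes with $a_{n+1}$), so the problematic pair is carried exactly onto a pair occurring in a defining relation. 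You instead renormalize to $F_x=g_x+1$, use the identity $V(x,y)=F_xF_yF_x-qF_x$ (valid given only the quadratic relation on $x$), expand $u+1=S+B-SB-q^{-1}BS$ (note this expansion already consumes the defining relation $SBS=qS$), and collapse $R(u+1)R$ to $qR$; I verified the four terms ($RSR=qR$, $RBR=(q+1)BR$, $RSBR=qBR$, $q^{-1}RBSR=BR$) and the computation is correct. The trade-off: the paper's argument is a one-line trick that one must spot, while yours is longer but purely mechanical and makes transparent exactly which Temperley--Lieb relations of the target are being used; both are complete proofs, and your explicit handling of the $n=2$ case (where only quadratic relations exist) is a small point the paper leaves implicit.
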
	 
	
\begin{proof} We first have to show that the defining relations (\ref{definingrelations})   
for $\widehat{TL}_{n}(q)$,  with  $t $ replacing   $g $   and $n$ replacing $n+1$,  are preserved for the images in $TL_{n}(q)$. This is immediate for  those relations that do not involve $a_n$,  using directly 
 relations~(\ref{definingrelations}) for $\widehat{TL}_{n+1}(q) $.  For the others, this is easily checked. 
For instance we have:  
$$ \begin{aligned} 
&R_n(V(t_{\sigma_{1}} , t_{a_{n}})) =  g_{\sigma_{n}} 
V(      g_{\sigma_{1}} , g_{a_{n+1}})        g^{-1}_{\sigma_{n}} =0,  \\
 &R_n(V(t_{\sigma_{n-1}} , t_{a_{n}})) =
  g_{\sigma_{n}} g_{\sigma_{n-1}} 
V(      g_{\sigma_{n}} , g_{a_{n+1}})   g^{-1}_{\sigma_{n-1}}      g^{-1}_{\sigma_{n}} =0 . 
\end{aligned}
$$
The last assertion comes from the fact that $W ( A_{n-1})$ is parabolic in 
$W  ( A_{n })$: fully commutative elements of type $A_{n-1}$ embed in those of type $A_n$ and this embedding preserves the normal form. 
\end{proof} 
 
	\begin{remark}	The injection $G_{n}: K[B(\tilde A_{n-1} )]  \longrightarrow  K[B(\tilde A_{n} )]  $ also induces a  homomorphism of   the corresponding Hecke algebras. We have shown in \cite[Proposition 4.3.3]{Sadek_Thesis} that this homomorphism  is injective for $K=\mathbb Z[q, q^{-1}]$ where $q$ is an indeterminate. We will not  need 
this fact in what follows. \\
	\end{remark}

\begin{proposition}\label{formula} 
For any $w \in  W^c(\tilde A_{n-1})$ of positive affine length,  the element $R_n(t_w)$   has the following form: 
$$
R_n(t_w) =   (-1)^{L(w)}   g_{I(w)} +  (-\frac{1}{q})^{L(w)}   g_{J(w)} 
+  \sum_{\begin{smallmatrix} l(x)< l(I(w)) \cr L(x) \le L(w) \end{smallmatrix}}  \alpha_x g_x  \qquad (\alpha_x \in K). 
$$
\end{proposition}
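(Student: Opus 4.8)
The plan is to prove the formula by induction on the affine length $m=L(w)\ge 1$, after first computing the image of the generator $t_{a_n}$ in the basis. From the quadratic relation $g_{\sigma_n}^2=(q-1)g_{\sigma_n}+q$ one gets $g_{\sigma_n}^{-1}=\tfrac1q g_{\sigma_n}+\left(\tfrac1q-1\right)$, so that $R_n(t_{a_n})=g_{\sigma_n}g_{a_{n+1}}g_{\sigma_n}^{-1}$ expands, after using the relation $V(g_{\sigma_n},g_{a_{n+1}})=0$ of (\ref{definingrelations}) to eliminate $g_{\sigma_n}g_{a_{n+1}}g_{\sigma_n}$, as
\[
R_n(t_{a_n}) = -g_{\sigma_n}g_{a_{n+1}} - \tfrac1q\, g_{a_{n+1}}g_{\sigma_n} - \tfrac1q\, g_{\sigma_n} - \tfrac1q\, g_{a_{n+1}} - \tfrac1q .
\]
Since $g_{\sigma_n}g_{a_{n+1}}=g_{I(a_n)}$ and $g_{a_{n+1}}g_{\sigma_n}=g_{J(a_n)}$, this is already the asserted identity for $m=1$ (for the general base word $w=A_0\,a_n\,A_1$ one multiplies on the left by $g_{A_0}$ and on the right by $g_{A_1}$): the two leading terms are $(-1)^1g_{I(w)}$ and $(-\tfrac1q)^1g_{J(w)}$ by Theorem~\ref{IJ}, and the remaining three terms have strictly smaller length and affine length at most $1$.

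For the inductive step I would write $w=v\,a_n\,A_m$ with $A_m\in W^c(A_{n-1})$ and $L(v)=m-1$, so that $t_w=t_v\,t_{a_n}\,t_{A_m}$ and, $R_n$ being an algebra morphism (Proposition~\ref{morphismGn}), $R_n(t_w)=R_n(t_v)\,R_n(t_{a_n})\,g_{A_m}$. Substituting the induction hypothesis $R_n(t_v)=(-1)^{m-1}g_{I(v)}+(-\tfrac1q)^{m-1}g_{J(v)}+(\text{lower})$ together with the five–term expression above and expanding, the two leading contributions come from pairing $g_{I(v)}$ with $-g_{\sigma_n}g_{a_{n+1}}$ and pairing $g_{J(v)}$ with $-\tfrac1q g_{a_{n+1}}g_{\sigma_n}$. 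Because $I(w)=I(v)\,\sigma_n a_{n+1}\,A_m$ and $J(w)=J(v)\,a_{n+1}\sigma_n\,A_m$ are reduced fully commutative of length $l(w)+m$ (Theorem~\ref{IJ}), the basis multiplication rule turns these contributions into $(-1)^{m}g_{I(w)}$ and $(-\tfrac1q)^{m}g_{J(w)}$.

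The heart of the argument, and the step I expect to be the main obstacle, is to confine every remaining term to the error sum, i.e. to show it has length $<l(w)+m$ and affine length $\le m$. For the length bound, the only products that could reach the maximal length $l(w)+m$ pair $g_{I(v)}$ or $g_{J(v)}$ with one of the two length–two factors; besides the two matching pairings just used, the two \emph{cross} pairings produce the words $I(v)\,a_{n+1}\sigma_n\,A_m$ and $J(v)\,\sigma_n a_{n+1}\,A_m$. I would rule these out by an invariant: since $\sigma_n$ commutes with no power of $a_{n+1}$, and reduced expressions of a fully commutative element differ only by commutations, the number of occurrences of $\sigma_n$ strictly between two consecutive $a_{n+1}$ is the same in all reduced expressions; by the normal form (Theorem~\ref{FC}, Lemma~\ref{lemmafull}) this number equals $1$ for every internal gap. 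In the last internal gap the two cross words have, respectively, $0$ and $2$ copies of $\sigma_n$, so neither is reduced fully commutative of full length, and hence each $g$-product expands into strictly shorter basis elements. For the affine length I would use that $L$ is subadditive on the basis, i.e. $g_xg_y$ involves only $g_z$ with $L(z)\le L(x)+L(y)$; this follows from the stated multiplication rules together with the subword (Bruhat) triangularity of the Temperley--Lieb basis, so every term of $R_n(t_v)R_n(t_{a_n})g_{A_m}$ has affine length at most $(m-1)+1+0=m$.

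It remains to note that no lower combination can feed back into the coefficients of $g_{I(w)}$ or $g_{J(w)}$, since those basis elements have length exactly $l(w)+m$ while all other contributions are strictly shorter; moreover $I(w)\ne J(w)$ for $L(w)\ge1$, for a common value would lie in the image of $W^c(A_{n-1})$ by Theorem~\ref{IJ} and thus have affine length $0$. Hence the coefficients of $g_{I(w)}$ and $g_{J(w)}$ are exactly $(-1)^{m}$ and $(-\tfrac1q)^{m}$, and all other terms assemble into the asserted sum, completing the induction.
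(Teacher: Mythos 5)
Your proposal is correct and follows essentially the same route as the paper's own proof: induction on the affine length with the decomposition $w = v\,a_n\,A_m$, the same five-term expansion of $R_n(t_{a_n})=g_{\sigma_n}g_{a_{n+1}}g_{\sigma_n}^{-1}$ via the quadratic and $V$-relations, the leading terms identified through $I(w)=I(v)\sigma_n a_{n+1}A_m$ and $J(w)=J(v)a_{n+1}\sigma_n A_m$ from Theorem~\ref{IJ}, and the cross terms $I(v)a_{n+1}\sigma_n A_m$ and $J(v)\sigma_n a_{n+1}A_m$ eliminated by the ``exactly one $\sigma_n$ between consecutive $a_{n+1}$'s'' property of Lemma~\ref{lemmafull}. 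Your added remarks (explicit subadditivity of affine length under multiplication, and $I(w)\neq J(w)$ for $L(w)\ge 1$) are points the paper leaves implicit, but they do not change the argument.
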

\begin{proof} We prove the statement by induction on the affine length 
$L(w)$ of an  element  $w\in  W^c(\tilde A_{n-1})$. 
We first assume that  $L(w)=1$. We can write $w = u a_n v$  where 
$u$ and 
 $v$ belong  to $ W^c( A_{n-1})$. 
We compute: 
\begin{eqnarray}
 R_n  (t_{a_n})   	
&=& g_{\sigma_{n}} g_{a_{n+1}} g^{-1}_{\sigma_{n}}  
 =
  g_{\sigma_{n}} g_{a_{n+1}} \left( \frac{1}{q} g_{\sigma_{n}} + (\frac{1}{q}-1)  g_1  \right)   
  \nonumber\\   
&=&  
  \frac{1}{q} g_{\sigma_{n}} g_{a_{n+1}}   g_{\sigma_{n}} + (\frac{1}{q}-1) g_{\sigma_{n}} g_{a_{n+1}}      
 \nonumber\\   
&=&  
  - \frac{1}{q} g_{\sigma_{n}} g_{a_{n+1}}    
 -  \frac{1}{q}  g_{a_{n+1}}   g_{\sigma_{n}} - \frac{1}{q} g_{\sigma_{n}} -  \frac{1}{q}  g_{a_{n+1}}  
 -\frac{1}{q} g_1 
 + (\frac{1}{q}-1) g_{\sigma_{n}} g_{a_{n+1}}      
 \nonumber\\   
&=&  
  - g_{\sigma_{n}} g_{a_{n+1}}    
 -  \frac{1}{q}  g_{a_{n+1}}   g_{\sigma_{n}} - \frac{1}{q} g_{\sigma_{n}} -  \frac{1}{q}  g_{a_{n+1}}  
 -\frac{1}{q} g_1 
  . 
  \nonumber 	 
			\end{eqnarray}
Since $R_n$ is a  homomorphism of algebras we have  
$ R_n (t_w)  =  R_n(t_u)  R_n(t_{a_n})  R_n(t_v)  	 $ and, by  Proposition~\ref{morphismGn}, 
we have   $R_n(t_v)= g_{I(v)}= g_{J(v)}$. Furthermore,  multiplying   on the left 
the element $v\in  W^c( A_{n-1})$   
by any element of $\{ \sigma_{n }a_{n+1}, 
 {a_{n+1}}    {\sigma_{n}}, {\sigma_{n}}  ,  {a_{n+1}}    \}$ 
  produces a reduced fully commutative word, hence:  
\begin{eqnarray}
 R_n(t_{a_n})  R_n(t_v) &=&    
  - g_{\sigma_{n} a_{n+1}I(v)}    
 -  \frac{1}{q}  g_{a_{n+1} \sigma_{n}I(v)} - \frac{1}{q} g_{\sigma_{n}I(v)} -  \frac{1}{q}  g_{a_{n+1}I(v)}  
 -\frac{1}{q} g_{I(v)} . 
 \nonumber 
			\end{eqnarray}
Finally, since $u$ also belongs to $W^c( A_{n-1})$ 
we get, as claimed: 
\begin{eqnarray}
 R_n (t_w)  &=&    g_{I(u)}     \ 
 \left( - g_{\sigma_{n} a_{n+1}I(v)}    
 -  \frac{1}{q}  g_{a_{n+1} \sigma_{n}I(v)} - \frac{1}{q} g_{\sigma_{n}I(v)} -  \frac{1}{q}  g_{a_{n+1}I(v)}  
 -\frac{1}{q} g_{I(v)} 
   \right) 
 \nonumber \\
 &=& -g_{I(w)} -\frac{1}{q} g_{J(w)}  -\frac{1}{q} \sum_{\begin{smallmatrix} l(x)< l(I(w)) \cr L(x) \le 1 \end{smallmatrix}} \alpha_x g_x  \qquad (\alpha_x \in K). \nonumber 
			\end{eqnarray}

We now assume that the property holds for any $u$ of  positive affine length at most $k$. 
Any  $w \in  W^c(\tilde A_{n-1})$ with $L(w)= k+1$ can be written as 
$w = u a_n v$ where $u \in  W^c(\tilde A_{n-1})$,  $L(u)= k $,  
$v \in W^c( A_{n-1})$, and $l(w) = l(u)+ l( v) +1$. We have $ R_n (t_w)  =  R_n(t_u)  R_n(t_{a_n})  R_n(t_v)  	 $. Using our previous computation of    $ R_n(t_{a_n})  R_n(t_v)$ and the induction hypothesis we write: 
$$ \begin{aligned}
 R_n (t_w)   	 
  &=   \big((-1)^{L(u)}  g_{I(u)} + (-\frac{1}{q})^{L(u)}   g_{J(u)}  + \sum_{\begin{smallmatrix} l(x)< l(I(u)) \cr L(x) \le L(u) \end{smallmatrix}}  \alpha_x  g_x  \big)  \\
&\qquad \qquad \qquad 
 \big( - g_{\sigma_{n} a_{n+1}I(v)}    
 -  \frac{1}{q}  g_{a_{n+1} \sigma_{n}I(v)} - \frac{1}{q} g_{\sigma_{n}I(v)} -  \frac{1}{q}  g_{a_{n+1}I(v)}  
 -\frac{1}{q} g_{I(v)}
   \big). \end{aligned}
 $$
We know from  Theorem~\ref{IJ} that $I(w)= I(u) \sigma_n a_{n+1} I(v)$ and $J(w)= J(u) a_{n+1}\sigma_n  I(v)$
and that both are reduced fully commutative words, hence 
$g_{I(u)} g_{\sigma_{n}a_{n+1}I(v)} = g_{I(w)}$ and  
$g_{J(u)}g_{a_{n+1}  \sigma_{n}I(v)} = g_{J(w)}$. We obtain the two leading terms 
$$ (-1)^{L(w)}   g_{I(w)} +  (-\frac{1}{q})^{L(w)}   g_{J(w)} $$ 
 in the formula that we are looking for. \\

We now observe  that   the other terms in the development of the product above have affine length at most 
$L(w)$ and Coxeter length  at most 
$l(I(w))$. The  only terms that might have      length $l(I(w))$  come from $I(u)$ and  $J(u) $  in the first parenthesis, 
together with 	$\sigma_n a_{n+1} I(v)$ and   $a_{n+1}  \sigma_{n} I(v)$ 
   in the second.  The cases of $I(w)$ and $J(w)$ being settled, it remains  to prove  that $g_{I(u)} g_{a_{n+1}  \sigma_{n}I(v)}$ and 
$g_{J(u)} g_{\sigma_{n}a_{n+1}I(v)}$  are linear combinations of basis elements $g_x$ where the length of 
$x \in  W^c(\tilde A_{n})$  is  strictly less than $l(I(w))$.

Remember from Lemma~\ref{lemmafull} that between two consecutive appearances 
of $a_{n+1}$ we must see one and only one  occurrence of $\sigma_n$. So the word  
$I(u) a_{n+1}  \sigma_{n} I(v)$    either is not reduced,  hence of length strictly 
  less than $l(I(w))$, or is not fully commutative. In the latter case  it contains 
  a braid   so the corresponding product $g_{I(u)} g_{ a_{n+1}  \sigma_{n} I(v)}$ decomposes, in the Temperley Lieb algebra, 
 into a linear combination of elements $g_z$ with $l(z) < l(I(w))$. 
 Similarly  
 $J(u) \sigma_{n} a_{n+1}   I(v)$   has two occurrences of  $ \sigma_{n}$ 
 between the rightmost  occurrence of $a_{n+1}$ and the previous one on the left: it cannot be fully commutative reduced hence the product $g_{J(u)} g_{\sigma_{n}a_{n+1}I(v)}$  decomposes as before into terms of strictly smaller length.  
 The result follows. 
\end{proof} 

\medskip 

\begin{theorem}\label{F} 
The tower of affine Temperley-Lieb  algebras

\begin{eqnarray}
			\widehat{TL}_{1}(q) \stackrel{R_{1}}{\longrightarrow}  \widehat{TL}_{2}(q) \stackrel{R_{2}} {\longrightarrow}\widehat{TL}_{3}(q) \longrightarrow \dotsb  \longrightarrow \widehat{TL}_{n}(q) \stackrel{R_{n}} {\longrightarrow}\widehat{TL}_{n+1}(q) \longrightarrow \dotsb  \nonumber\\\nonumber
		\end{eqnarray}

	is a tower of faithful arrows. 	\\  
\end{theorem}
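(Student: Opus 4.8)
The plan is to prove that each arrow $R_n \colon \widehat{TL}_n(q) \to \widehat{TL}_{n+1}(q)$ is injective, the case $n=1$ being the trivial inclusion $K = \widehat{TL}_1(q) \hookrightarrow \widehat{TL}_2(q)$ as the subalgebra $K\cdot 1$. So fix $n \ge 2$ and work with the two canonical bases: $\{t_w : w \in W^c(\tilde A_{n-1})\}$ of $\widehat{TL}_n(q)$ and $\{g_x : x \in W^c(\tilde A_n)\}$ of $\widehat{TL}_{n+1}(q)$. I would suppose that $\sum_w \lambda_w t_w$ lies in $\ker R_n$ with not all $\lambda_w$ zero and derive a contradiction. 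Set $S = \{w : \lambda_w \ne 0\}$ and, using that $l(I(w)) = l(w) + L(w)$ by Theorem~\ref{IJ}, put $M = \max_{w \in S} l(I(w))$ and $S_M = \{w \in S : l(I(w)) = M\}$.

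The key input is the explicit shape of $R_n(t_w)$. For $w$ of affine length $0$, that is $w \in W^c(A_{n-1})$, Proposition~\ref{morphismGn} gives $R_n(t_w) = g_{I(w)} = g_{J(w)}$. For $w$ of positive affine length, Proposition~\ref{formula} gives
$$
R_n(t_w) = (-1)^{L(w)} g_{I(w)} + \Bigl(-\tfrac{1}{q}\Bigr)^{L(w)} g_{J(w)} + \sum_{\substack{l(x) < l(I(w)) \\ L(x) \le L(w)}} \alpha_x\, g_x .
$$
Since $l(I(w)) = l(J(w)) = l(w) + L(w)$, the two leading basis vectors $g_{I(w)}$ and $g_{J(w)}$ both realize the maximal length $l(I(w))$ inside $R_n(t_w)$, while every remaining $g_x$ has strictly smaller length. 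Consequently, in the vanishing sum $\sum_w \lambda_w R_n(t_w) = 0$, the only contributions to basis vectors of length exactly $M$ come from the leading terms $g_{I(w)}$ and $g_{J(w)}$ of the elements $w \in S_M$; the elements $w \in S \setminus S_M$ and all the $\alpha_x$-terms produce only basis vectors of length strictly below $M$.

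I would then isolate a single surviving coefficient. Pick any $w_0 \in S_M$ and track the coefficient of the length-$M$ vector $g_{I(w_0)}$. A priori it could receive contributions from a term $g_{I(w')}$ or $g_{J(w')}$ with $w' \in S_M$. The term $g_{I(w')}$ contributes only when $I(w') = I(w_0)$, and since $I$ is injective (Theorem~\ref{IJ}) this forces $w' = w_0$. The term $g_{J(w')}$ contributes only when $J(w') = I(w_0)$; here the decisive point of Theorem~\ref{IJ} enters, namely that $\operatorname{Im} I \cap \operatorname{Im} J = I(W^c(A_{n-1}))$. If $L(w') > 0$ then $w' \notin W^c(A_{n-1})$, so by injectivity of $J$ we have $J(w') \notin \operatorname{Im} I$ and hence $J(w') \ne I(w_0)$; if $L(w') = 0$ then $J(w') = I(w')$, so $J(w') = I(w_0)$ forces $w' = w_0$, in which case $I(w_0) = J(w_0)$ is genuinely a single leading vector. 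Thus the coefficient of $g_{I(w_0)}$ equals $\lambda_{w_0}$ times a unit of $K$ (it is $(-1)^{L(w_0)}$ when $L(w_0)>0$, or $1$ when $L(w_0)=0$, the two values agreeing precisely when $I(w_0)=J(w_0)$), and it must vanish. Hence $\lambda_{w_0} = 0$, contradicting $w_0 \in S$; therefore $S = \emptyset$ and $R_n$ is injective.

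The heart of the argument, and the step I expect to be the only real obstacle, is this last coefficient-tracking. Because $g_{I(w)}$ and $g_{J(w)}$ share the same length, a naive length filtration cannot separate the $I$-leading term from its $J$-companion, and a priori the $J$-term of one element could collide with the $I$-leading term of another. What rescues the triangularity is exactly the disjointness statement $\operatorname{Im} I \cap \operatorname{Im} J = I(W^c(A_{n-1}))$ of Theorem~\ref{IJ}, which guarantees that the $J$-companions of positive-affine-length elements never land on any $I$-image; combined with the injectivity of $I$ and $J$, this makes the coefficient of $g_{I(w_0)}$ clean and forces the vanishing from the top length downward.
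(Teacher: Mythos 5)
Your proposal is correct and follows essentially the same route as the paper: assume a nontrivial dependence relation, filter by the maximal value of $l(w)+L(w)=l(I(w))$, apply Proposition~\ref{formula} (and Proposition~\ref{morphismGn} for affine length $0$) to isolate the leading terms $g_{I(w)}$, $g_{J(w)}$, and kill the top coefficients using the injectivity of $I$ and $J$ together with $\operatorname{Im} I\cap\operatorname{Im} J=W^c(A_{n-1})$ from Theorem~\ref{IJ}. Your coefficient-tracking argument simply makes explicit the non-collision of basis vectors that the paper's proof asserts in one sentence.
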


\begin{proof} We need to show that $R_n$ is an injective homomorphism of algebras. 
A basis for $\widehat{TL}_{n}(q)$ is given by the elements $t_w$ where $w$ runs over 
$ W^c(\tilde A_{n-1})$. Assume that  there are non trivial dependence relations between the images of these basis elements. Pick one such relation, say 
$\sum_w \lambda_w R_n(t_w) =0 $, and let 
$k = \max \{ l(w) + L(w) \ | \  \lambda_w \ne 0\} $. 
Using Proposition~\ref{formula} we can write  this relation as follows:

$$  \sum_{\begin{smallmatrix} l(w) + L(w) = k  \\  L(w)>0
\end{smallmatrix} } \lambda_w (  (-1)^{L(w)}   g_{I(w)} +  (-\frac{1}{q})^{L(w)}   g_{J(w)} )
 +  \sum_{\begin{smallmatrix}  l(w)+ L(w) = k  \\  L(w)=0\end{smallmatrix} } \lambda_w  g_{I(w)} + 
 \sum_{l(x)<k}  \lambda'_x g_x  = 0 $$
 
 \medskip\noindent  
 for suitable coefficients $\lambda'_x$ (where the $x$'s are elements of  
 $W^c(\tilde A_{n}) $ and $l(x)$ is the length in $W (\tilde A_{n}) $). 
   Since the elements $g_y$ for 
 $y  \in  W^c(\tilde A_{n})$ form a basis of $\widehat{TL}_{n+1}(q)$, 
 and since $I$ and $J$ are injective and the intersection of their images  is 
 $   W^c(  A_{n-1})$, we see that all the coefficients $\lambda_w$ for 
 $l(w)+L(w)=k$ must be $0$, a contradiction. 
\end{proof}

	\medskip

\renewcommand{\refname}{REFERENCES}

\end{document}